\documentclass[a4paper,12pt]{article}
%\documentclass[a4paper,11pt]{article}
% Comment the following two lines to change the font style
%\usepackage[T1]{fontenc}
%\usepackage{lmodern}

\pagestyle{headings}
\usepackage{amsmath,amsthm,amssymb,enumerate}
\usepackage{xcolor}
\usepackage{float}
\usepackage{multirow}
\usepackage{longtable}
\usepackage{diagbox}
\usepackage{soul}
\usepackage{cancel}
%\usepackage{mathrsfs}
%For degree symbol
%\usepackage[affil-it]{authblk}
\usepackage{gensymb}
\usepackage{enumitem}
\usepackage[square,sort&compress,comma,numbers]{natbib}
\usepackage{bm}
\usepackage{hyperref}
\usepackage{subfig}
\usepackage{graphicx}
\usepackage[margin=1.7cm]{geometry}
\usepackage{tikz}
\usepackage{esint}
\usepackage{caption}
\usetikzlibrary{shapes,calc}
\usepackage{verbatim}
\usepackage{array}
\usepackage{bm}
\definecolor{maroon}{RGB}{144,0,32}
\newcolumntype{H}{>{\setbox0=\hbox\bgroup}c<{\egroup}@{}}

% This is the package that gives the font required. But, this will not work with pdflatex. You need to change the builder to xelatex.
\usepackage{newtxtext,newtxmath}

\usepackage{multirow}
\usepackage{marginnote}
%\usepackage[utf8]{inputenc}
%    Some definitions useful in producing this sort of documentation
\chardef\bslash=`\\ % p. 424, TeXbook
%    Normalized (nonbold, nonitalic) tt font, to avoid font
%    substitution warning messages if tt is used inside section
%    headings and other places where odd font combinations might
%    result.

%    command name

%    LaTeX package name

%    File name

%    environment name

\hfuzz1pc

\newtheorem{thm}{Theorem}[section]
\newtheorem{cor}[thm]{Corollary}
%[section]
\newtheorem{lem}[thm]{Lemma}

\theoremstyle{definition}
\theoremstyle{remark}

\numberwithin{equation}{section}

%%%%%%%%%%%%%%%%%%%%%%%%%%%%%% My Definition %%%%%%%%%%%%%%%%%%%%%%%%%%%%%%

\newcommand{\bI}{\boldsymbol{I}}

\newcommand{\cA}{\mathcal A}
\newcommand{\cK}{\mathcal K}

\newcommand{\cD}{\mathcal D}
\newcommand{\cE}{\mathcal E}

\newcommand{\cT}{\mathcal{T}}

\newcommand{\cN}{\mathcal N}

\newcommand{\cM}{{\mathcal M}}
\newcommand{\err}{{\rm err}}

\newcommand{\hc}{\widehat{c}}

\newcommand{\divc}{\mathrm{div}}
\newcommand{\rot}{\mathrm{rot\,}}
\newcommand{\dof}{\mathrm{dof}}

\newcommand{\pw}{\rm pw}

\newcommand{\fl}{\,\, \text{for all}\:}

\newcommand{\half}{\frac{1}{2}}

\newcommand{\dx}{{\rm\,dx}}
\newcommand{\ds}{{\rm\,ds}}

\newcommand{\Holder}{H\"{o}lder~}

%%%%%%%%%%%%% Pour les corrections entre versions

%
{\algorithm}%
{\endalgorithm}% Figure/float layout

%

%    command name

%    LaTeX package name
%    File name
%    environment name
\hfuzz1pc % Don't bother to report overfull boxes if overage is < 1pc

%       Theorem environments

%% \theoremstyle{plain} %% This is the default

\theoremstyle{definition}

\numberwithin{equation}{section}

%%%% Definitions %%%%%%
\newcommand{\bV}{\text{\bf V}}

\newcommand{\bv}{\boldsymbol{v}}

%\eqref{eq:estEff}}}
 % estimator reduction %ehemals \Lamda bzw. C_\delta
 % 
 % 

%\newcommand{\Stb}{\mathbb{P}_2(\cT)}

%\newcommand{\sit}{\sum_{T\in\mathcal{T}_h}\int\limits_T}	

%\newcommand{\trinl}{\ensuremath{\left|\!\left|\!\left|}}
%\newcommand{\trinr}{\ensuremath{\right|\!\right|\!\right|}}

%\newcommand{\cof}{{\rm cof}}

%\numberwithin{equation}{section}

\newcommand{\N}{\mathbb{N}}

 % alle kanten
	     % innere Kanten
 % Aussenkanten

\newcommand{\T}{\mathcal{T}}
\renewcommand{\P}{\mathcal{P}}

 % natural numbers

% trees, forests
 %forest
 %leaves
 %leaves
 %interior vertices

\newcommand{\Holders}{H\"{o}lder's }

\newcommand{\balpha}{{\boldsymbol{\alpha}}}
\newcommand{\bbeta}{{\boldsymbol{\beta}}}

\newcommand{\bx}{{\boldsymbol{x}}}
\newcommand{\bn}{{\boldsymbol{n}}}
\newcommand{\bu}{{\boldsymbol{u}}}
\newcommand{\bp}{{\boldsymbol{p}}}
\newcommand{\bsf}{{\boldsymbol{f}}}
\newcommand{\bPi}{{\boldsymbol{\Pi}}}
\newcommand{\bpsi}{{\boldsymbol{\psi}}}

\def \R{{{\Bbb R}}}
\def \P{{{\mathcal P}}}

\allowdisplaybreaks

\def\R{\mathbb{R}}

\def\cA{\mathcal{A}}

\def\O{\Omega}

\def\bv{{\boldsymbol v}}

\def\pw{\rm {pw}}

\def\cE{{\mathcal{E}}}

\def\jump#1{\left[\hskip -3.5pt\left[#1\right]\hskip -3.5pt\right]}
\def\bchi{\boldsymbol{\chi}}

\def\bg{\boldsymbol \gamma}

\newcommand{\be}{\begin{equation}}
\newcommand{\ee}{\end{equation}}

\usepackage{mathtools}  
\mathtoolsset{showonlyrefs} 
\usepackage[normalem]{ulem}
\normalem
\definecolor{violet}{rgb}{0.580,0.,0.827}
%\def\corr#1#2#3{\typeout{Warning : a correction remains in page \thepage}
%        \textcolor{blue}{\sout{#1}}
%        \textcolor{violet}{#2}
%        \textcolor{red}{REMARQUE REDACTION: #3}}
%        \textcolor{red}{#3}}
%        \usepackage{cancel}
%%%%%%%%%%%%%

\usepackage[normalem]{ulem}
\normalem
\newcounter{corr}
\definecolor{violet}{rgb}{0.580,0.,0.827}
\newcommand{\corr}[3]{\typeout{Warning : a correction remains in page
		\thepage}
	\stepcounter{corr}        
	{\color{red}\ifmmode\text{\,{\ensuremath{#1}}\,}\else{#1}\fi}
	{\color{blue}#2}
	{\color{violet} #3}}

\newcounter{changeto}
\newcommand{\changeto}[2]{\typeout{Warning : a correction remains in page
		\thepage}
	\stepcounter{changeto}        
	{\color{blue}\ifmmode\text{\,\sout{\ensuremath{#1}}\,}\else\sout{#1}\fi}
	{\color{red}#2}}

\parindent=0pt

% norm definitions

\title{Convergence analysis of a nonconforming virtual element method for compressible miscible displacement problems in porous media}
\author{Sarvesh Kumar \footnote{Department of Mathematics, Indian Institute of Space Science and Technology, Thiruvanathapuram 695547, India. sarvesh@iist.ac.in} \quad Devika Shylaja \footnote{Department of Mathematics, Indian Institute of Space Science and Technology, Thiruvanathapuram 695547, India. devikas.pdf@iist.ac.in}}

\date{}
\begin{document}
	\maketitle
	
\abstract This article presents a priori error estimates of the miscible displacement of one compressible fluid by another in a porous medium. The study utilizes the $H(\divc)$ conforming virtual element method (VEM) for the approximation of the velocity, while a non-conforming virtual element approach is employed for the concentration. The pressure is discretised using the standard piecewise discontinuous polynomial functions. These spatial discretization techniques are combined with a backward Euler difference scheme for time discretization. Error estimates are established for velocity, pressure and concentration. The article also includes numerical results that validate the theoretical estimates.%These spatial discretizations are then combined with time discretization using a backward Euler difference scheme. Numerical results that justify theoretical estimates are presented.

%\medskip
%\noindent \textbf{Mathematics subject classification:} 

\medskip 
\noindent \textbf{Keywords:} Miscible fluid flow, compressible miscible displacement, convergence analysis, virtual element methods

	\section{Introduction}
This paper analyses the virtual element method for the miscible displacement of one compressible fluid by another in a porous medium which is described by nonlinear coupled system of two parabolic partial differential equations. Let $\Omega\subset \R^2$ be a polygonal bounded, convex domain, describing a reservoir of unit thickness. Given a time interval $J := [0,T],$ for $T > 0$, the problem is to find the Darcy velocity $\bu=\bu(\bx,t)$ of the fluid mixture, the pressure $p = p(\bx,t)$ in the fluid mixture, and the concentration $c=c(\bx,t)$ of one of the components in the mixture, with $(\bx,t)=\Omega_T:=\Omega \times J$ such that \cite{DJR_Compressible1983,ERW_Compressible1984}
\begin{subequations}\label{eqn.model}
\begin{align}
&d(c)\frac{\partial p}{\partial t}+\nabla \cdot\bu=q,\quad \bu=-a(c)\nabla p,\label{eqn.modela}\\
&\phi\frac{\partial c}{\partial t}+b(c)\frac{\partial p}{\partial t}+\bu\cdot \nabla c -\divc(D(\bu)\nabla c)=q(\hc-c),\label{eqn.modelb}
\end{align}
\end{subequations}\noeqref{eqn.modela,eqn.modelb}
where $\phi=\phi(\bx)$ is the porosity of the medium and $a(c)=a(c,\bx)$ is the scalar-valued function given by
\[a(c):=\frac{k}{\mu(c)}.\]
Here, $k=k(\bx)$ represents the permeability of the porous rock, and $\mu(c)$ is the viscosity of the fluid mixture. Further, $q(\bx,t)$ is a sum of sources (injection) and sinks (extraction), and $\hc=\hc(\bx,t)$ is the concentration of the external flow, which is specified at sources and is assumed to be equal to $c(\bx,t)$ at sinks. We assume that the flow rate $q$ is smoothly distributed so that the problem has smooth solutions. Moreover, the diffusion dispersion tensor $D(\bu) \in \R^{2 \times 2}$ is given by
\begin{equation}\label{defn.D}
D(\bu):=\phi[d_m\bI +|u|\left(d_\ell E(\bu)+d_t(\bI-E(\bu))\right)]
\end{equation}
where $d_m$ is the molecular diffusion coefficient, $d_\ell$ (resp. $d_t$) is the longitudinal (resp. transversal) dispersion coefficient, $\bI$ is the identity matrix of order 2, and $E(\bu)$ is the tensor that projects onto $\bu$ direction which is given by, for $\bu=(u_1,u_2)$,
\begin{align*}
E(\bu)&:=\frac{\bu\bu^T}{|\bu|^2},\quad |\bu|^2=u_1^2+u_2^2.%, \quad E^\perp(\bu):=\bI-E(\bu),
\end{align*}
 %Further $\bg(c)$ in \eqref{eqn.model} describes the force density due to gravity (typically written as $\bg(c)=\gamma_o(c)\rho$ with $\gamma_0(c)$ being the density of the fluid and $\rho$ the gravitational acceleration vector), and $a(c)=a(c,x)$ is the scalar-valued function given by
%\[a(c):=\frac{k}{\mu(c)},\]
%where $k=k(\bx)$ represents the permeability of the porous rock, and $\mu(c)$ is the viscosity of the fluid mixture.
The coefficients $d(c)$ and $b(c)$ are defined as
\begin{equation}\label{eqn.db}
d(c)=\phi\sum_{i=1}^2 z_ic_i, \quad b(c)=\phi c_1\displaystyle \left(z_1-\sum_{i=1}^2 z_ic_i\right),
\end{equation}
where $c=c_1=1-c_2$ , $c_i$ denotes the (volumetric) concentration of the $i$th component of the fluid mixture and $z_i$ is the constant compressibility factor for the $i$th component, $i=1,2$.

\medskip

\noindent Assume that no flow occurs across the boundary $\partial \Omega$, that is,
\begin{align}
&\bu\cdot \bn=0, \quad D(\bu)\nabla c\cdot \bn=0 \quad \mbox{ on } \partial \Omega \times J,\label{eqn.bc}
\end{align}
where $\bn$ denotes the outward unit normal to the boundary $\partial \Omega$ and the initial conditions
\begin{equation}\label{eqn.ic}
c(\bx,0)=c_0(\bx), \quad p(\bx,0)=p_0(\bx) \mbox{ in } \Omega.
\end{equation}

\smallskip

\noindent The study of miscible displacement is crucial for optimizing various industrial processes, such as enhanced oil recovery in the petroleum industry, seawater intrusion or contaminant transport in environmental remediation \cite{RussellWheeler,peaceman-77}. In \cite{DJR_Compressible1983}, Galerkin methods and mixed finite element methods were analyzed for the approximation of the compressible miscible displacement problem \eqref{eqn.model} with continuous time. Bound-preserving discontinuous Galerkin methods were explored in \cite{compressible_dG_2017,compressible_dG_2022}. A characteristic block-centered finite-difference method for this problem was derived in \cite{compressible_characteristics_2022,compressible_characteristics_2017}. Furthermore, a mixed-finite-element method combined with a mass-conservative characteristic finite method was proposed in \cite{compressible_characteristicsmixedfem_2019,compressiblemiscible_Zhang_2020} to address this challenge. Moreover, a mixed finite-element method, along with a two-grid method, was demonstrated in \cite{compressible_twogrid_2018,compressible_twogrid_2022} for solving the compressible miscible displacement problem.
\smallskip

\noindent Virtual element method (VEM) \cite{Veiga_basicVEM}, which is a generalization of the FEM,  has got more and more attention in recent years, because it can deal with the polygonal meshes and avoid an explicit construction of the discrete shape function, see \cite{Veiga_HdivHcurlVEM,Veiga_hitchhikersVEM,Brenner_errorVEM,VEM_general_2016,MixedVEM_general_2016,Cangiani_2017} and the references therein. The polytopal meshes can be very useful for a wide range of reasons, including meshing of the domain (such as cracks) and data features, automatic use of hanging nodes, adaptivity. Recently, virtual element methods for incompressible miscible displacement problem have been investigated in \cite{Veiga_miscibledisplacement_2021,SD_ncVEM_incompressible_2024}. In \cite{Veiga_miscibledisplacement_2021} (resp. \cite{SD_ncVEM_incompressible_2024}), conforming (resp. non-conforming) VEM is analysed for the concentration and mixed VEM for the velocity-pressure equations.

\smallskip

\noindent This paper employs the H(div) conforming VEM for approximation of the velocity, while the concentration is handled using a non-conforming virtual element approach. To discretize the pressure, standard piecewise discontinuous polynomial functions are used. These spatial discretizations are then combined with an uncomplicated time discretization using a backward Euler method, known for its computational efficiency. Optimal a priori error estimates are established for the concentration, pressure, and velocity in $L^2$ norm under regularity assumption on the exact solution. Numerical results are presented to justify the theoretical estimates.

\smallskip

\noindent The remaining parts are organised as follows. Section \ref{sec:wf} discusses the weak formulation of \eqref{eqn.model}-\eqref{eqn.ic}. The main result of this paper (Theorem~\ref{thm.main}) is stated at the end of this section. Section~\ref{sec:vem} deals with the virtual element method, semi-discrete and fully-discrete formulations. Error estimates for velocity, pressure and concentration are established in Section~\ref{sec:errors}. Section~\ref{sec:numericalresults} provides the results of computational experiments that validate the theoretical estimates on both an ideal test case and a more realistic test case.

\smallskip

\noindent {\textbf{Notation.}} The standard $L^2$ inner product and norm on $L^2(\O)$ are denoted by $(\cdot,\cdot)$ and $\|{\cdot}\|$. The semi-norm and norm in $W^{k,p}(D)$,  for $D \subseteq \Omega$ and $1 \le p \le \infty$, are denoted by $|\bullet|_{k,p,D}$ and $\|\bullet\|_{k,p,D}$. For $p=2$,  the semi-norm and norm are denoted by $|\bullet|_{k,D}$ and $\|\bullet\|_{k,D}$. Let $\P_k(D)$ denotes the space of polynomials of degree at most $k$ ($k \in \N_0$) with the usual convention that $\P_{-1}(D)=0$.

\smallskip

 \noindent Let $H(\divc,\Omega) $ denotes the Sobolev space
\[H(\divc,\Omega):=\{\bv \in (L^2(\Omega))^2: \divc \bv \in L^2(\O)\}.\] Define the velocity space $\bV$, the pressure space $Q$, and the concentration space $Z$, equipped with the following norms by
\begin{align}\label{defn.spaces}
	&\bV:=\{\bv \in H(\divc,\Omega):\bv \cdot \bn=0 \mbox{ on }\partial \Omega\},\quad \|\bu\|_\bV^2:=\|\bu\|^2+\|\divc \bu\|^2\\
	&Q:=L^2(\Omega), \quad \|q\|_Q^2:=\|q\|^2\\
	&Z:=H^1(\Omega), \quad \|z\|_Z^2:=\|z\|^2+\|\nabla z\|^2.
\end{align}
%%where
%%\[H(\divc,\Omega):=\{\bv \in (L^2(\Omega))^2: \divc \bv \in L^2(\O)\}.\]
%These spaces
%\[\|\bu\|_\bV^2:=\|\bu\|^2+\|\divc \bu\|^2, \quad \|q\|_Q^2:=\|q\|^2, \quad \|z\|_Z^2:=\|z\|^2+\|\nabla z\|^2.\]
For $0 \le a \le b$,
\[\|\bv\|_{L^2(a,b;\bV)}^2:=\int_a^b\|\bv(t)\|_\bV^2 \dx, \quad \|\bv\|_{L^\infty(a,b;\bV)}:=\mbox{ess} \sup_{t \in [a,b]}\|\bv(t)\|_\bV.\]
%analogously for $p$ and $c$.
%We use the standard notation given below.
%\begin{equation}
%	\bu(t)(\bx):=\bu(\bx,t),\quad p(t)(\bx):=p(\bx,t), \quad c(t)(\bx):=c(\bx,t).
%\end{equation}

\noindent For simplicity of notation, we denote $W^{m,p}(0,T;W^{r,q}(\Omega))$ by $W^{m,p}(W^{r,q})$. The regularity assumptions of the solution functions $c,p$ and $\bu$ are denoted collectively by% \cite{ERW_Compressible1984}
{\[(R)\begin{cases}
	&c \in L^\infty(H^{k+2})\cap H^1(H^{k+2})\cap L^\infty(W^{k+1,\infty})\cap H^2(L^2),\\
	&p \in L^\infty(H^{k+1})\cap H^2(H^{k+1}) \cap H^3(L^2),\\
	&\bu \in L^\infty(H^{k+1})\cap L^\infty(W^{1,\infty})\cap W^{1,\infty}(L^\infty)\cap H^2(L^2),
\end{cases}\]}
where $k\ge 0$ is the order of accuracy of $p$. 

\smallskip

\noindent For all $s>0$, define the broken Sobolev space as
\[H^s(\cT_h):=\{v \in L^2(\O);\,v_{|K} \in H^s(K) \fl K \in \cT_h\},\]
with the corresponding broken semi-norms and norms
\[|v|_{s,\cT_h}^2:=\sum_{K \in \cT_h}|v|_{s,K}^2, \quad \|v\|_{s,\cT_h}^2:=\sum_{K \in \cT_h}\|v\|_{s,K}^2.\]
\section{Weak formulation}\label{sec:wf}
This section deals with the weak formulation of the continuous problem \eqref{eqn.model}-\eqref{eqn.ic} and the properties of the associated bilinear forms.

\medskip

\noindent Assume that
\begin{align*}
	&a_* \le a(x,c) \le a^*, \, b_* \le b(x,c) \le b^*,\, d_* \le d(x,c) \le d^*,\\
	&\phi_* \le \phi(x) \le \phi^*,\, D_* \le D(\bu) \le D^*,\\
	&\left|\frac{\partial a}{\partial c}(x,c)\right|+\left|\frac{\partial^2 a}{\partial c^2}(x,c)\right|+\left|\frac{\partial b}{\partial c}(x,c)\right|+\left|\frac{\partial d}{\partial c}(x,c)\right| \le K_1,
\end{align*}
where $a_*,a^*,b_*,b^*,d_*,d^*,\phi_*,\phi^*, D_*,D^*$, and $K_1$ are positive constants. In order to simplify the presentation, we define
\[A(z)(\bx):=a^{-1}(z,\bx).\]
Additionally, assume the realistic relation of the diffusion and dispersion coefficients given by $0 <d_m \le d_t\le d_\ell.$

\medskip

\noindent The weak formulation of \eqref{eqn.model}-\eqref{eqn.ic} seeeks $c \in L^2(Z)\cap C^0(L^2)$, $\bu \in L^2(\bV)$, and $p \in L^2(Q)\cap C^0(L^2)$, such that
\begin{subequations}\label{eqn.weak}
\begin{align}
\cA(c(t);\bu(t),\bv)+B(\bv,p(t))&=0, \fl \bv \in \bV \label{eqn.weakv}\\
W\left(c(t);\frac{\partial p(t)}{\partial t},w\right)-B(\bu(t),w)&=(q(t),w), \fl w \in Q \label{eqn.weakq}\\
\cM\left(\frac{\partial c(t)}{\partial t},z\right)+K\left(c(t);\frac{\partial p(t)}{\partial t},z\right)+&\Theta(\bu(t),c(t);z)+\cD(\bu(t);c(t),z)+(qc(t),z)\nonumber \\
&=(q(\hc)(t),z), \fl z \in Z\label{eqn.weakz}
\end{align}
\end{subequations}
for almost all $t \in J$ and with initial conditions $c(0)=c_0$ and $p(0)=p_0$, where
\begin{subequations}\label{defn.bilinear}
\begin{align}
& \cA(c;\bu,\bv):=(A(c)\bu,\bv), \quad  B(\bv,w):=-(\divc \bv,w),\label{defn.bilinear.a}\\
&W(c;p,w)=(d(c)p,w),\quad \cM(c,z):=(\phi c,z), \quad  K(c;p,z)=(b(c)p,z),\label{defn.bilinear.b}\\
& \Theta(\bu,c;z)=(\bu\cdot \nabla c,z),\quad \cD(u;c,z):=(D(\bu)\nabla c, \nabla z).\label{defn.bilinear.c}
\end{align}
%% An integration by parts and $\bu \in \bV$ leads to
%%\[(\bu\cdot\nabla c,z)_{}=\half[(\bu\cdot\nabla c,z)-(\nabla \cdot \bu, c z)-(c,\bu\cdot\nabla z)].\]
%Here and throughout this paper, we consider the following form for $\Theta(\bullet,\bullet;\bullet)$ which can be obtained from an integration by parts and $\bu \in \bV$.
%\[\Theta(\bu,c;z)=\half[(\bu\cdot\nabla c,z)-(\divc \bu, c z)-(c,\bu\cdot\nabla z)].\]
\end{subequations}\noeqref{defn.bilinear.a,defn.bilinear.b,defn.bilinear.c}
For the sake of readability, here and throughout this paper, we write $\bu$ for $\bu(t)$ and for the other functions depending on space and time. The interpretation of whether $\bu$ represents a function of space only or a function of both space and time should be inferred from the surrounding context. %The function $\bu$ represents as a function of space only or as a function of both space and time should be understood from the context. 

\medskip

\noindent The kernel is defined as
\begin{equation}\label{defn.kernel}
\cK:=\{\bv \in \bV:B(\bv,w)=0 \fl w \in Q\}.
\end{equation}

\begin{lem}[Properties of the bilinear forms]\label{lem.propertiescontinuous}The following properties hold for the bilinear forms in \eqref{defn.bilinear}\cite[Section 2.2]{Veiga_miscibledisplacement_2021}:
	\begin{itemize}
		\item[$(a)$] $\cM(c,z) \le \phi^*\|c\|\|z\|$ for all $c,z \in Z$,
		\item[(b)] $\cM(z,z) \ge \phi_*\|z\|^2$ for all $z \in Z$,
		\item[(c)] $\cA(c;\bu,\bv) \le \frac{1}{a_*}\|\bu\|\|\bv\|$ for all $c \in L^\infty(\Omega)$ and $\bu,\bv \in (L^2(\O))^2$,
		\item[(d)] $\cA(c;\bv,\bv) \le \|A(c)\|\|\bu\|_{0,\infty,\O}\|\bv\|$ for all $ c\in L^2(\O)$, $\bu \in (L^\infty(\O))^2$, and $\bv \in (L^2(\O))^2$,
		\item[(e)] $\cA(c;\bv,\bv) \ge \frac{1}{a^*}\|\bv\|^2$ for all $ c \in L^\infty(\O)$ and $\bv \in (L^2(\O))^2$,
		\item[(f)] $\cA(c,\bv,\bv) \ge \frac{1}{a^*}\|\bv\|_\bV^2$ for all $ c \in L^\infty(\O)$ and $\bv \in \cK$,
		\item[(g)] $\cD(\bu;c,z) \le \phi^*[d_m+\|u\|_{0,\infty,\O}(d_\ell+d_t)]\|\nabla c\|\|\nabla z\|$ for all $\bu \in (L^\infty(\O))^2$ and $c,z \in H^1(\O)$,
		\item[(h)] $\cD(\bu;c,z) \le\eta_\cD(1+\|u\|)\|\nabla c\|_{0,\infty,\O}\|\nabla z\|$ for all $\bu \in (L^2(\O))^2$ and $c,z \in H^1(\O)$ with $\nabla c \in L^\infty(\O)$,
		\item[(i)] $(D\bu,\boldsymbol{\mu},\boldsymbol{\mu}) \ge \phi_*(d_m\|\boldsymbol{\mu}\|^2+d_t\||u|^\half\boldsymbol{\mu}\|^2)$ for all $\boldsymbol{\mu} \in (L^2(\O))^2$,
	\end{itemize}
where $\eta_\cD$ is a positive constant depending only on $d_m$, $d_\ell$, and $d_t$.
\end{lem}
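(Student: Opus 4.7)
The plan is to verify items (a)--(i) one by one; each is a direct consequence of the pointwise bounds collected just before the statement together with Cauchy--Schwarz/H\"older inequalities, and the only nontrivial structural argument is needed for the coercivity~(i) of $\cD$.

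Items (a)--(c): start from the explicit definitions $\cM(c,z)=(\phi c,z)$ and $\cA(c;\bu,\bv)=(A(c)\bu,\bv)$, push the absolute value inside the integrals, use $\phi(\bx)\le\phi^*$ and $A(c)=a^{-1}(c)\le 1/a_*$ pointwise, and conclude by Cauchy--Schwarz. Item~(d) is analogous but one keeps $A(c)$ in an $L^2$ norm and pulls $\bu$ out in $L^\infty$, using $|(A(c)\bu,\bv)|\le \|A(c)\|\,\|\bu\|_{0,\infty,\Omega}\,\|\bv\|$. Items (b) and (e) are the same estimates turned around, using $\phi\ge\phi_*$ and $A(c)\ge 1/a^*$ to obtain coercivity in $\|\cdot\|$. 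Item~(f) then follows from~(e) together with the definition of the kernel $\cK$ in~\eqref{defn.kernel}: for $\bv\in\cK$, the relation $B(\bv,w)=-(\divc\bv,w)=0$ for all $w\in Q$ forces $\divc\bv=0$, so $\|\bv\|_\bV^2=\|\bv\|^2$ and (e) applies verbatim.

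For the continuity estimates (g) and (h) of $\cD$, I would expand $D(\bu)\nabla c\cdot\nabla z$ using~\eqref{defn.D} and note that the pointwise operator norm of $E(\bu)$ and of $\bI-E(\bu)$ is bounded by $1$, giving
\begin{equation*}
|D(\bu)\nabla c\cdot\nabla z|\le \phi^*\bigl[d_m+|\bu|(d_\ell+d_t)\bigr]\,|\nabla c|\,|\nabla z|.
\end{equation*}
Putting $\bu$ in $L^\infty$ and $\nabla c,\nabla z$ in $L^2$ yields (g); for (h), instead put $\nabla c$ in $L^\infty$, $\bu$ in $L^2$, and use Cauchy--Schwarz on the remaining factor to extract $(1+\|\bu\|)\,\|\nabla c\|_{0,\infty,\Omega}\,\|\nabla z\|$, absorbing the constants $d_m,d_\ell,d_t$ into $\eta_\cD$.

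The main obstacle, and the one step that actually uses the geometric structure of $D(\bu)$, is (i). The key observation is that $E(\bu)=\bu\bu^{T}/|\bu|^2$ is an orthogonal projector onto $\mathrm{span}(\bu)$, so for every $\boldsymbol\mu\in\R^2$ both $(E(\bu)\boldsymbol\mu)\cdot\boldsymbol\mu=|\bu\cdot\boldsymbol\mu|^2/|\bu|^2\ge 0$ and $((\bI-E(\bu))\boldsymbol\mu)\cdot\boldsymbol\mu=|\boldsymbol\mu|^2-|\bu\cdot\boldsymbol\mu|^2/|\bu|^2\ge 0$, and their sum is $|\boldsymbol\mu|^2$. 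Using the assumption $d_t\le d_\ell$ one can bound pointwise
\begin{equation*}
d_\ell\,(E(\bu)\boldsymbol\mu)\cdot\boldsymbol\mu+d_t\,((\bI-E(\bu))\boldsymbol\mu)\cdot\boldsymbol\mu\ \ge\ d_t\bigl[(E(\bu)\boldsymbol\mu)\cdot\boldsymbol\mu+((\bI-E(\bu))\boldsymbol\mu)\cdot\boldsymbol\mu\bigr]=d_t|\boldsymbol\mu|^2.
\end{equation*}
Inserting this into the definition of $D(\bu)$, integrating, and using $\phi\ge\phi_*$ gives $(D(\bu)\boldsymbol\mu,\boldsymbol\mu)\ge \phi_*\bigl(d_m\|\boldsymbol\mu\|^2+d_t\bigl\||\bu|^{1/2}\boldsymbol\mu\bigr\|^2\bigr)$, which is~(i). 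All other estimates are routine once the pointwise bounds on $a,b,d,\phi,D$ stated at the beginning of Section~\ref{sec:wf} are in force.
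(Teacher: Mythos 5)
Your verification is correct and is exactly the routine argument the paper relies on: the paper gives no proof of this lemma but cites the corresponding section of the reference, where the same pointwise bounds ($\phi_*\le\phi\le\phi^*$, $1/a^*\le A(c)\le 1/a_*$), Cauchy--Schwarz/H\"older, the observation that $\bv\in\cK$ has $\divc\bv=0$ so $\|\bv\|_\bV=\|\bv\|$, and the spectral bound $d_\ell E(\bu)+d_t(\bI-E(\bu))\ge d_t\bI$ (using $d_t\le d_\ell$ and that $E(\bu)$ is an orthogonal projector) are used for items (a)--(i). Your treatment of (i) via the projector decomposition is the key point and matches the standard proof; no gaps.
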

\noindent The definition of $W(c;\bullet,\bullet)$, the boundedness of $d(c)$, and the Cauchy-Schwarz inequality shows, for all $p,w \in Q$,
\[W(c;p,w) \le d^*\|p\|\|w\| \,\mbox{ and }\, W(c;w,w) \ge d_*\|w\|^2.\]
Analogously,
\[K(c;p,z) \le b^*\|p\|\|z\|\, \mbox{ and }\, K(c;z,z) \ge b_*\|z\|^2.\] 
\subsection{Main result}
The main result of this paper is briefly presented below. 

\medskip

\noindent  Let $0=t_0<t_1<\cdots<t_N=T$ be a given partition of $J=[0,T]$ with time step size $\tau$ and let $h$ be the mesh-size. Let $(\bu,p,c) $ solves \eqref{eqn.weak} at time $t=t_n$. For $k\ge 0$, let $\bu_h$ be the $H(\divc)$ conforming VEM approximation to $\bu$ of order $k$, $p_h$ be the polynomial approximation to $p$ of order $k$, and $c_h$ be the non-conforming VEM approximation to $c$ of order $k+1$. Suppose the space and time discretisation satisfy $\tau=\mathcal{O}(h^{k+1})$. 

\begin{thm}\label{thm.main}
		Under the regularity assumptions $(R)$, there exists a constant $C>0$, independent of $h$ and $\tau$, such that the following hold: for $n=1,\cdots,N,$
	\begin{align}
		&	\max_{n}\|\bu(t_n)-\bu_h(t_n)\|+\max_n \|p(t_n)-p_h(t_n)\|+\max_n \|c(t_n)-c_h(t_n)\| \le C(h^{k+1}+\tau).%\\
		%&	\max_n\|c^n-c_h^n\|_{1,\tau_h} \le C(h^{k+1}+\tau).
	\end{align}
\end{thm}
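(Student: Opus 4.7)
The plan is to adopt the classical three-step strategy for coupled parabolic problems: introduce suitable projection operators into each discrete space, split each error into an approximation part (controlled by interpolation estimates) and a discrete error part, and then derive error equations from the difference between the fully-discrete scheme and the weak formulation \eqref{eqn.weak} evaluated at $t=t_n$. Concretely, let $\bPi_h^{\divc}\bu$ be the $H(\divc)$-conforming VEM interpolant satisfying the commuting diagram property, let $\Pi_h^0 p$ denote the piecewise $L^2$-projection of $p$ onto $\mathcal{P}_k(\cT_h)$, and let $I_h^{\mathrm{nc}} c$ be the non-conforming VEM interpolant of $c$. Write
\begin{align*}
\bu(t_n)-\bu_h^n &= \underbrace{(\bu(t_n)-\bPi_h^{\divc}\bu(t_n))}_{=:\brho_{\bu}^n} + \underbrace{(\bPi_h^{\divc}\bu(t_n)-\bu_h^n)}_{=:\bxi_{\bu}^n},
\end{align*}
and analogously $\brho_p^n,\bxi_p^n$ and $\brho_c^n,\bxi_c^n$. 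The approximation parts $\brho_\bullet^n$ are bounded by $\mathcal{O}(h^{k+1})$ thanks to the standard VEM approximation estimates, and we only need to control the discrete parts $\bxi_\bullet^n$.

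For the velocity-pressure subsystem I would test the error equation with $(\bxi_\bu^n,\bxi_p^n)$; the coercivity of $\cA$ on the discrete kernel (item (f) of Lemma 2.1) together with the inf-sup stability of the $H(\divc)$-VEM pair yields control of $\|\bxi_\bu^n\|_\bV$ and $\|\bxi_p^n\|$ provided the nonlinear coefficient perturbations $(A(c^n)-A(c_h^n))\bu_h^n$ are treated as a data term. Since $A$ is Lipschitz in $c$ by the assumptions at the start of Section 2, this perturbation is bounded pointwise by $C\|c(t_n)-c_h^n\|$, and the discrete time derivative $\partial_\tau \xi_p^n$ appearing through $W(c;\partial_t p,\cdot)$ is handled via a backward-Euler consistency estimate $\|\partial_\tau p(t_n)-\partial_t p(t_n)\|\le C\tau\,\|\partial_{tt}p\|_{L^2(t_{n-1},t_n;L^2)}$. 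The outcome is a recursive bound of the form
\begin{equation*}
\|\bxi_\bu^n\|^2+\|\bxi_p^n\|^2 \le C\bigl(h^{2(k+1)}+\tau^2 + \|\xi_c^n\|^2\bigr).
\end{equation*}

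For the concentration, I would test the non-conforming error equation with $\xi_c^n$ and use the discrete coercivity of $\cD$ coming from item (i) of Lemma 2.1. The dispersion nonlinearity $D(\bu)$ is the most delicate term: since $D$ is only Lipschitz on bounded sets, I would proceed by induction on $n$, assuming $\|\bu_h^m\|_{L^\infty}\le C$ for $m<n$ and proving the same bound at step $n$ from the error estimate and an inverse inequality applied to $\bxi_\bu^n$, under the CFL-like scaling $\tau=\mathcal{O}(h^{k+1})$ stipulated in the theorem. The convective term $\Theta(\bu,c;\cdot)$ is split in the standard Raviart-type way using skew-symmetrization, and the non-conforming consistency error (jump terms across edges arising from integration by parts) is controlled by the $H^{k+1}$-regularity of $c$. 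Combined with the velocity-pressure estimate above and a discrete Gronwall inequality in $n$, this yields
\begin{equation*}
\max_n \|\xi_c^n\|^2 + \tau\sum_{n=1}^N \|\nabla_h \xi_c^n\|^2 \le C(h^{2(k+1)}+\tau^2).
\end{equation*}
Inserting this back into the velocity-pressure bound and using the triangle inequality with the approximation estimates for $\brho_\bullet$ delivers the claimed $\mathcal{O}(h^{k+1}+\tau)$ rate.

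The principal obstacle is not any single estimate but rather the simultaneous management of three coupled nonlinearities, $A(c_h)$, $b(c_h)\partial_\tau p_h$, and especially $D(\bu_h)\nabla c_h$, within the non-conforming setting where test functions are not globally $H^1$. Obtaining the uniform $L^\infty$ control of $\bu_h$ needed to linearise $D$ is the crux, and it must be closed by an induction argument that simultaneously uses the forthcoming error bound and the inverse inequality; this is why the hypothesis $\tau=\mathcal{O}(h^{k+1})$ is essential. The non-conforming consistency error and the VEM stabilisation terms, although technical, only contribute higher-order perturbations to the Gronwall argument and do not affect the final rate.
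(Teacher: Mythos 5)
Your overall skeleton (projection/interpolation splitting, error equations for each subsystem, discrete Gronwall, triangle inequality) matches the paper, but there is a genuine gap in how you close the coupling, and it is specific to the \emph{compressible} model. The concentration error equation contains the term $K_h\bigl(c_h^{n};\tfrac{p_h^{n}-p_h^{n-1}}{\tau},z_h\bigr)-K\bigl(c^{n};\tfrac{\partial p^{n}}{\partial t},z_h\bigr)$, and estimating it requires control of the \emph{discrete time derivative} of the pressure error, $d_t\theta^n=\tfrac{\theta^n-\theta^{n-1}}{\tau}$ with $\theta=\P_p p-p_h$ (in your notation $\partial_\tau\xi_p^n$), not merely of $\|\xi_p^n\|$. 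Your velocity--pressure step, which tests with $(\bxi_\bu^n,\bxi_p^n)$, only yields $\|\bxi_\bu^n\|$ and (after telescoping the $W$-term) $\max_n\|\xi_p^n\|$; the backward-Euler consistency bound you invoke concerns $\partial_\tau p(t_n)-\partial_t p(t_n)$ for the exact solution and does not substitute for $\|d_t\theta^n\|$. This is precisely why the paper first \emph{differences} the velocity--pressure error equations between consecutive time levels, divides by $\tau$, and tests with $d_t\bbeta^n$ and $d_t\theta^n$ (Theorem~\ref{thm.up}), obtaining $\|d_t\theta^N\|^2+\tau\sum_n\|d_t\bbeta^n\|^2$ on the left; and, correspondingly, it tests the concentration error equation with $d_t\xi^n$ rather than $\xi^n$ (Theorem~\ref{thm.c}), so that the resulting term $\tau\sum_n\|d_t\xi^n\|^2$ can absorb the $\|d_t\xi^{n-1}\|$ contributions that the time-differenced coefficients $A(\Pi_{k+1}^0 c_h^{n-1})$, $d(\Pi_{k+1}^0 c_h^{n-1})$ generate in Theorem~\ref{thm.up}. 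Without this pair of ingredients your two recursions do not interlock: the concentration estimate needs $\|d_t\theta^n\|$, which your scheme never produces, and testing with $\xi_c^n$ alone does not produce the $\tau\sum_n\|d_t\xi^n\|^2$ needed on the other side. (For the incompressible problem, where the $b(c)\partial_t p$ and $d(c)\partial_t p$ terms are absent, your simpler energy argument would suffice; here it does not.)

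Two further, more minor, divergences. First, you work with plain interpolants ($H(\divc)$ interpolant, $L^2$ projection, nonconforming interpolant), whereas the paper uses discrete elliptic/energy projections: $(\P_\bu,\P_p)$ defined through $\cA_h^{c}$ and $B$ in \eqref{defn.projection}, and $\P_c$ defined through $\Gamma_{c,h}$ in \eqref{eqn.projectionP}, which are designed so that the leading consistency terms (including the nonconforming jump contribution $\cN_h$) cancel in the error equations and so that optimal $L^2$ and time-derivative estimates (Lemma~\ref{lem.projectionup}, Lemma~\ref{lem.cPcerror}, Corollaries~\ref{cor.up} and~\ref{cor.ctPcterror}, Lemma~\ref{lem.cPcnuuhn}) are available where the proof needs them; with bare interpolants you would have to estimate these consistency terms separately. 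Second, your induction-based uniform $L^\infty$ bound on $\bu_h$ to linearise $D(\cdot)$ is replaceable: the paper instead uses the Lipschitz continuity of $D$, the bound $\|\bPi_k^{0,K}\nabla\P_c c\|_{0,\infty,K}\lesssim 1$, a summation by parts in time in the term $A_5$, and inverse estimates together with the hypothesis $\tau=\mathcal{O}(h^{k+1})$ to remove the $L^\infty$-norms of $\xi$ and $d_t\xi$; your induction could be made to work, but it is not needed and does not repair the main gap identified above.
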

\noindent The proof of this theorem is presented in Section~\ref{sec:errors}.
\section{The virtual element method}\label{sec:vem}
This section presents the virtual element method for the weak formulation \eqref{eqn.weak}.

\smallskip

\noindent Let $\cT_h$ be a discretisation of $\O$ into polygons $K$. Let $\cE_h$ denote the set of all edges of $\cT_h$, and let $\cE_h^K$ be the set of all edges of $K \in \cT_h$. Let $h_K$ be the diameter of $K$ and mesh-size $h:=\max_{K \in \cT_h} h_K$. Let $ h_e$ be the length of the edge $e$, and $n_K$ be the number of edges of $K$. Assume that there exists a $\rho_0>0$ such that for all $h>0$ and for all $K \in \cT_h$:\\

\noindent (\textbf{D1}) $K$ is star-shaped with respect to a ball of radius $\rho \ge \rho_0h_K$,\\
(\textbf{D2}) $h_e \ge \rho_0h_K$ for all $e \in \cE_h^K$.

\medskip

\noindent Note that these two assumptions imply that the number of edges of each element is uniformly bounded. Additionally,
we will require following quasi-uniformity:\\% to prove Lemma~\ref{lem.Pinablainfty}:\\
(\textbf{D3}) for all $h>0$ and for all $K \in \cT_h$, it holds $h_K \ge \rho_1 h$, for some positive uniform constant $\rho_1$.

\subsection{Discrete spaces}
Let $K \in \cT_h$ and let $k \in \N_0$ be a given degree of accuracy. Then the local velocity virtual element space \cite{Veiga_HdivHcurlVEM,MixedVEM_general_2016} is defined by
\begin{align}\label{defn.Vhk}
\bV_h(K):&=\{\bv \in H(\divc;K)\cap H(\rot;K):\, \bv\cdot\bn_{|e} \in \P_k(e) \fl e \in \cE_h^K,\\
&\qquad \quad \divc \bv \in \P_k(K),\, \rot \bv \in \P_{k-1}(K)\}.%\\
%Q_h(K):&=\{q \in L^2(K):\, q \in \P_k(K)\}.
\end{align}
Obviously, $(\P_k(K))^2 \subseteq \bV_h(K)$. The degrees of freedom $\{\dof_j^{\bV_h(K)}\}_{j=1}^{\dim \bV_h(K)}$ on $\bV_h(K)$ are
\begin{enumerate}
	\item $\displaystyle \frac{1}{|e|}\int_e \bv \cdot \bn p_k \ds \fl p_k \in \P_k(e) \fl e \in \cE_h^K$\\
	\item $\displaystyle \frac{1}{\sqrt{|K|}}\int_K \divc \bv p_k \dx \fl p_k \in \P_k(K)\setminus \R$\\
	\item $\displaystyle \frac{1}{|K|}\int_K \bv \cdot \bx^\perp p_{k-1} \dx \fl p_{k-1} \in \P_{k-1}(K)$,	
\end{enumerate}
with $\bx^\perp:=(x_2,-x_1)^T$, where we assume the coordinates to be centered at the barycenter of the element.

\medskip

\noindent The local pressure virtual element space \cite{Veiga_HdivHcurlVEM,MixedVEM_general_2016} is
\begin{align}\label{defn.Vhk1}
%	\bV_h(K):&=\{\bv \in H(\divc;K)\cap H(\rot;K):\, \bv\cdot\bn_{|e} \in \P_k(e) \fl e \in \cE_h^K,\\
%	&\qquad \quad \divc \bv \in \P_k(K),\, \rot \bv \in \P_{k-1}(K)\}\\
	Q_h(K):&=\{q \in L^2(K):\, q \in \P_k(K)\}.
\end{align}
Observe that $\P_k(K) \subseteq Q_h(K)$. The degrees of freedom $\{\dof_j^{Q_h(K)}\}_{j=1}^{\dim Q_h(K)}$ on $Q_h(K)$ are
\begin{enumerate}
	\item $\displaystyle \frac{1}{{|K|}}\int_K q p_k \dx \fl p_k \in \P_k(K).$
\end{enumerate}

\medskip

\noindent These two spaces are coupled with the preliminary local concentration spaces \cite{ncVEM_2016}
\begin{align}\label{defn.ZhKtilde}
	\widetilde{Z}_h(K):&=\displaystyle \{z \in H^1(K):\,\frac{\partial z}{\partial n} \in \P_k(e) \fl e \in \cE_h^K, \Delta z \in \P_{k-1}(K)\}
\end{align}
It is clear from \eqref{defn.ZhKtilde} that $\P_{k+1}(K) \subseteq \widetilde{Z}_h(K)$. The degrees of freedom $\{\dof_j^{\widetilde{Z}_h(K)}\}_{j=1}^{\dim \widetilde{Z}_h(K)}$ on $\widetilde{Z}_h(K)$ is defined by
\begin{enumerate}
	\item $\displaystyle \frac{1}{|e|}\int_e zp_k \ds \fl p_k \in \P_k(e) \fl e \in \cE_h^K$\\
	\item $\displaystyle \frac{1}{{|K|}}\int_K zp_{k-1} \dx \fl p_{k-1} \in \P_{k-1}(K).$
\end{enumerate}

\noindent Note that $k=0$ provides the lowest order local VE spaces. Let $\bPi_k^{0,K}:(L^2(K))^2 \to (\P_k(K))^2$ be the $L^2$ projector onto the vector-valued polynomials of degree atmost $k$ in each component. That is, for a given $\bsf \in (L^2(\O))^2$,
\begin{equation}\label{defn.Pi}
(\bPi_k^{0,K}\bsf,\bp_k)=(\bsf,\bp_k) \fl \bp_k \in (\P_k(K))^2.
\end{equation}
This operator is computable for functions in $\bV_h(K)$ only by knowing their values at the degrees of freedom. Also, an integration by parts leads to, for $z_h \in \widetilde{Z}_h(K)$,
\[\int_K \bPi_k^{0,K}\nabla z_h \cdot \bp_k \dx=\int_K \nabla z_h \cdot \bp_k \dx=-\int_K z_h \divc \bp_k \dx +\int_{\partial K}z_h\bp_k\cdot \bn \ds,\]
for all $\bp_k \in (\P_k(K))^2$. The right-hand side is computable using the degrees of freedom of $\widetilde{Z}_h(K)$ and so is the left-hand side.

\smallskip

\noindent In addition to the $L^2$ projector described in \eqref{defn.Pi}, one needs the elliptic projector $\Pi_{k+1}^{\nabla,K}:H^1(K) \to \P_{k+1}(K)$, which is defined as follows:
\begin{subequations}\label{defn.Pielliptic}
\begin{align}
(\nabla \Pi_{k+1}^{\nabla,K}z, \nabla p_{k+1})&=(\nabla z,\nabla p_{k+1}) \fl p_{k+1} \in \P_{k+1}(K) \label{eqn.Pielliptic.a}\\
\frac{1}{|\partial K|}\int_{\partial K} \Pi_{k+1}^{\nabla,K}z&=\frac{1}{|\partial K|}\int_{\partial K} z \mbox{ for } k=0\label{eqn.Pielliptic.k0}\\
\frac{1}{|K|}\int_{ K} \Pi_{k+1}^{\nabla,K}z&=\frac{1}{| K|}\int_{ K} z \mbox{ for } k\ge 1\label{eqn.Pielliptic.k1}
\end{align}
\end{subequations}
for all $z \in H^1(K)$. Note that $\Pi_{k+1}^{\nabla,K}p_{k+1}=p_{k+1}$ for all $p_{k+1} \in \P_{k+1}(K)$. For any $z \in \widetilde{Z}_h(K)$, $\Pi_{k+1}^{\nabla,K}z$ can be computed using integration by parts and the degrees of freedom of $\widetilde{Z}_h(K)$.

\smallskip

\noindent Since the projections in the $L^2$ norm are available only on polynomials of degree $\le k-1$ directly from the degrees of freedom of $\widetilde{Z}_h(K)$, in order to compute the $L^2$ projections on $\P_{k+1}(K)$, we consider a modified virtual element space \cite{Equivalentprojectors_2013} for the concentration. Define
\begin{align}\label{defn.ZhK}
{Z}_h(K):&=\displaystyle \{z \in H^1(K):\,\frac{\partial z}{\partial n} \in \P_k(e) \fl e \in \cE_h^K, \Delta z \in P_{k+1}(K),\\
&\qquad  \int_K \Pi_{k+1}^{\nabla,K}z p_{k+1}\dx= \int_K z p_{k+1}\dx \fl p_{k+1} \in \P_{k+1}(K)\setminus\P_{k-1}(K)\},
\end{align}
where $\P_{k+1}(K)\setminus\P_{k-1}(K)$ is the space of polynomials in $\P_{k+1}(K)$ which are $L^2(K)$ orthogonal to $\P_{k-1}(K)$. It can be shown that the space $Z_h(K)$ has the same degrees of freedom and the same dimension as $\widetilde{Z}_h(K)$ \cite{Equivalentprojectors_2013,ncVEM_Stokes}.

\smallskip
\noindent In the sequel, the notation ``$a\lesssim b$ (resp. $ a \gtrsim b$)'' means that there exists a generic constant $C$ independent of the mesh parameter $h$ and time step size $\tau$ such that $a \le Cb$ (resp. $a \ge C b$). The approximation properties for the projectors are stated next \cite[Lemma 5.1]{VEM_general_2016}, \cite[Lemma 3.1]{Veiga_miscibledisplacement_2021}.

\begin{lem}[Approximation properties]\label{lem.approx}Given $K \in \cT_h$, let $\psi$ and $\bpsi$ be sufficiently smooth scalar and vector-valued functions, respectively. Then, it holds, for all $k \in \N_0$,
	\begin{align}
&(a)\,	\|\psi-\Pi_k^{0,K}\psi\|_{\ell,K} \lesssim h_K^{s-\ell} |\psi|_{s,K}, \quad 0 \le \ell \le s\le k+1\\
&(b)\,	\|\bpsi-\bPi_k^{0,K}\bpsi\|_{\ell,K} \lesssim h_K^{s-\ell} |\bpsi|_{s,K}, \quad 0 \le \ell \le s\le k+1\\
&(c)\,	\|\psi-\Pi_k^{\nabla,K}\psi\|_{\ell,K} \lesssim h_K^{s-\ell} |\psi|_{s,K}, \quad 0 \le \ell \le s\le k+1, \, s\ge 1.
	\end{align}
\end{lem}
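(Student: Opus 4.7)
\textbf{Proof plan for Lemma~\ref{lem.approx}.} All three bounds are standard Bramble--Hilbert estimates on the star-shaped element $K$, and they follow the same three-step template: (i) Dupont--Scott polynomial approximation on $K$, (ii) polynomial preservation by the projector, and (iii) stability of the projector in the relevant norm. The mesh regularity assumption \textbf{(D1)} (each $K$ is star-shaped with respect to a ball of radius $\ge \rho_0 h_K$) is what lets us invoke the Dupont--Scott theory with constants depending only on $\rho_0$.

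For part (a), I would first recall the Dupont--Scott averaged Taylor polynomial $q \in \P_k(K)$, which, thanks to \textbf{(D1)}, satisfies
\begin{equation*}
\|\psi - q\|_{\ell,K} \lesssim h_K^{s-\ell}\,|\psi|_{s,K}, \qquad 0 \le \ell \le s \le k+1.
\end{equation*}
Since $\Pi_k^{0,K}$ is the $L^2$-orthogonal projector onto $\P_k(K)$, it fixes $q$, so
\begin{equation*}
\psi - \Pi_k^{0,K}\psi \;=\; (\psi - q) \;-\; \Pi_k^{0,K}(\psi - q).
\end{equation*}
For $\ell = 0$, the $L^2$ stability $\|\Pi_k^{0,K}\phi\|_{0,K} \le \|\phi\|_{0,K}$ finishes the argument. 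For $1 \le \ell \le s$, I would combine (a) the inverse inequality on the polynomial space $\P_k(K)$, namely $|p|_{\ell,K} \lesssim h_K^{-\ell}\|p\|_{0,K}$ (valid under \textbf{(D1)}--\textbf{(D2)} by a standard scaling argument to a reference configuration), with (b) the $L^2$ stability of $\Pi_k^{0,K}$, to obtain
\begin{equation*}
\|\Pi_k^{0,K}(\psi-q)\|_{\ell,K} \lesssim h_K^{-\ell}\|\psi-q\|_{0,K} \lesssim h_K^{s-\ell}|\psi|_{s,K}.
\end{equation*}
Summing the two contributions gives (a). Part (b) is literally the componentwise version of (a), so I would apply (a) to each scalar component of $\bpsi$ and sum up.

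For part (c), the ingredients are almost the same, the only change being that $\Pi_{k+1}^{\nabla,K}$ is an $H^1$-projector pinned by the boundary (or volume) average condition \eqref{eqn.Pielliptic.k0}--\eqref{eqn.Pielliptic.k1}. It again preserves $\P_{k+1}(K)$. I would pick the Dupont--Scott polynomial $q \in \P_{k+1}(K)$ with
\begin{equation*}
\|\psi - q\|_{\ell,K} \lesssim h_K^{s-\ell}|\psi|_{s,K},\qquad 1 \le s \le k+2,\; 0\le \ell \le s,
\end{equation*}
and write $\psi - \Pi_{k+1}^{\nabla,K}\psi = (\psi - q) - \Pi_{k+1}^{\nabla,K}(\psi - q)$. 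For $\ell=1$, the $H^1$-stability $|\Pi_{k+1}^{\nabla,K}\phi|_{1,K} \le |\phi|_{1,K}$ (immediate from \eqref{eqn.Pielliptic.a} and Cauchy--Schwarz) controls the projector term; to jump to the full $H^1$ norm I would use a Poincar\'e--Friedrichs inequality on $K$ combined with the zero-mean condition on $\phi - \Pi_{k+1}^{\nabla,K}\phi$ supplied by \eqref{eqn.Pielliptic.k0} or \eqref{eqn.Pielliptic.k1}, again with constants depending only on $\rho_0$. The $\ell=0$ case follows from a duality (Aubin--Nitsche) argument or simply from the Poincar\'e--Friedrichs step above. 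For $\ell \ge 2$, one uses the inverse inequality on $\P_{k+1}(K)$ as in step (a).

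\textbf{Main obstacle.} The essentially routine polynomial approximation is \emph{not} the hard part; the subtle point is establishing the Poincar\'e--Friedrichs and inverse inequalities on the polytopal element $K$ with constants depending only on the chunkiness $\rho_0$. Rather than reproducing these element-wise estimates from scratch, my plan is to cite them from the Dupont--Scott theory and the cited VEM references~\cite{VEM_general_2016,Veiga_miscibledisplacement_2021}, where they are proved under exactly the mesh assumptions \textbf{(D1)}--\textbf{(D3)} imposed here.
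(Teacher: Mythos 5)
Your proposal is sound, but note that the paper does not prove this lemma at all: it is imported verbatim by citation from \cite[Lemma 5.1]{VEM_general_2016} and \cite[Lemma 3.1]{Veiga_miscibledisplacement_2021}, so there is no in-paper argument to compare against. The Dupont--Scott averaged-Taylor-polynomial plus projector-invariance plus stability template you describe is exactly the standard proof given in those references under the star-shapedness assumption \textbf{(D1)}, and your handling of the three cases (including the Poincar\'e step for the elliptic projector, which already yields the $\ell=0$ bound without any duality argument) is correct up to the harmless re-indexing of $\Pi_{k+1}^{\nabla,K}$ versus the generic $\Pi_k^{\nabla,K}$ in the statement.
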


\noindent For every decomposition $\cT_h$ of $\Omega$ into simple polygons $K$, define the global spaces by
\begin{align*}
\bV_h:&=\{\bv \in \bV:\bv_{|K} \in \bV_h(K) \fl K \in \cT_h\}\\
Q_h:&=\{q \in Q:q_{|K} \in Q_h(K) \fl K \in \cT_h\}\\
Z_h:&=\{z \in H^{1,\rm{nc}}(\cT_h;k):z_{|K} \in Z_h(K) \fl K \in \cT_h\},
\end{align*}
where
\[H^{1,\rm{nc}}(\cT_h;k):=\{z \in H^1(\cT_h):\int_e \jump{z}\cdot\bn q\ds=0 \fl q \in P_k(e) \fl e \in \cE_h\}\]
Define, for all $\bu_h \in \bV_h$,
\[\|\bu_h\|_{\bV_h}^2:=\sum_{K \in \cT_h}\|\bu_h\|_{V,K}^2:=\sum_{K \in \cT_h}[\|\bu_h\|^2+\|\divc \bu_h\|^2].\]
Let the $\Pi_k^0$, $\bPi_k^0$, and $\Pi_{k+1}^\nabla$ be the global projectors such that for all $K \in \cT_h$,
\[\Pi^0_{{k}_{|K}}=\Pi_k^{0,K},\,\, \bPi^0_{{k}_{|K}}=\bPi_k^{0,K},\,\, \Pi^\nabla_{{k+1}_{|K}}=\Pi_{k+1}^{\nabla,K}.\]
The sets of global degrees of freedom $\{\dof_j^{\bV_h}\}_{j=1}^{\dim \bV_h}$,$\{\dof_j^{Q_h}\}_{j=1}^{\dim Q_h}$ and $\{\dof_j^{Z_h}\}_{j=1}^{\dim Z_h}$ are achieved by linking together their corresponding local counterparts.

\subsection{Semi-discrete formulation}\label{sec.semidiscrete}
This section deals with the semi-discrete weak formulation of \eqref{eqn.weak} which is continuous in time and discrete in space. Let $\Pi_0^{0,K}:L^2(K)\to \P_0(K)$ and $\bPi_0^{0,K}:(L^2(K))^2\to (\P_0(K))^2$ be the $L^2$ projectors onto the scalar and vector valued functions.

\noindent The semi-discrete variational formulation of \eqref{eqn.weak} seeks $\bu_h \in \bV_h$, $p_h \in Q_h$, and $c_h \in Z_h$ such that, for almost every $t \in J$,
\begin{subequations}\label{eqn.semidiscrete}
\begin{align}
\cA_h(c_h;\bu_h,\bv_h)+B(\bv_h,p_h)&=0, \fl \bv_h \in \bV_h \label{eqn.semidiscreteqv}\\
W_h\left(c_h;\frac{\partial p_h}{\partial t},w_h\right)-B(\bu_h,w_h)&=(q,w_h), \fl w_h \in Q_h\label{eqn.semidiscreteqq}\\
\cM_h\left(\frac{\partial c_h}{\partial t},z_h\right)+K_h\left(c_h;\frac{\partial p_h}{\partial t},z_h\right)+&\Theta_h(\bu_h,c_h;z_h)+\cD_h(\bu_h;c_h,z_h)+(qc_h,z_h)_h\nonumber\\
&=(q\hc,z_h)_h, \fl z_h \in Z_h\label{eqn.semidiscreteqz}\\
\end{align}
\end{subequations}
with the initial condition 
\[c_h(0)=c_{0,h} \mbox{ and } p_h(0)=p_{0,h},\]
where $c_{0,h}$ (resp. $p_{0,h}$) denotes the interpolant of $c$ (resp. $p$) , which will be defined subsequently. Here,
\begin{enumerate}
	\item The term $\cA_h(\bullet;\bullet,\bullet)$ in \eqref{eqn.semidiscreteqv} is defined by
	\begin{align}
		\cA_h\left(c_h;\bu_h,\bv_h\right)&:=\sum_{K\in \cT_h} \cA_h^K\left(c_h;\bu_h,\bv_h\right),
	\end{align}
	where 
	\begin{align}
		\cA_h^K\left(c_h;\bu_h,\bv_h\right):=&\int_K A(\Pi_{k+1}^{0,K}c_h) \bPi_{k}^{0,K}(\bu_h)\cdot\bPi_{k}^{0,K}(\bv_h)\dx\\
		&\qquad + \nu_\cA^K(c_h) S_\cA^K((I-\bPi_{k}^{0,K})\bu_h,(I-\bPi_{k}^{0,K})\bv_h)
	\end{align}
	with $S_\cA^K(\bullet,\bullet)$ denotes the stabilization term with property given below in \eqref{eqn.SA} and 
	\[\nu_A^K(c_h)=|A(\Pi_{0}^{0,K}c_h)|.\]
	\item The term $W_h(\bullet;\bullet,\bullet)$ in \eqref{eqn.semidiscreteqq} is defined by
	\begin{align}
		W_h\left(c_h;\frac{\partial p_h}{\partial t},w_h\right)=\sum_{K\in \cT_h}\int_K d(\Pi_{k+1}^{0,K}c_h)\frac{\partial p_h}{\partial t}w_h \dx.
	\end{align}
	\item The term $\cM_h(\bullet,\bullet)$ in \eqref{eqn.semidiscreteqz} is defined by
\begin{align}
\cM_h\left(\frac{\partial c_h}{\partial t},z_h\right)&:=\sum_{K\in \cT_h} \cM_h^K\left(\frac{\partial c_h}{\partial t},z_h\right), \label{defn.Mh}
\end{align}
where 
\begin{align}
\cM_h^K\left(c_h,z_h\right):=&\int_K \phi (\Pi_{k+1}^{0,K}c_h)(\Pi_{k+1}^{0,K}z_h)\dx+ \nu_\cM^K(\phi) S_\cM^K((I-\Pi_{k+1}^{0,K})c_h,(I-\Pi_{k+1}^{0,K})z_h)
\end{align}
with $S_\cM^K(\bullet,\bullet)$ denotes the stabilization term with property given below in \eqref{eqn.SM} and 
\[\nu_M^K(\phi)=|\Pi_{0}^{0,K}\phi|.\]
\item The term $K_h(\bullet;\bullet,\bullet)$ in \eqref{eqn.semidiscreteqz} is defined by
\begin{align}
	K_h\left(c_h;\frac{\partial p_h}{\partial t},z_h\right)=\sum_{K\in \cT_h}\int_K b(\Pi_{k+1}^{0,K}c_h)\frac{\partial p_h}{\partial t}\Pi_{k+1}^{0,K}z_h \dx.
\end{align}
	\item The term $\Theta_h(\bullet,\bullet;\bullet)$ in \eqref{eqn.semidiscreteqz} is defined by
%\begin{align}
%	\Theta_h(\bu_h,c_h;z_h)&:=	\half[(\bu_h\cdot\nabla c_h,z_h)_h-(\divc \bu_hc_h,z_h)_h-(\bu_h,c_h\nabla z_h)_h]\label{eqn.Thetah}
%	\end{align}
%	where 
%	\begin{align*}
%	(\bu_h\cdot\nabla c_h,z_h)_h&=\sum_{K \in \cT_h}\int_K \bPi_{k}^{0,K}\bu_h\cdot \bPi_{k}^{0,K}(\nabla c_h)\Pi_{k+1}^{0,K}z_h \dx\\
%	(\divc \bu_h c_h,z_h)_h&=\sum_{K \in \cT_h}\int_K\divc \bu_h \Pi_{k+1}^{0,K}c_h\Pi_{k+1}^{0,K}z_h \dx\\
%	(\bu_h,c_h\nabla z_h)_h&=\sum_{K \in \cT_h}\int_K \bPi_{k}^{0,K}\bu_h \Pi_{k+1}^{0,K} c_h\cdot\bPi_{k}^{0,K}(\nabla z_h) \dx.
%	\end{align*}
\begin{align}
\Theta_h(\bu_h,c_h;z_h)&:=(\bu_h\cdot\nabla c_h,z_h)_h=\sum_{K \in \cT_h}\int_K \bPi_{k}^{0,K}\bu_h\cdot \bPi_{k}^{0,K}(\nabla c_h)\Pi_{k+1}^{0,K}z_h\dx.
\end{align}
\item The term $\cD_h(\bullet;\bullet,\bullet)$ in \eqref{eqn.semidiscreteqz} is defined by
\begin{align}
\cD_h\left(\bu_h,c_h;z_h\right)&:=\sum_{K\in \cT_h} \cD_h^K\left(\bu_h,c_h;z_h\right),
\end{align}
where 
\begin{align}
\cD_h^K\left(\bu_h,c_h;z_h\right):=&\int_K D(\bPi_{k}^{0,K}\bu_h) \bPi_{k}^{0,K}(\nabla c_h)\cdot\bPi_{k}^{0,K}(\nabla z_h)\dx\\
&\qquad + \nu_\cD^K(\bu_h)S_\cD^K((I-\Pi_{k+1}^{\nabla,K})c_h,(I-\Pi_{k+1}^{\nabla,K})z_h)
\end{align}
with $S_\cD^K(\bullet,\bullet)$ denotes the stabilization term with property given below in \eqref{eqn.SD} and 
\[\nu_D^K(\bu_h)=\nu_\cM^K(\phi)(d_m+d_t|\bPi_{0}^{0,K}\bu_h|).\]
\item The term $(q\bullet,\bullet)_h$  in \eqref{eqn.semidiscreteqz} is defined by
\begin{equation}
	(qc_h,z_h)_h:=\sum_{K\in \T_h}\int_K q \Pi_{k+1}^{0,K}c_h\Pi_{k+1}^{0,K}z_h \dx.
\end{equation}
\item The term $(q\hc,\bullet)_h$  in \eqref{eqn.semidiscreteqz} is defined by
\begin{equation}
(q\hc,z_h)_h:=\sum_{K\in \T_h}\int_K q\hc\Pi_{k+1}^{0,K}z_h \dx.
\end{equation}
\end{enumerate}
The stabilisation terms $S_\cM:Z_h\times Z_h\to \R$, $S_\cD:Z_h\times Z_h \to \R$, and $S_\cA:\bV_h \times \bV_h \to \R$ are symmetric and positive definite bilinear forms with the property that for all $K \in \cT_h$, there exists positive constants $M_0^\cM,M_1^\cM,M_0^\cD,M_1^\cD,M_0^\cA,$ and $M_1^\cA$ independent of $h$ and $K$ such that
\begin{subequations}
\begin{align}
&M_0^\cM \|z_h\|_{0,K}^2 \le S_\cM^K(z_h,z_h) \le M_1^\cM \|z_h\|_{0,K}^2 \quad \fl z_h \in Z_h \cap \ker(\Pi_{k+1}^{0,K})\label{eqn.SM}\\
&M_0^\cD \|\nabla z_h\|_{0,K}^2 \le S_\cD^K(z_h,z_h) \le M_1^\cD \|\nabla z_h\|_{0,K}^2 \quad \fl z_h \in Z_h \cap \ker(\Pi_{k+1}^{\nabla,K})\label{eqn.SD}\\
&M_0^\cA \|\bv_h\|_{0,K}^2 \le S_\cA^K(\bv_h,\bv_h) \le M_1^\cA \|\bv_h\|_{0,K}^2 \quad \fl \bv_h \in \bV_h \cap \ker(\bPi_{k}^{0,K}).\label{eqn.SA}
\end{align}
\end{subequations}
The above properties prove the continuity of $\cM_h^K(\bullet,\bullet)$, $\cD_h^K(\bullet,\bullet;\bullet)$, and $\cA_h^K(\bullet,\bullet;\bullet)$ respectively. Under the mesh assumption (\textbf{D1})-(\textbf{D2}), a simplier choice of these stabilisation terms are given by
\begin{align*}
S_\cM^K(c_h,z_h)&=|K|\sum_{j=1}^{\dim Z_h(K)}\dof_j^{Z_h(K)}(c_h) \dof_j^{Z_h(K)} (z_h)\\
S_\cD^K(c_h,z_h)&=\sum_{j=1}^{\dim Z_h(K)}\dof_j^{Z_h(K)}(c_h) \dof_j^{Z_h(K)} (z_h)\\
S_\cA^K(\bu_h,\bv_h)&=|K|\sum_{j=1}^{\dim \bV_h(K)}\dof_j^{\bV_h(K)}(\bu_h) \dof_j^{\bV_h(K)} (\bv_h).
\end{align*}
%\subsubsection{Well-posedness}
%Define
%\[\nu_\cM^-=\min_{K \in \cT_h}\nu_\cM^K,\quad \nu_\cM+=\max_{K \in \cT_h}\nu_\cM^K.\]
%In a similar way, we define $\nu_\cD^+,$ $\nu_\cD^-$, $\nu_\cA^-$ and $\nu_\cA^+$.

\noindent The lemma stated below shows the continuity and coercivity properties of the discrete bilinear form in \eqref{eqn.semidiscrete}.The proof follows analogous to \cite[Lemma 3.2]{Veiga_miscibledisplacement_2021} with \cite[Lemma 3.1.c]{Veiga_miscibledisplacement_2021} replaced with Lemma~\ref{lem.approx}.c and hence is skipped.
\begin{lem}[Properties of the discrete bilinear forms]\label{lem.propertiesdiscrete} The following properties hold for the discrete bilinear forms in \eqref{eqn.semidiscrete}
	\begin{itemize}
		\item[$(a)$] $\cM_h(c_h,z_h) \lesssim \|c_h\|\|z_h\|$ for all $c_h,z_h\in Z_h$,
		\item[(b)] $\cM_h(z_h,z_h) \gtrsim\|z_h\|^2$ for all $z_h \in Z_h$,
			\item[(c)] $\cD_h(\bu_h;c_h,z_h) \lesssim | c_h|_{1,\cT_h}|z_h|_{1,\cT_h}$ for all $\bu_h \in \bV_h$ and $c_h,z_h \in Z_h$,
		\item[(d)] $\cD_h(\bu_h;z_h,z_h) \gtrsim |z_h|_{1,\cT_h}^2$ for all $\bu_h \in \bV_h$ and $z_h \in Z_h$,
	\item[(e)] $\cA_h(c_h;\bu_h,\bv_h) \lesssim \|\bu_h\|\|\bv_h\|$ for all $c_h \in Z_h$ and $\bu_h,\bv_h \in \bV_h$,
		\item[(f)] $\cA_h(c_h;\bv_h,\bv_h) \gtrsim \|\bv_h\|^2$ for all $c_h \in Z_h$ and $\bv_h \in \bV_h$.
	\end{itemize}
Thus, $\cA_h(c_h,\bullet,\bullet)$ is coercive on the kernel
\begin{equation}\label{defn.kerneldiscrete}
\cK_h:=\{\bv_h \in \bV_h:B(\bv_h,q_h)=0 \fl q_h \in Q_h\} \subset \cK
\end{equation}
with respect to $\|\bullet\|_{\bV_h}$, where $\cK$ is given in \eqref{defn.kernel}.
\end{lem}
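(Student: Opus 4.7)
The plan is to prove each of the six items $(a)$--$(f)$ by working locally on $K\in\cT_h$ and then summing. Each discrete form decomposes naturally into a consistency piece (involving an $L^2$- or elliptic projection onto polynomials) plus a stabilization piece, which is controlled by the spectral equivalences \eqref{eqn.SM}, \eqref{eqn.SD} and \eqref{eqn.SA}. The continuity and coercivity of the coefficients ($\phi_*\le\phi\le\phi^*$, $D_*\le D\le D^*$, $1/a^*\le A\le 1/a_*$) give constant upper and lower bounds on the scaling factors $\nu_\cM^K,\nu_\cD^K,\nu_\cA^K$.

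For the continuity statements $(a),(c),(e)$, I would bound the consistency piece by the Cauchy--Schwarz inequality followed by the $L^2$-stability of $\Pi_{k+1}^{0,K}$, $\bPi_{k}^{0,K}$ and the coefficient bounds, while the stabilization piece is controlled by the upper stability constants $M_1^\cM,M_1^\cD,M_1^\cA$ together with an $L^2$-stability estimate on $(I-\Pi_{k+1}^{0,K})z_h$ (resp.\ $(I-\bPi_k^{0,K})\bv_h$). For $(c)$ one further uses Lemma~\ref{lem.approx}(c) to pass from $\|(I-\Pi_{k+1}^{\nabla,K})z_h\|$ to a bound by the broken $H^1$-seminorm $|z_h|_{1,\cT_h}$. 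A discrete Cauchy--Schwarz over $K\in\cT_h$ then yields the global estimates.

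For the coercivity statements $(b),(d),(f)$, the key device is the Pythagorean splitting: since $\Pi_{k+1}^{0,K}$ is an $L^2$-orthogonal projection,
\begin{equation*}
\|z_h\|_{0,K}^2=\|\Pi_{k+1}^{0,K}z_h\|_{0,K}^2+\|(I-\Pi_{k+1}^{0,K})z_h\|_{0,K}^2,
\end{equation*}
and analogously in the $H^1$-seminorm for $\Pi_{k+1}^{\nabla,K}$, and in the $L^2$-norm for $\bPi_k^{0,K}$. The lower bound $\phi_*$ (resp.\ $D_*$, $1/a^*$) on the consistency part combined with the lower bound $M_0^\cM$ (resp.\ $M_0^\cD$, $M_0^\cA$) from \eqref{eqn.SM}--\eqref{eqn.SA} on the stabilization part then provides, element by element, a lower bound proportional to $\|z_h\|_{0,K}^2$ (resp.\ $|z_h|_{1,K}^2$, $\|\bv_h\|_{0,K}^2$); summation concludes the proof.

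The main obstacle is $(d)$: the consistency block of $\cD_h$ reads $(D(\bPi_k^{0,K}\bu_h)\bPi_k^{0,K}\nabla c_h,\bPi_k^{0,K}\nabla z_h)_K$, so the projection of the gradient must control the full seminorm in tandem with the stabilization on $(I-\Pi_{k+1}^{\nabla,K})c_h$. The trick, paralleling \cite{Veiga_miscibledisplacement_2021}, is to use $\bPi_k^{0,K}\nabla\Pi_{k+1}^{\nabla,K}z_h=\nabla\Pi_{k+1}^{\nabla,K}z_h$ (which holds because $\nabla\Pi_{k+1}^{\nabla,K}z_h\in(\P_k(K))^2$) together with the $H^1$-orthogonality of $\Pi_{k+1}^{\nabla,K}$. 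This yields
\begin{equation*}
|z_h|_{1,K}^2=|\Pi_{k+1}^{\nabla,K}z_h|_{1,K}^2+|(I-\Pi_{k+1}^{\nabla,K})z_h|_{1,K}^2,
\end{equation*}
and the two nonnegative contributions from consistency and stabilization combine to give $\cD_h^K(\bu_h;z_h,z_h)\gtrsim|z_h|_{1,K}^2$ uniformly in $\bu_h$. Finally, the inclusion $\cK_h\subset\cK$ in \eqref{defn.kerneldiscrete} is immediate from $\divc\bV_h\subset Q_h$ (by \eqref{defn.Vhk}), so any $\bv_h\in\cK_h$ has $\divc\bv_h=0$ pointwise; this upgrades $(f)$ from an $L^2$-coercivity to coercivity in $\|\cdot\|_{\bV_h}$ on $\cK_h$, as asserted.
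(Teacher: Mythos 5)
Your proposal is correct and follows essentially the same route as the paper, which skips the argument by deferring to \cite[Lemma 3.2]{Veiga_miscibledisplacement_2021} (with the elliptic-projector approximation property, Lemma~\ref{lem.approx}.c, in place of \cite[Lemma 3.1.c]{Veiga_miscibledisplacement_2021}): you reproduce exactly that standard reasoning, namely the consistency-plus-stabilization splitting with the spectral equivalences \eqref{eqn.SM}--\eqref{eqn.SA}, the $L^2$/$H^1$ Pythagoras identities for $\Pi_{k+1}^{0,K}$, $\bPi_k^{0,K}$, $\Pi_{k+1}^{\nabla,K}$ together with $\bPi_k^{0,K}\nabla\Pi_{k+1}^{\nabla,K}z_h=\nabla\Pi_{k+1}^{\nabla,K}z_h$ for item $(d)$, and the inclusion $\cK_h\subset\cK$ via $\divc\bV_h\subset Q_h$ to upgrade $(f)$ to coercivity in $\|\cdot\|_{\bV_h}$.
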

%\noindent Well-posedness of \eqref{eqn.semidiscrete} can be established using the tools of \cite{ncVEM_parabolic} which deals with the nonconforming virtual element method for parabolic problems and \cite{mixedVEM_Brezzi} for the mixed virtual element method together with \ref{lem.propertiesdiscrete}. More precisely, as in \cite{miscibledisplacement_1983}, for a given $c_h(t) \in L^\infty(\O)$, there exists a unique solution $(\bu_h(c_h),p_h(c_h))$ for \eqref{eqn.semidiscreteqv}-\eqref{eqn.semidiscreteqq}. A substitution of this solution in \eqref{eqn.semidiscreteqz} leads to a system of non-linear differential equation in $c_h$. Picards theorem and an a priori bound for $c_h$ then show the existence and uniqueness of the discrete concentration $c_h(t)$ for all $t \in J$.

\subsection{Energy projection for velocity and pressure}
For a fixed $c$, define the projection operators $\P_\bu:\bV \to \bV_h$ and $\P_p:Q \to Q_h$ by
\begin{subequations}\label{defn.projection}
\begin{align}
	\cA_h^c(\P_\bu \bu,\bv_h)+B(\bv_h,\P_p p)&=\cA(c;\bu,\bv_h)+B(\bv_h,p) \fl \bv_h \in \bV_h \label{defn.projectiona}\\
	B(\P_\bu \bu,w_h)&=B(\bu,w_h) \fl w_h \in Q_h.\label{defn.projectionb}
\end{align}
\end{subequations}
where
\begin{align*}
	\cA_h^c( \bu_h,\bv_h)&=\sum_{K \in \cT_h}\int_K A(c) \bPi_{k}^{0,K}(\bu_h)\cdot\bPi_{k}^{0,K}(\bv_h)\dx\\
	&\qquad +\sum_{K \in \cT_h} \nu_\cA^K(c) S_\cA^K((I-\bPi_{k}^{0,K})\bu_h,(I-\bPi_{k}^{0,K})\bv_h).
\end{align*}
The wellposedness of the projection operators $\P_\bu$ and $\P_p$ follows from the coercivity of $\cA_h^c(\bullet,\bullet)$ and the inf-sup condition of $B(\bullet,\bullet)$.

\medskip

\noindent Given $\bu \in (H^{k+1}(\cT_h))^2$ and $p \in H^{k+1}(\cT_h)$, there exist interpolants $\bu_I \in \bV_h$ and $p_I \in Q_h$ such that
\begin{align}
&	\|\bu-\bu_I\| \lesssim h^{k+1}\|\bu\|_{k+1,\cT_h} \label{eqn.interpolantu}\\
&	\|p-p_I\| \lesssim h^{k+1}\|p\|_{k+1,\cT_h}.\label{eqn.interpolantp}
\end{align}
	 For $\bv_h \in \bV_h$, the following inverse estimate \cite[(41)]{Veiga_miscibledisplacement_2021} holds:
	 \begin{equation}\label{eqn.vhinfty}
\|\bPi_k^{0,K}\bv_h\|_{0,\infty,K} \lesssim \|\bv_h\|_{0,\infty,K}.
	 \end{equation}
	Analogously, for $\bv \in L^\infty(K)$,
	 \begin{equation}\label{eqn.vinfty}
		\|\bPi_k^{0,K}\bv\|_{0,\infty,K} \lesssim \|\bv\|_{0,\infty,K}.
	\end{equation}
	\begin{lem}[Auxiliary result]\cite[Lemma 4.1]{Veiga_miscibledisplacement_2021}\label{lem.aux}
		Let $r,s,t \in \N_0$. Let $\Pi_r^0$ and $\bPi_s^0$ denote the elementwise $L^2$ projectors onto scalar and vector valued polynomials of degree atmost $r$ and $s$ respectively. Given a scalar function $\sigma \in H^{m_r}(\cT_h)$, $0 \le m_r \le r+1$, let $\kappa(\sigma)$ be a tensor valued piecewise Lipschitz continuous with respect to $\sigma$. Further, let $\widehat{\sigma} \in L^2(\O)$ and let $\bchi$ and $\bpsi$ be vector valued functions. Assume that $\kappa(\sigma) \in (L^\infty(\O))^{2 \times 2}$, $\bchi \in (H^{m_s}(\cT_h)\cap L^\infty(\O))^2$, $\bpsi \in (L^2(\O))^2$ and $\kappa(\sigma)\bchi \in (H^{m_t}(\cT_h))^2$, for some $0 \le m_s\le s+1$ and $0 \le m_t \le t+1.$ Then, for any $K \in \cT_h$,
		\begin{align*}
			&(\kappa(\sigma)\bchi,\bpsi)_{0,K}-(\kappa(\Pi_r^{0,K}\widehat{\sigma})\bPi_s^{0,K}\bchi,\bPi_t^{0,K}\bpsi)_{0,K}
			\\
			&\le \eta \left[h^{m_t}|\kappa(\sigma)\bchi|_{{m_t},K}+h^{m_s}|\bchi|_{{m_s},K}\|\kappa(\sigma)\|_{0,\infty,K}+(h^{m_r}|\sigma|_{{m_r},K}+\|\sigma-\widehat{\sigma}\|_{0,K})\|\bchi\|_{0,\infty,K}\right]\|\bpsi\|_{0,K}.
		\end{align*}
		Consequently,
		\begin{align*}
			(\kappa(\sigma)&\bchi,\bpsi)-(\kappa(\Pi_r^0\widehat{\sigma})\bPi_s^0\bchi,\bPi_t^0\bpsi)
			\\
			&\le \eta \left[h^{m_t}|\kappa(\sigma)\bchi|_{{m_t},\T_h}+h^{m_s}|\bchi|_{{m_s},\T_h}\|\kappa(\sigma)\|_{0,\infty,\O}+(h^{m_r}|\sigma|_{{m_r},\T_h}+\|\sigma-\widehat{\sigma}\|)\|\bchi\|_{0,\infty,\O}\right]\|\bpsi\|.
		\end{align*}
	\end{lem}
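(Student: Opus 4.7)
Denote $\boldsymbol{F}:=\kappa(\sigma)\bchi$ for brevity. The strategy is a two-step telescoping: insert $(\bPi_t^{0,K}\boldsymbol{F},\bpsi)_{0,K}$ to isolate the approximation of $\boldsymbol{F}$ by its $L^2$-projection from the perturbation caused by replacing $\sigma$ with $\Pi_r^{0,K}\widehat\sigma$ and $\bchi$ with $\bPi_s^{0,K}\bchi$; then, within the remaining difference, insert $\kappa(\sigma)\bPi_s^{0,K}\bchi$ to decouple the $\bchi$-projection error from the coefficient perturbation. Each of the three resulting differences matches exactly one of the three terms on the right-hand side of the claim.

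For the first step, linearity gives
\[
(\boldsymbol{F},\bpsi)_{0,K}-(\bPi_t^{0,K}\boldsymbol{F},\bpsi)_{0,K} = (\boldsymbol{F} - \bPi_t^{0,K}\boldsymbol{F},\bpsi)_{0,K},
\]
which, via Cauchy--Schwarz and Lemma~\ref{lem.approx}(b) applied to $\boldsymbol{F}$, is controlled by $h^{m_t}|\kappa(\sigma)\bchi|_{m_t,K}\|\bpsi\|_{0,K}$. Using the self-adjointness of $\bPi_t^{0,K}$, the remaining contribution $(\bPi_t^{0,K}\boldsymbol{F},\bpsi)_{0,K}-(\kappa(\Pi_r^{0,K}\widehat\sigma)\bPi_s^{0,K}\bchi,\bPi_t^{0,K}\bpsi)_{0,K}$ is rewritten as $(\boldsymbol{F}-\kappa(\Pi_r^{0,K}\widehat\sigma)\bPi_s^{0,K}\bchi,\bPi_t^{0,K}\bpsi)_{0,K}$.

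The second step splits this residual as $T_1+T_2$ with
\[
T_1:=(\kappa(\sigma)(\bchi-\bPi_s^{0,K}\bchi),\bPi_t^{0,K}\bpsi)_{0,K}, \qquad T_2:=((\kappa(\sigma)-\kappa(\Pi_r^{0,K}\widehat\sigma))\bPi_s^{0,K}\bchi,\bPi_t^{0,K}\bpsi)_{0,K}.
\]
I bound $|T_1|$ using $\|\kappa(\sigma)\|_{0,\infty,K}$, the $L^2$-stability of $\bPi_t^{0,K}$, and Lemma~\ref{lem.approx}(b) applied to $\bchi$, producing the second right-hand side term. For $|T_2|$, the Lipschitz continuity of $\kappa$ combined with the $L^\infty$-stability $\|\bPi_s^{0,K}\bchi\|_{0,\infty,K}\lesssim\|\bchi\|_{0,\infty,K}$ (analogous to~\eqref{eqn.vinfty}) and the triangle inequality
\[
\|\sigma-\Pi_r^{0,K}\widehat\sigma\|_{0,K}\le\|\sigma-\Pi_r^{0,K}\sigma\|_{0,K}+\|\Pi_r^{0,K}(\sigma-\widehat\sigma)\|_{0,K}\lesssim h^{m_r}|\sigma|_{m_r,K}+\|\sigma-\widehat\sigma\|_{0,K}
\]
(using Lemma~\ref{lem.approx}(a) and the $L^2$-stability of $\Pi_r^{0,K}$) deliver the third term. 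The consequent global estimate is then obtained by summing over $K\in\cT_h$, applying the discrete Cauchy--Schwarz inequality to the product of the elementwise seminorms and $\|\bpsi\|_{0,K}$, and using $h_K\le h$ to factor out uniform powers of $h$.

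The main subtlety lies in $T_2$: a direct Lipschitz estimate $\|\kappa(\sigma)-\kappa(\widehat\sigma)\|_{0,K}$ would fail to exploit the polynomial approximation power of $\Pi_r^{0,K}$, and would not produce the desired $h^{m_r}$-gain on the smooth coefficient $\sigma$. Routing through the auxiliary quantity $\Pi_r^{0,K}\sigma$ is essential so that the polynomial approximation error $h^{m_r}|\sigma|_{m_r,K}$ and the intrinsic discrepancy $\|\sigma-\widehat\sigma\|_{0,K}$ appear as two additive, independently tunable contributions, matching the statement verbatim.
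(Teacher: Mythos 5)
Your proposal is correct. Note that the paper itself offers no proof of this lemma: it is quoted verbatim from \cite[Lemma 4.1]{Veiga_miscibledisplacement_2021}, so there is no internal argument to compare against. Your telescoping decomposition — inserting $\bPi_t^{0,K}(\kappa(\sigma)\bchi)$, using the self-adjointness of the $L^2$ projector, and then splitting the residual into $\kappa(\sigma)(\bchi-\bPi_s^{0,K}\bchi)$ plus $(\kappa(\sigma)-\kappa(\Pi_r^{0,K}\widehat\sigma))\bPi_s^{0,K}\bchi$ — is exactly the standard route by which this estimate is established in the cited source, and each of your three bounds (Lemma~\ref{lem.approx} for the first two, Lipschitz continuity plus the triangle inequality through $\Pi_r^{0,K}\sigma$ and the $L^\infty$-stability of the projector for the third) is valid; the passage to the global bound by summing over $K$ and applying the discrete Cauchy--Schwarz inequality is also fine.
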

\begin{lem}\label{lem.projectionup}
	%, \quad 	\|\divc(\bu-\bu_I)\|_1 \lesssim h^{k}\|\bu\|_{k+1,\cT_h}
 Let $(\bu,p,c) \in  \bu \in (H^{k+1}(\cT_h)\cap L^\infty(\Omega))^2 \times H^{k+1}(\cT_h) \times H^{k+1}(\cT_h)$ solves \eqref{eqn.weak} at time $t=t_n$. Given $c_h^n \in Z_h$, let $(u_h^n,p_h^n) \in \bV_h \times Q_h$ be the solution to \eqref{eqn.fullydiscrete_uhph}. Assume that $A(c)$ is piecewise Lipschitz continuous functions with respect to $c \in L^2(\Omega)$, and $A(c^n)\bu^n \in (H^{k+1}(\cT_h))^2$. Then,
 \begin{equation}
 	(a) \, \|\bu-\P_\bu \bu\|\lesssim h^{k+1} \quad (b)\,\|p-\P_p p\| \lesssim h^{k+1}.
 \end{equation}
\end{lem}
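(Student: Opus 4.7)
The plan is a standard saddle-point decomposition. Set $\boldsymbol{\delta}_{\bu} := \P_{\bu}\bu - \bu_I \in \bV_h$ and $\delta_{p} := \P_{p} p - p_I \in Q_h$, where $\bu_I$ is the canonical $H(\divc)$-conforming interpolant satisfying \eqref{eqn.interpolantu} and $p_I := \Pi_k^0 p$ satisfies \eqref{eqn.interpolantp}. Once $\|\boldsymbol{\delta}_{\bu}\|$ and $\|\delta_{p}\|$ are shown to be $\lesssim h^{k+1}$, the triangle inequality yields (a) and (b).

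For (a), I would first exploit the commuting property $\divc \bu_I = \Pi_k^0 \divc \bu$, which follows from the moment degrees of freedom 1 and 2 in \eqref{defn.Vhk} together with the divergence theorem. Combined with \eqref{defn.projectionb}, this gives $B(\boldsymbol{\delta}_{\bu}, w_h) = 0$ for every $w_h \in Q_h$, i.e.\ $\boldsymbol{\delta}_{\bu} \in \cK_h$. Testing \eqref{defn.projectiona} with $\bv_h = \boldsymbol{\delta}_{\bu}$ makes both pressure terms vanish ($\P_{p} p \in Q_h$ kills the first, and $\divc \boldsymbol{\delta}_{\bu} \in \P_k$ gives $B(\boldsymbol{\delta}_{\bu}, p) = B(\boldsymbol{\delta}_{\bu}, \Pi_k^0 p) = 0$ for the second). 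Then Lemma \ref{lem.propertiesdiscrete}(f) yields
\[
\|\boldsymbol{\delta}_{\bu}\|^2 \lesssim \cA_h^c(\boldsymbol{\delta}_{\bu}, \boldsymbol{\delta}_{\bu}) = \bigl[\cA(c; \bu, \boldsymbol{\delta}_{\bu}) - \cA_h^c(\bu, \boldsymbol{\delta}_{\bu})\bigr] + \cA_h^c(\bu - \bu_I, \boldsymbol{\delta}_{\bu}).
\]
The second bracket is bounded by $\|\bu - \bu_I\|\,\|\boldsymbol{\delta}_{\bu}\| \lesssim h^{k+1}\|\boldsymbol{\delta}_{\bu}\|$ via Lemma \ref{lem.propertiesdiscrete}(e) and \eqref{eqn.interpolantu}. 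The first (consistency) bracket splits on each element $K$ as
\[
(A(c)\bu, (I - \bPi_k^{0,K})\boldsymbol{\delta}_{\bu})_K + (A(c)(\bu - \bPi_k^{0,K}\bu), \bPi_k^{0,K}\boldsymbol{\delta}_{\bu})_K - \nu_\cA^K(c)\,S_\cA^K\bigl((I-\bPi_k^{0,K})\bu,(I-\bPi_k^{0,K})\boldsymbol{\delta}_{\bu}\bigr),
\]
and each term is $\lesssim h^{k+1}|\bu|_{k+1,K}\,\|\boldsymbol{\delta}_{\bu}\|_K$ by Lemma \ref{lem.approx}(b), the stabilisation bound \eqref{eqn.SA}, and boundedness of $A$; alternatively, one application of Lemma \ref{lem.aux} with $\sigma=\widehat{\sigma}=c$, $\kappa=A$, $\bchi=\bu$, $\bpsi=\boldsymbol{\delta}_{\bu}$ handles the first two pieces in one stroke. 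Summing over $K$ and dividing by $\|\boldsymbol{\delta}_{\bu}\|$ gives $\|\boldsymbol{\delta}_{\bu}\| \lesssim h^{k+1}$, hence (a).

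For (b), I would appeal to the discrete inf-sup condition of $B(\cdot,\cdot)$ on $\bV_h \times Q_h$ (a standard property of this mixed VEM pair, already used for the wellposedness of $\P_p$) to write
\[
\|\delta_{p}\| \lesssim \sup_{\bv_h \in \bV_h \setminus\{0\}} \frac{B(\bv_h, \delta_{p})}{\|\bv_h\|_{\bV_h}}.
\]
Since $\divc\bv_h|_K \in \P_k(K)$ and $p_I = \Pi_k^0 p$, we have $B(\bv_h, p - p_I) = 0$, so \eqref{defn.projectiona} yields
\[
B(\bv_h, \delta_{p}) = \cA(c; \bu, \bv_h) - \cA_h^c(\P_{\bu}\bu, \bv_h) = \bigl[\cA(c; \bu, \bv_h) - \cA_h^c(\bu, \bv_h)\bigr] + \cA_h^c(\bu - \P_{\bu}\bu, \bv_h).
\]
The first bracket is bounded by $h^{k+1}\|\bv_h\|_{\bV_h}$ via the same consistency estimate as in step two, and the second by $\|\bu - \P_{\bu}\bu\|\,\|\bv_h\|_{\bV_h} \lesssim h^{k+1}\|\bv_h\|_{\bV_h}$ using part (a). This gives $\|\delta_{p}\| \lesssim h^{k+1}$, whence (b).

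The main obstacle is the consistency term $\cA(c;\bu,\cdot) - \cA_h^c(\bu,\cdot)$: one must carefully add and subtract $\bPi_k^{0,K}$ on both trial and test functions so that every residual piece either contains $(I - \bPi_k^{0,K})\bu$ (controlled by Lemma \ref{lem.approx}(b) under the regularity $\bu \in (H^{k+1}(\cT_h))^2$) or $(I - \bPi_k^{0,K})\boldsymbol{\delta}_{\bu}$ (which, by $L^2$-orthogonality of $\bPi_k^{0,K}$, reduces to the $L^2$-norm of a VEM function without demanding extra regularity of $\boldsymbol{\delta}_{\bu}$), while the stabilisation term is absorbed through \eqref{eqn.SA}. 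The $H(\divc)$-commutativity $\divc \bu_I = \Pi_k^0 \divc \bu$, which places $\boldsymbol{\delta}_{\bu}$ inside $\cK_h$ and so decouples the velocity and pressure estimates in the standard saddle-point fashion, is the other key structural ingredient.
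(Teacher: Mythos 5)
Your proposal is correct and follows essentially the same route as the paper: the same splitting $\P_\bu\bu-\bu_I$ with $\divc(\P_\bu\bu-\bu_I)=0$ and kernel coercivity plus the consistency/interpolation decomposition for (a), and the same duality argument for (b), where your abstract inf-sup supremum is exactly the paper's explicit choice of $\bv_h^*$ with $\divc\bv_h^*=p_I-\P_p p$ and $\beta^*\|\bv_h^*\|\le\|p_I-\P_p p\|$. The only cosmetic differences are that you keep $A(c)$ in the consistency split (the paper inserts $A(\Pi_{k+1}^{0}c)$ and compensates with a Lipschitz term), and that the first consistency piece is really bounded by $h^{k+1}|A(c)\bu|_{k+1,K}$ rather than $h^{k+1}|\bu|_{k+1,K}$ — a point already covered by your Lemma~\ref{lem.aux} alternative and the hypothesis $A(c^n)\bu^n\in(H^{k+1}(\cT_h))^2$.
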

%\begin{proof}
	{\it Proof of $(a)$.} The triangle inequality and \eqref{eqn.interpolantu} lead to
	\begin{equation}\label{eqn.tri.}
		\|\bu-\P_\bu \bu\| \le \|\bu-\bu_I\|+\|\bu_I-\P_\bu \bu\| \lesssim h^{k+1}+\|\bu_I-\P_\bu \bu\|.
	\end{equation}
	Let $\delta_h=\P_\bu \bu-\bu_I$. Since $\divc \delta_h =0$, the coercivity of $\cA_h^c(\bullet,\bullet)$ and \eqref{defn.projectiona} show
	\begin{align}
	\|\delta_h\|^2=	\|\delta_h\|_\bV^2& \lesssim \cA_h^c(\delta_h,\delta_h)=\cA_h^c(\P_\bu \bu-\bu_I,\delta_h)\\
		&=\cA(c;\bu,\delta_h)+B(\delta_h,p)-B(\delta_h,\P_p p)-\cA_h^c(\bu_I,\delta_h)\\
		&=\cA(c;\bu,\delta_h)-\cA_h^c(\bu_I,\delta_h)\\
		&=(\cA(c;\bu,\delta_h)-\cA_h^c(\bu,\delta_h))+\cA_h^c(\bu-\bu_I,\delta_h):=T_1+T_2.\label{eqn.t1t2}
	\end{align}
	Simple manipulation leads to
	\begin{align}
		T_1&=\sum_{K\in \cT_h}\int_K(A(c)\bu \cdot \delta_h-A(\Pi_{k+1}^{0,K}c)\bPi_k^{0,K}\bu \cdot \bPi_{0,K}\delta_h)\dx\nonumber\\
	&\quad +\sum_{K\in \cT_h}\int_K(A(\Pi_{k+1}^{0,K}c)-A(c))\bPi_k^{0,K}\bu \cdot \bPi_{0,K}\delta_h\dx\nonumber\\
	&\quad +\sum_{K \in \cT_h} \nu_\cA^K(c) S_\cA^K((I-\bPi_{k}^{0,K})\bu,(I-\bPi_{k}^{0,K})\delta_h)=:S_{1,1}+S_{1,2}+S_{1,3}.\label{eqn.t1}
	\end{align}
	Lemma~\ref{lem.aux} provides
	$S_{1,1} \lesssim h^{k+1}\|\delta_h\|$. The \Holders inequality, the Lipschitz continuity of $A(c)$, and \eqref{eqn.vinfty} read $S_{1,2}\lesssim h^{k+1}\|\delta_h\|$. The \Holders inequality, boundedness of $A(\bullet)$, \eqref{eqn.SA}, and Lemma~\ref{lem.approx} show $S_{1,3}\lesssim h^{k+1}\|\delta_h\|$. A substitution of these estimates in \eqref{eqn.t1} leads to
	\begin{equation}\label{eqn.t1.1}
		T_1 \lesssim h^{k+1}\|\delta_h\|.
	\end{equation}
	The boundedness of $\cA_h^c(\bullet,\bullet)$ and \eqref{eqn.interpolantu} provide $T_2 \lesssim h^{k+1}\|\delta_h\|$. A combination of $T_1$ and $T_2$ in \eqref{eqn.t1t2} together with \eqref{eqn.tri.} concludes the proof of $(a)$.
	
	\medskip
	
\noindent {\it Proof of $(b)$.} The triangle inequality and \eqref{eqn.interpolantp} lead to
\begin{equation}\label{eqn.tri.1}
	\|p-\P_p p\| \le \|p-p_I\|+\|p_I-\P_p p\| \lesssim h^{k+1}+\|p_I-\P_p p\|.
\end{equation}
Let $w_h^*=p_I-\P_p p \in Q_h \subset Q$. Then there exists $\bv_h^* \in \bV_h$ \cite{mixedVEM_Brezzi} such that $\divc \bv_h^*=w_h^*$ and $\beta^*\|\bv_h^*\| \le \|w_h^*\|.$ This, the interpolation property of $p_I$, and \eqref{defn.projectiona} show
\begin{align}
	\|w_h^*\|^2&=(p_I-\P_p p, \divc \bv_h^*)=(p-\P_p p, \divc \bv_h^*)\\
	&=\cA(c;\bu,\bv_h^*)-\cA_h^c(\P_\bu \bu,\bv_h^*)\\
		&=(\cA(c;\bu,\bv_h^*)-\cA_h^c(\bu,\bv_h^*))+\cA_h^c(\bu-\P_\bu \bu,\bv_h^*).\label{eqn.wh*}
\end{align}
The first term on the right hand side of \eqref{eqn.wh*} can be estimated as in $T_1$ of \eqref{eqn.t1t2} and the second term can be bounded using the continuity of $\cA_h^c(\bullet,\bullet)$ and $(a)$. Hence, $\|w_h^*\|^2 \lesssim h^{k+1}\|\bv_h^*\|$. Since $\beta^*\|\bv_h^*\| \le \|w_h^*\|$, $\|w_h^*\| \lesssim h^{k+1}$. A combination of this with \eqref{eqn.tri.1} concludes the proof of $(b)$.

\medskip

\noindent By differentiation of \eqref{defn.projectiona}-\eqref{defn.projectionb}, analogous result can be obtained for $\frac{\partial}{\partial t}(\bu-\P_\bu \bu)$ and $\frac{\partial}{\partial t}(p-\P_p p)$. These results are stated next.

\begin{cor}\label{cor.up}
	Provided that the continuous data and solution are sufficiently regular in space and time, it holds that
	\begin{equation}
	\displaystyle	(a) \, \|\frac{\partial}{\partial t}(\bu-\P_\bu \bu)\|\lesssim h^{k+1} \quad (b)\,\|\frac{\partial}{\partial t}(p-\P_p p)\| \lesssim h^{k+1}.
	\end{equation}
\end{cor}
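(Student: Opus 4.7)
The plan is to adapt the proof of Lemma~\ref{lem.projectionup} after differentiating the defining equations \eqref{defn.projectiona}--\eqref{defn.projectionb} in time. Since the degrees of freedom that define $\bu_I$ and $p_I$ are linear functionals of $\bu$ and $p$, interpolation commutes with $\partial_t$, so I set $\eta_\bu := \partial_t \bu_I - \partial_t \P_\bu \bu = \partial_t(\bu_I - \P_\bu \bu)$ and $\eta_p := \partial_t(p_I - \P_p p)$. Differentiating \eqref{defn.projectionb} yields $\divc \eta_\bu = 0$, so a triangle inequality against the interpolant (using \eqref{eqn.interpolantu}--\eqref{eqn.interpolantp} applied to $\partial_t \bu$ and $\partial_t p$, which are sufficiently regular by hypothesis $(R)$) reduces matters to controlling $\|\eta_\bu\|$ and $\|\eta_p\|$.

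Next I would differentiate \eqref{defn.projectiona}. The key observation is that $\cA_h^c$ depends on $t$ through the frozen coefficient $A(c(t))$, so the product rule gives
\begin{equation}
\cA_h^c(\partial_t \P_\bu \bu, \bv_h) + B(\bv_h, \partial_t \P_p p) = \cA(c; \partial_t \bu, \bv_h) + B(\bv_h, \partial_t p) + \mathcal{R}(\bv_h),
\end{equation}
where the remainder $\mathcal{R}(\bv_h) = (\partial_t \cA)(c; \bu, \bv_h) - (\partial_t \cA_h^c)(\P_\bu \bu, \bv_h)$ collects the contributions of $A'(c)\,\partial_t c$. Splitting $\mathcal{R}(\bv_h) = \big[(\partial_t \cA)(c; \bu, \bv_h) - (\partial_t \cA_h^c)(\bu, \bv_h)\big] + (\partial_t \cA_h^c)(\bu - \P_\bu \bu, \bv_h)$, the first bracket is of the same structure as $T_1$ in \eqref{eqn.t1t2} with coefficient tensor $A'(c)\partial_t c$ in place of $A(c)$, and Lemma~\ref{lem.aux} (applicable because $\partial_t c \in L^2(H^{k+2})$ and $\bu \in L^\infty(W^{1,\infty})$ under $(R)$) bounds it by $h^{k+1}\|\bv_h\|$; the second piece is dominated by $\|\bu - \P_\bu \bu\|\,\|\bv_h\| \lesssim h^{k+1}\|\bv_h\|$ thanks to Lemma~\ref{lem.projectionup}$(a)$.

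With this, I mimic the argument of Lemma~\ref{lem.projectionup}$(a)$ verbatim: the coercivity of $\cA_h^c$ on $\cK_h$ (Lemma~\ref{lem.propertiesdiscrete}$(f)$) together with $\divc \eta_\bu = 0$ gives $\|\eta_\bu\|^2 \lesssim \cA_h^c(\eta_\bu, \eta_\bu)$; then substituting the differentiated projection identity and reproducing the decomposition $T_1 + T_2$ (now with $\partial_t \bu$ in place of $\bu$, plus the extra contribution $\mathcal{R}$ just handled) produces $\|\eta_\bu\| \lesssim h^{k+1}$, establishing $(a)$. For $(b)$, I would repeat the inf-sup/Fortin style argument following \eqref{eqn.tri.1}--\eqref{eqn.wh*}: choose $\bv_h^\star \in \bV_h$ with $\divc \bv_h^\star = \partial_t p_I - \partial_t \P_p p$ and $\|\bv_h^\star\| \lesssim \|\partial_t p_I - \partial_t \P_p p\|$, test the differentiated identity against $\bv_h^\star$, and use $(a)$ just proved together with the same $T_1$-type bound on $\mathcal{R}$.

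The main obstacle is bookkeeping the coefficient-derivative terms produced by $\partial_t A(c)$: one must verify that the regularity postulated in $(R)$ (specifically $c \in H^1(H^{k+2})\cap L^\infty(W^{k+1,\infty})$ and $\bu \in L^\infty(W^{1,\infty})$, plus piecewise Lipschitz dependence of $A$ and its derivative on $c$) suffices for the hypotheses of Lemma~\ref{lem.aux} to apply to the tensor $\kappa(c) = A'(c)\,\partial_t c$ and vector $\bchi = \bu$. Once this verification is in place the remaining steps are mechanical duplicates of the proof already given.
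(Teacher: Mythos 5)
Your proposal is correct and follows exactly the route the paper intends: the paper's own justification is the one-line remark that the result follows "by differentiation of \eqref{defn.projectiona}--\eqref{defn.projectionb}" and an argument analogous to Lemma~\ref{lem.projectionup}, which is precisely what you carry out (time-differentiate the projection identities, absorb the $A'(c)\,\partial_t c$ coefficient terms via Lemma~\ref{lem.aux} and Lemma~\ref{lem.projectionup}(a), and repeat the kernel-coercivity and inf-sup steps). Your version simply makes explicit the bookkeeping the paper leaves implicit, so no discrepancy to report.
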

\subsection{Energy projection for concentration}
 
%	\begin{lem}\cite[(62)]{Veiga_miscibledisplacement_2021}\label{lem.Pinablainfty}
%	Under the mesh assumption (\textbf{D3}), for all $c \in H^{k+2}(\cT_h)\cap W^{1,\infty}(\cT_h)$,
%	$$\|\bPi_k^{0,K}\nabla \P_c c\|_{0,\infty,K}\lesssim h_K^{-1}\|\bPi_k^{0,K}\nabla \P_c c\|_{0,K}\lesssim h_K^{-1}\|\nabla \P_c c\|_{0,K}\lesssim 1.$$
%\end{lem}

\noindent For fixed $\bu(t) \in \bV$ and $t \in J$, define the projector $\P_c:Z \cap H^2(\O) \to Z_h$ \cite{ncVEM_2016,ncVEM_parabolic} by
\begin{equation}\label{eqn.projectionP}
	\Gamma_{c,h}(\bu(t);\P_cc,z_h)=\Gamma_{c,\pw}(\bu(t);c,z_h)-\cN_h(\bu(t);c,z_h)
\end{equation}
for all $z_h \in Z_h$, where
\begin{align}
	\Gamma_{c,h}(\bu;c_h,z_h)&:=\cD_h^\bu(c_h,z_h)+\Theta_h^\bu(c_h,z_h)+\lambda(c_h,z_h)_h\\
	\Gamma_{c,\pw}(\bu;c,z_h)&:=\cD_{\pw}^\bu(c,z_h)+\Theta_{\pw}^\bu(c,z_h)+\lambda(c,z_h)\\
	\cN_h(\bu;c,z_h)&:=\sum_{e \in \cE_h}\int_e\left(D(\bu)\nabla c \cdot \bn_e \right)\jump{z_h}\ds
\end{align}
with
\begin{align}
	\cD_h^\bu\left(c_h,z_h\right)&:=\sum_{K \in \cT_h}\int_K D(\bu) \bPi_{k}^{0,K}(\nabla c_h)\cdot\bPi_{k}^{0,K}(\nabla z_h)\dx\\
	&\qquad +\sum_{K \in \cT_h} \nu_\cD^K(\bu)S_\cD^K((I-\Pi_{k+1}^{\nabla,K})c_h,(I-\Pi_{k+1}^{\nabla,K})z_h)\\
	\Theta_h^\bu(c_h,z_h)&:=\sum_{K\in \cT_h}\int_K \bu\cdot \bPi_{k}^{0,K}(\nabla c_h)\Pi_{k+1}^{0,K}z_h\dx\label{eqn.Thetahu}\\
%\Theta_h^\bu(c_h,z_h)&:=\half\sum_{K\in \cT_h}[(\bu\cdot \bPi_{k}^{0,K}(\nabla c_h),\Pi_{k+1}^{0,K}z_h)_{0,K}-(\divc \bu c_h,z_h)_h-(\bu,\Pi_{k+1}^{0,K} c_h\cdot\bPi_{k}^{0,K}(\nabla z_h))_{0,K}]\label{eqn.Thetahu}\\
\cD_{\pw}^\bu(\bullet,\bullet)&:=\sum_{K \in \cT_h}\cD^K(\bu;\bullet,\bullet),\qquad \Theta_{\pw}^\bu(\bullet,\bullet):=\sum_{K \in \cT_h}\Theta^K(\bu,\bullet;\bullet)
\end{align}
and $\jump{z_h}$ denotes the jump of $z_h$ across the edge $e$. Here, $\nu_D^K(\bu)=\nu_\cM^K(\phi)(d_m+d_t|\bPi_{0}^{0,K}\bu|)$ and and $\cD^K(\bu,\bullet,\bullet)$ and $\Theta^K(\bu,\bullet,\bullet)$ denotes the piecewise (elementwise) contribution of $\cD(\bullet,\bullet,\bullet)$ and $\Theta(\bullet,\bullet,\bullet)$, respectively. The constant $\lambda>0$ is chosen such that $\Gamma_{c,h}(\bu;\bullet,\bullet)$ is coercive.

%\smallskip
% 
%\noindent Let $\bu \in \bV$ with $\divc \bu \in L^\infty(\Omega)$. Since
%\[\Theta_h^\bu(z_h,z_h)=\frac{-1}{2}(\divc \bu z_h,z_h)_h \fl z_h \in Z_h,\]
%a choice of $\lambda=1+\half\|\divc \bu\|_{\infty}$ and the coercivity of $\cD_h^\bu(z_h,z_h)$ from Lemma~\ref{lem.propertiesdiscrete}.d (with $\bu_h$ replaced by $\bu$) read
%\begin{align}
%	\Gamma_{c,h}(\bu;z_h,z_h)\gtrsim \|\Pi_{k+1}^{0}z_h\|^2+|z_h|_{1,\cT_h}^2 \ge \|\Pi_{0}^{0}z_h\|^2+|z_h|_{1,\cT_h}^2
%\end{align}
%with an application of Pythagorus theorem for $\Pi_{k+1}^{0}z_h-\Pi_0^0 z_h$ and $\Pi_0^0z_h$ in the last step. The \Poincare-Friedrich inequality for piecewise $H^1$ function \cite{Brenner_PFI} provides, for $K \in \cT_h$,
%\begin{align}
%	\|\Pi_{0}^{0}z_h\|_{0,K}^2+|z_h|_{1,K}^2&=\|\bar{z_h}\|_{0,K}^2+|z_h|_{1,K}^2 \gtrsim \|z_h\|_{0,K}^2+|z_h|_{1,K}^2
%\end{align}
%where $\bar{z_h}$ denotes the average of $z_h$ over $K$. A combination of the above two displayed estimates shows the coercivity of $\Gamma_{c,h}(\bu;z_h,z_h)$. 
%%\smallskip
%%
%%\noindent The coercivity of $\cD_h^\bu(z_h,z_h)$ follows from Lemma~\ref{lem.propertiesdiscrete}.d (with $\bu_h$ replaced by $\bu$). An integration by parts and $\bu \in \bV$ leads to
%%\[(\bu\cdot\nabla c_h,z_h)_{h,K}=\half[(\bu\cdot\nabla c_h,z_h)_{h,K}-(\nabla \cdot \bu, c_h z_h)_{h,K}-(c_h,\bu\cdot\nabla z_h)_{h,K}].\] 

\begin{lem}\cite[Lemma 4.1]{ncVEM_2016}\label{lem.cN}
	Let $k\ge 0$, $c \in H^{k+2}(\cT_h)$ and $D(\bu)\nabla c \in (H^{k+1}(\cT_h))^2$. Then,  for all $z_h \in Z_h$,
	$$\cN_h(\bu;c,z_h) \lesssim h^{k+1}\|D(\bu)\nabla c\|_{k+1,\cT_h}|z_h|_{1,\cT_h}.$$
	Moreover, if  $c\bu \in (H^{k+1}(\cT_h))^2$,
	\[\sum_{e \in \cE_h}\int_e\left({c\bu \cdot n_e} \right)\jump{z_h}\ds\lesssim h^{k+1}\|c\bu\|_{k+1}|z_h|_{1,\cT_h}.\]
\end{lem}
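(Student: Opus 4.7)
My plan is to exploit the nonconforming orthogonality $\int_e\jump{z_h}\cdot\bn q\ds=0$ for all $q\in\P_k(e)$ satisfied by each $z_h\in Z_h$, combined with elementwise trace--approximation estimates and a Poincar\'e-type bound for the jump, to obtain the claimed order $h^{k+1}$.

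First, the no-flux boundary condition $D(\bu)\nabla c\cdot \bn=0$ on $\partial\Omega$ (respectively $\bu\cdot \bn=0$ for the second bound) implies that the boundary edges contribute zero to $\cN_h(\bu;c,z_h)$, so the sum effectively runs over interior edges only. For each interior edge $e$ shared by elements $K^\pm$, I would use the nonconforming orthogonality to insert the elementwise $L^2$-projection $\bPi_k^{0,K}(D(\bu)\nabla c)\cdot \bn_e\in\P_k(e)$ for free:
\begin{align}
\int_e(D(\bu)\nabla c\cdot \bn_e)(\jump{z_h}\cdot \bn_e)\ds = \int_e\bigl((I-\bPi_k^{0,K})(D(\bu)\nabla c)\cdot \bn_e\bigr)(\jump{z_h}\cdot \bn_e)\ds.
\end{align}
A Cauchy--Schwarz step on $e$ then splits the estimate into a flux-approximation piece and a jump piece, each handled separately.

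For the flux, a trace inequality applied to $D(\bu)\nabla c-\bPi_k^{0,K}(D(\bu)\nabla c)$ together with Lemma~\ref{lem.approx}(b) yields $\|(I-\bPi_k^{0,K})(D(\bu)\nabla c)\|_{0,e}\lesssim h_K^{k+1/2}|D(\bu)\nabla c|_{k+1,K}$. For the jump, taking $q\equiv 1$ in the nonconforming orthogonality shows that the edge mean of $\jump{z_h}\cdot \bn_e$ vanishes; subtracting the element averages $\Pi_0^{0,K^\pm}z_h$ on each side, using the scaled trace inequality, and invoking the Poincar\'e--Wirtinger inequality on $K^\pm$ yields $\|\jump{z_h}\cdot \bn_e\|_{0,e}\lesssim h_{K^+}^{1/2}|z_h|_{1,K^+}+h_{K^-}^{1/2}|z_h|_{1,K^-}$. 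Multiplying the two estimates produces a factor $h_K^{k+1}$ per edge; summing over $e\in\cE_h$ and a discrete Cauchy--Schwarz concludes the first claim.

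The second inequality is proved by the identical argument with $D(\bu)\nabla c$ replaced by $c\bu$ throughout; the boundary edges again drop out thanks to $\bu\cdot \bn=0$ on $\partial\Omega$. The main technical subtlety is the careful pairing of the powers of $h_K$: the $h_K^{k+1/2}$ from the $\bPi_k^{0,K}$-approximation must be matched by the $h_K^{1/2}$ from the jump bound to obtain the full $h_K^{k+1}$, and the choice of an element-based (as opposed to purely edge-based) polynomial projection is essential so that Lemma~\ref{lem.approx} applies directly on the element $K$.
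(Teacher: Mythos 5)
Your proof is correct and follows essentially the same route as the source the paper cites for this lemma (and as the paper's own inline treatment of $\cN_h$ in the proof of Lemma~\ref{lem.projectionPc}): use the jump orthogonality of $Z_h$ to insert a degree-$k$ polynomial approximation of the flux on each edge, pair the resulting $h^{k+1/2}$ trace--approximation bound with the $h^{1/2}$ bound for the mean-zero jump, and sum. The only cosmetic differences are your use of the elementwise projector $\bPi_k^{0,K}$ instead of the edge projector $\P_k^e$ (equivalent, and it lets Lemma~\ref{lem.approx} apply directly) and your explicit remark that boundary edges drop out via the no-flux condition.
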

\begin{lem}\label{lem.projectionPc}
	The projector operator $\P_c$ in \eqref{eqn.projectionP} is well-defined under the assumption that $\bu \in (L^{\infty}(\O))^2$ for all $t \in J$.
\end{lem}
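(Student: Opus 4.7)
The plan is to reduce well-definedness of $\P_c$ to the coercivity of the discrete bilinear form $\Gamma_{c,h}(\bu;\cdot,\cdot)$ on $Z_h$. Since \eqref{eqn.projectionP} is a square linear system in the finite-dimensional space $Z_h$, once we show that $\Gamma_{c,h}(\bu;z_h,z_h)$ controls a norm on $Z_h$ (so the map is injective), existence and uniqueness of $\P_c c$ follow; alternatively, Lax--Milgram on $Z_h$ equipped with the broken $H^1$ norm gives the result, provided the right-hand side defines a bounded functional on $Z_h$, which follows directly from the regularity assumed on $c$, the $L^\infty$ bound on $\bu$, and Lemma~\ref{lem.cN}.

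The main step is therefore the coercivity estimate for $\Gamma_{c,h}(\bu;\cdot,\cdot) = \cD_h^\bu(\cdot,\cdot) + \Theta_h^\bu(\cdot,\cdot) + \lambda(\cdot,\cdot)_h$. I would proceed as follows. First, for the diffusive part $\cD_h^\bu$, the stabilization property \eqref{eqn.SD} together with Lemma~\ref{lem.propertiescontinuous}(i) applied elementwise (using $\bu \in L^\infty$) yields $\cD_h^\bu(z_h,z_h) \gtrsim |z_h|_{1,\cT_h}^2$, analogously to Lemma~\ref{lem.propertiesdiscrete}(d). Next, for the convective part, the continuity of the $L^2$ projectors, the hypothesis $\bu \in (L^\infty(\Omega))^2$, and a Cauchy--Schwarz estimate give
\[
|\Theta_h^\bu(z_h,z_h)| \le \|\bu\|_{0,\infty,\Omega}\, |z_h|_{1,\cT_h}\, \|z_h\|,
\]
and Young's inequality then splits this as $\tfrac{\epsilon}{2}|z_h|_{1,\cT_h}^2 + \tfrac{1}{2\epsilon}\|\bu\|_{0,\infty,\Omega}^2 \|z_h\|^2$. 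Finally, the stabilization bound \eqref{eqn.SM} shows $(z_h,z_h)_h \gtrsim \|z_h\|^2$. Choosing $\epsilon$ small enough that the diffusive coercivity absorbs the $H^1$-seminorm loss, and then $\lambda$ large enough (depending on $\|\bu\|_{0,\infty,\Omega}$) to absorb the residual $L^2$ term, one obtains
\[
\Gamma_{c,h}(\bu;z_h,z_h) \gtrsim |z_h|_{1,\cT_h}^2 + \|z_h\|^2,
\]
which is the required coercivity on $Z_h$ in the broken $H^1$ norm.

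The main obstacle, though mild here, is the treatment of the convective form $\Theta_h^\bu$: without the $L^\infty$ assumption on $\bu$ one could not bound it by $|z_h|_{1,\cT_h}\|z_h\|$ uniformly, and the Young-splitting trick needed to absorb it into the diffusive and mass parts would fail. This is exactly where the hypothesis of the lemma is used and why the constant $\lambda$ must depend on $\|\bu\|_{L^\infty}$. With coercivity in hand, the discrete Lax--Milgram lemma delivers existence and uniqueness of $\P_c c \in Z_h$ solving \eqref{eqn.projectionP}, completing the proof.
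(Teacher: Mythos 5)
Your overall strategy is the same as the paper's: well-posedness of \eqref{eqn.projectionP} via Lax--Milgram on the finite-dimensional space $Z_h$ with the broken $H^1$ norm, checking continuity of $\Gamma_{c,h}(\bu;\cdot,\cdot)$ and of the right-hand side functional, and getting coercivity by choosing $\lambda$ appropriately (the paper merely asserts this last point, while you try to spell it out). Two of your steps, however, do not go through as written. First, the claim that \eqref{eqn.SM} gives $(z_h,z_h)_h\gtrsim\|z_h\|^2$ is wrong: the pairing $(\cdot,\cdot)_h$ entering $\Gamma_{c,h}$ carries no stabilization, it is $\sum_{K\in\cT_h}\int_K\Pi_{k+1}^{0,K}c_h\,\Pi_{k+1}^{0,K}z_h\dx$, and \eqref{eqn.SM} only controls functions in $\ker(\Pi_{k+1}^{0,K})$; hence $(z_h,z_h)_h=\|\Pi_{k+1}^{0}z_h\|^2$, which by itself does not dominate $\|z_h\|^2$. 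The coercivity conclusion can be repaired, e.g.\ by the Poincar\'e-type bound $\|(I-\Pi_{k+1}^{0,K})z_h\|_{0,K}\lesssim h_K|z_h|_{1,K}$, so that $\|z_h\|^2\lesssim(z_h,z_h)_h+h^2|z_h|_{1,\cT_h}^2$ and the residual seminorm piece is absorbed by the diffusive part, but the justification you give is not the right one.

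Second, the boundedness of the right-hand side functional cannot be taken from Lemma~\ref{lem.cN}: that lemma assumes $D(\bu)\nabla c\in (H^{k+1}(\cT_h))^2$ (and $c\in H^{k+2}(\cT_h)$), regularity that is not available under the hypotheses of this lemma, where $\bu$ is only in $(L^{\infty}(\O))^2$ and $c\in Z\cap H^2(\O)$. This is precisely the point the paper's proof addresses directly: it bounds $\cN_h(\bu;c,z_h)$ by inserting the edge $L^2$ projection $\P_k^e$ of $D(\bu)\nabla c$ (admissible because $z_h\in H^{1,\mathrm{nc}}(\cT_h;k)$ has vanishing jump moments against $\P_k(e)$) and subtracting $\P_0^e$ of the jump, which together with trace and approximation estimates yields $\cN_h(\bu;c,z_h)\lesssim\|\bu\|_{0,\infty,\O}|c|_{1,\cT_h}\|z_h\|_{1,\cT_h}$ using only $\bu\in(L^\infty(\O))^2$. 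With these two repairs your argument coincides with the paper's proof.
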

\begin{proof}
	To apply Lax-Milgram Lemma, the continuity of the right-hand side of \eqref{eqn.projectionP} with respect to $\|\bullet\|_{1,\cT_h}$ is proved first. For $\bu \in L^\infty(\O)$, $c \in Z$, and $z_h \in Z_h$,
	%\begin{align}
	%\cD_{\pw}^\bu(c,z_h)&=\sum_{K \in \cT_h}\cD^K(\bu;c,z_h)=\sum_{K \in \cT_h}(D(\bu)\nabla c,\nabla z_h)_{0,K}.
	%\end{align}
	the generalised \Holder inequality and the definition of $D(\bu)$ in \eqref{defn.D} (also see Lemma \ref{lem.propertiescontinuous}.g) imply
	\begin{align}
		\cD_{\pw}^\bu(c,z_h)&=\sum_{K \in \cT_h}(D(\bu)\nabla c,\nabla z_h)_{0,K} \\
		&\le \sum_{K \in \cT_h}\|D(\bu)\|_{0,\infty,K}\|\nabla c\|_{0,K}\|\nabla z_h\|_{0,K}\lesssim \|\bu\|_{0,\infty,\O}\|c\|_{1,\cT_h}\|z_h\|_{1,\cT_h}.\label{eqn.Dpw}
	\end{align}
	The remaining two terms on the definition of $\Gamma_{c,\pw}(\bu;\bullet,\bullet)$ is estimated using a generalised \Holder inequality as
	\begin{align}
	\Theta_{\pw}^\bu(c,z_h)+\lambda (c,z_h)&
		\lesssim \|\bu\|_{0,\infty,\O}|c|_{1,\cT_h}\|z_h\|+\lambda\|c\|\|z_h\|.\label{eqn.thethapw}
	\end{align}
The definition of $H^1(\cT_h;k)$ and $L^2(e)$ projection, Cauhy-Schwarz inequality, and approximation properties $\|D(\bu)\nabla c  -\P_k^e(D(\bu)\nabla c)\|_{0,e}\lesssim h^{-1/2}\|D(\bu)\nabla c\|$ and $\|\jump{z_h}-\P_0^e(\jump{z_h})\|_{0,e}\lesssim h^{1/2}|z_h|_{1,\cT_h}$ prove
\begin{align*}
	\cN_h(\bu;c,z_h)&=\sum_{e \in \cE_h}\int_e\left(D(\bu)\nabla c  -\P_k^e(D(\bu)\nabla c)\right)\cdot \bn_e\jump{z_h}\ds\\%-\half\sum_{e \in \cE_h}\int_e\left(\nabla c\bu-\P_k^e(c \bu)\right)\cdot \bn_e\jump{z_h}\ds\\
	&=\sum_{e \in \cE_h}\int_e\left(D(\bu)\nabla c  -\P_k^e(D(\bu)\nabla c)\right)\cdot \bn_e(\jump{z_h}-\P_0^e(\jump{z_h}))\ds\\
	&\lesssim \|\bu\|_{0,\infty,\O}|c|_{1,\cT_h}\|z_h\|_{1,\cT_h}.
%	&\qquad -\half\sum_{e \in \cE_h}\int_e\left(\nabla c\bu-\P_k^e(c \bu)\right)\cdot \bn_e(\jump{z_h}-\P_0^e(\jump{z_h}))\ds \lesssim \|\bu\|_{\infty,\O}|c|_{1,\cT_h}\|z_h\|_{1,\cT_h}.
\end{align*}
This, \eqref{eqn.thethapw}, and \eqref{eqn.Dpw} shows that $\Gamma_{c,\pw}(\bu;c,\bullet)$ is a continuous functional with respect to $\|z_h\|_{1,\cT_h}$.

	\smallskip
	
	\noindent  The continuity of $\cD_h^\bu(\bullet,\bullet)$ is analogue to the one in Lemma~\ref{lem.propertiesdiscrete}.c. A generalised \Holder inequality and the stability of the $L^2$ projector shows
	\[	\Theta_h^\bu(c_h,z_h)_h+\lambda(c_h,z_h)_h
	\lesssim \|\bu\|_{0,\infty,\O}|c_h|_{1,\cT_h}\|z_h\|+\lambda\|c\|\|z_h\|.\]
A combination of these estimates shows the continuity of $\Gamma_{c,h}(\bu;\bullet,\bullet)$. An appropriate choice of $\lambda$ implies the coercivity of $\Gamma_{c,h}(\bu;z_h,z_h)$. The result then follows from the Lax-Milgram lemma.
\end{proof}
\noindent Given $c \in H^{k+2}(\cT_h)$, there exists an interpolant $c_I \in Z_h$ and a piecewise $\P_{k+1}$ polynomial $c_{\pi}$ such that \cite{ncVEM_2016,ncVEM_parabolic}
\begin{align}
	&\|c-c_I\| \lesssim h^{k+2}\|c\|_{k+2,\cT_h}, \quad \|c-c_I\|_{1,\cT_h} \lesssim h^{k+1}\|c\|_{k+2,\cT_h}\label{eqn.cI}\\
	&\|c-c_{\pi}\| \lesssim h^{k+2}\|c\|_{k+2,\cT_h}, \quad \|c-c_{\pi}\|_{1,\cT_h} \lesssim h^{k+1}\|c\|_{k+2,\cT_h}.\label{eqn.cpi}
\end{align}
Also, note that
\begin{equation}\label{eqn.cpiproperties}
	\Pi_{k+1}^{0}c_\pi=c_\pi, \quad \Pi_{k+1}^{\nabla}c_\pi=c_\pi.
\end{equation}
\begin{lem}\label{lem.cPcerror}
	Assume that $\bu \in (H^{k+1}(\cT_h)\cap W^{1,\infty}(\O))^2,$ $c \in H^{k+2}(\cT_h) \cap W^{1,\infty}(\cT_h)$, $\bu \cdot \nabla c \in H^{k+1}(\cT_h)$, $D(\bu) \in (W^{1,\infty}(\Omega))^2$, and $D(\bu)\nabla c \in (H^{k+1}(\cT_h))^2$ for all $t \in J$. Then 
	\begin{align*}
		(a)\,	\|c-\P_c c\|_{1,\cT_h} \lesssim h^{k+1}, \qquad & (b)\,\|c-\P_c c\| \lesssim h^{k+2},
	\end{align*}
	where $\P_c$ is defined in \eqref{eqn.projectionP}.
\end{lem}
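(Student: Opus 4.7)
The plan is to prove (a) by an energy argument based on the coercivity of $\Gamma_{c,h}(\bu;\cdot,\cdot)$ and the polynomial consistency of the discrete forms, and then upgrade to (b) by an Aubin--Nitsche duality argument tailored to the non-conforming VEM setting. In both steps the piecewise polynomial $c_\pi$ of degree $k+1$ from \eqref{eqn.cpi}, whose projection properties \eqref{eqn.cpiproperties} annihilate every stabilisation term, plays the standard role of a comparison polynomial.

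For (a), I split $c - \P_c c = (c - c_I) + \eta_h$ with $\eta_h := c_I - \P_c c \in Z_h$. The piece $c - c_I$ is controlled by \eqref{eqn.cI}. The coercivity of $\Gamma_{c,h}(\bu;\cdot,\cdot)$ (established in Lemma~\ref{lem.projectionPc}) together with the defining relation \eqref{eqn.projectionP} give
\[
\|\eta_h\|_{1,\cT_h}^2 \lesssim \Gamma_{c,h}(\bu; \eta_h, \eta_h) = \Gamma_{c,h}(\bu; c_I, \eta_h) - \Gamma_{c,\pw}(\bu; c, \eta_h) + \cN_h(\bu; c, \eta_h).
\]
Inserting $c_\pi$, I split the right-hand side as
\[
\Gamma_{c,h}(\bu; c_I - c_\pi, \eta_h) + \bigl[\Gamma_{c,h}(\bu; c_\pi, \eta_h) - \Gamma_{c,\pw}(\bu; c_\pi, \eta_h)\bigr] + \Gamma_{c,\pw}(\bu; c_\pi - c, \eta_h) + \cN_h(\bu; c, \eta_h),
\]
and estimate each piece: the first and third by continuity of $\Gamma_{c,h}$, $\Gamma_{c,\pw}$ (cf.\ Lemmas~\ref{lem.propertiescontinuous} and \ref{lem.propertiesdiscrete}) combined with \eqref{eqn.cI}--\eqref{eqn.cpi}, each of order $h^{k+1}\|\eta_h\|_{1,\cT_h}$; the polynomial-consistency bracket by exploiting \eqref{eqn.cpiproperties} (which kills all stabilisations at $c_\pi$) and applying Lemma~\ref{lem.aux} termwise to $D(\bu)\nabla c_\pi$ and to $\bu c_\pi$, again of order $h^{k+1}\|\eta_h\|_{1,\cT_h}$; and the jump term by Lemma~\ref{lem.cN}. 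Dividing through and using the triangle inequality yields (a).

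For (b), I carry out a duality argument. Let $\psi \in H^1(\Omega)\cap H^2(\Omega)$ solve the adjoint problem
\[
\Gamma_{c,\pw}(\bu; v, \psi) = (v, c - \P_c c) \quad \fl v \in H^1(\Omega),
\]
with $H^2$-regularity $\|\psi\|_2 \lesssim \|c-\P_c c\|$. Testing with $v = c - \P_c c$, inserting the non-conforming interpolant $\psi_I \in Z_h$ of $\psi$, and invoking \eqref{eqn.projectionP} to replace $\Gamma_{c,\pw}(\bu; c, \psi_I)$ produces
\[
\|c - \P_c c\|^2 = \Gamma_{c,\pw}(\bu; c - \P_c c, \psi - \psi_I) + \cN_h(\bu; c, \psi_I) + \bigl[\Gamma_{c,h}(\bu; \P_c c, \psi_I) - \Gamma_{c,\pw}(\bu; \P_c c, \psi_I)\bigr].
\]
The first term is $\lesssim \|c-\P_c c\|_{1,\cT_h}\, h\|\psi\|_2 \lesssim h^{k+2}\|c-\P_c c\|$ by (a) and \eqref{eqn.cI}. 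The jump term is upgraded from $O(h^{k+1})$ to $O(h^{k+2})$ by writing $\jump{\psi_I} = \jump{\psi_I - \psi}$ (using continuity of $\psi$) and combining the non-conformity identity $\int_e \jump{z_h}\,p_k\,\ds = 0$ with standard trace/interpolation estimates to extract an extra $h$-factor on top of Lemma~\ref{lem.cN}. The consistency bracket is split once more via $\P_c c = c_\pi + (\P_c c - c_\pi)$; for the polynomial part I use the $L^2$-orthogonality of $(I-\bPi_k^0)\nabla\psi_I$ to $(\P_k)^2$ together with $\|(I-\bPi_k^0)\nabla\psi\|_K \lesssim h\|\psi\|_{2,K}$, and for the remainder I use (a) and \eqref{eqn.cpi} to gain a first $h^{k+1}$ factor and the same orthogonality trick to gain the extra $h$. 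Absorbing $\|\psi\|_2 \lesssim \|c-\P_c c\|$ yields (b).

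The main obstacle is the duality step: the form $\Gamma_{c,\pw}$ is non-symmetric (through the convective contribution $\Theta_{\pw}^\bu$) and the space $Z_h$ is non-conforming, so every factor multiplying $\psi_I$ must be squeezed for an extra power of $h$ in order to convert the $O(h^{k+1})$ bound of part~(a) into $O(h^{k+2})$. The two essential tricks are the non-conformity identity combined with $\psi\in H^2$ for the jump term, and the repeated use of $\Pi_{k+1}^{0,K}c_\pi = c_\pi$ and $\Pi_{k+1}^{\nabla,K}c_\pi = c_\pi$ together with the orthogonality of the projected gradients to isolate polynomial-consistency errors that Lemma~\ref{lem.aux} can handle.
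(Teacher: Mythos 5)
Your proof of part (a) is essentially the paper's argument: the same triangle-inequality split through $c_I$, coercivity of $\Gamma_{c,h}(\bu;\cdot,\cdot)$, insertion of the polynomial $c_\pi$ so that \eqref{eqn.cpiproperties} kills the stabilisations, Lemma~\ref{lem.aux} for the polynomial-consistency terms, and Lemma~\ref{lem.cN} for the jump term; the slightly different ordering of the splitting is immaterial.

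In part (b), however, your error-representation identity is incomplete, and this is a genuine gap. You test the weak adjoint problem with $v=c-\P_c c$, but $\P_c c\in Z_h$ is non-conforming, so $c-\P_c c\notin H^1(\O)$ and this testing step is not legitimate. The correct route (the one the paper takes) is to pair $c-\P_c c$ with the strong form $-\divc(D(\bu)\nabla\psi+\bu\psi)+\lambda\psi$ and integrate by parts elementwise; this produces, in addition to the terms you list, the non-conformity terms
$-\cN_h(\bu;\psi,c-\P_c c)$ and $\widehat{\cN}_h(\bu;\psi,\P_c c-c)=\sum_{e\in\cE_h}\int_e(\psi\,\bu\cdot\bn_e)\jump{\P_c c-c}\ds$,
i.e.\ the dual diffusive and convective fluxes tested against the jumps of the non-conforming function $\P_c c$. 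These do not vanish and are not covered by your treatment of $\cN_h(\bu;c,\psi_I)$ (where the jump is that of $\psi_I$, not of $\P_c c$). They are precisely the terms the paper keeps inside $B_{2}$ and as $B_3$, and they must be estimated — which can be done with Lemma~\ref{lem.cN}-type bounds using $\psi\in H^2(\O)$, part (a) for $|c-\P_c c|_{1,\cT_h}$, and the regularity of $D(\bu)$ and $\bu$, each giving $O(h^{k+2}\|c-\P_c c\|)$. So your duality strategy is the right one and is salvageable, but as written the identity omits two consistency terms whose estimation is an essential part of the non-conforming analysis; the remaining consistency bracket you treat by orthogonality/projection arguments is in the spirit of the estimates the paper imports from the VEM literature and is acceptable once these missing terms are restored.
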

\begin{proof}
	The triangle inequality and \eqref{eqn.cI} lead to
	\begin{equation}\label{eqn.tri}
		\|c-\P_c c\|_{1,\cT_h} \le \|c-c_I\|_{1,\cT_h}+\|c_I-\P_c c\|_{1,\cT_h} \lesssim h^{k+1}+\|c_I-\P_c c\|_{1,\cT_h}.
	\end{equation}
	For a fixed time $t$, the coercivity of $\Gamma_{c,h}(\bu;\bullet,\bullet)$ with respect to $\|\bullet\|_{1,\cT_h}$ and \eqref{eqn.projectionP} show, for $z_h=\P_c c-c_I \in Z_h$,
	\begin{align}
		\|z_h\|_{1,\cT_h}^2& \lesssim \Gamma_{c,h}(\bu;z_h,z_h)=\Gamma_{c,h}(\bu;\P_c c,z_h)-\Gamma_{c,h}(\bu;c_I,z_h)\nonumber\\
		&=\Gamma_{c,\pw}(\bu;c,z_h)-\Gamma_{c,h}(\bu;c_I,z_h)-\cN_h(\bu;c,z_h) \nonumber\\
		&=(\Gamma_{c,\pw}(\bu;c,z_h)-\Gamma_{c,h}(\bu;c_{\pi},z_h))+\Gamma_{c,h}(\bu;c_{\pi}-c_I,z_h)-\cN_h(\bu;c,z_h)\nonumber\\
		&=(\cD_{\pw}^\bu(c,z_h)-\cD_h^\bu(c_{\pi},z_h))+(\Theta_{\pw} ^\bu(c,z_h)-\Theta_h^\bu(c_{\pi},z_h))\nonumber\\
		&\quad + \lambda((c,z_h)-(c_{\pi},z_h)_h)+\Gamma_{c,h}(\bu;c_{\pi}-c_I,z_h) -\cN_h(\bu;c,z_h)=: \sum_{i=1}^5A_i \label{eqn.a1a2a3a4a5}
	\end{align}
	with the definition of $\Gamma_{c,\pw}(\bu;\bullet,\bullet)$ and $\Gamma_{c,h}(\bu;\bullet,\bullet)$ in the last step. 
	%The continuity of $\Gamma_{c,\pw}(\bu;\bullet,\bullet)$ from Lemma \ref{lem.projectionPc} and the approximation property \eqref{eqn.cpi} imply
	%	\begin{align}
		%	A_1&=\Gamma_{c,\pw}(\bu;c,z_h)-\Gamma_{c,\pw}(\bu;c_{\pi},z_h)\nonumber\\
		%	&=\cD_{\pw}(\bu;c-c_{\pi},z_h)+\Theta_{\pw} (\bu;c-c_{\pi},z_h)+(c-c_{\pi},z_h)\nonumber\\
		%	&\lesssim \|\bu\|_{\infty,\O}\|c-c_{\pi}\|_{1,\cT_h}\|z_h\|_{1,\cT_h} \lesssim h^{k+1}\|z_h\|_{1,\cT_h}.\label{eqn.A1}
		%	\end{align}
	%Since $\Pi_{k+1}^{0}c_\pi=c_\pi$,
	%\begin{align}
	%A_1&=\Gamma_{c,\pw}(\bu;c,z_h)-\Gamma_{c,h}(\bu;c_\pi,z_h)\nonumber\\
	%&=(\cD_{\pw}(\bu;c,z_h)-\cD_h(\bu;c_{\pi},z_h))+(\Theta_{\pw} (\bu;c,z_h)-\Theta_h(\bu;c_{\pi},z_h))+ ((c,z_h)-(c_{\pi},z_h)_h)\nonumber\\
	%&:=A_{1,1}+A_{1,2}+A_{1,3}.\label{eqn.A1}
	%\end{align}	
	An introduction of $\bPi_k^{0,K}(\nabla c)$, Lemma \ref{lem.aux} with the substitutions $\sigma=\bu$, $\widehat{\sigma}=0$, $\bchi=\nabla c$, and $\bpsi=\nabla z_h$, generalised \Holder inequality, \eqref{eqn.cpi}, and the approximation property $\|\bu-\Pi_k^{0,K}\bu\| \lesssim h^{k+1}$ leads to 
	\begin{align}
		A_{1}&=\sum_{K \in \cT_h} \big((D(\bu)\nabla c,\nabla z_h)_{0,K}-(D(\bu)\bPi_k^{0,K}(\nabla c_{\pi}),\bPi_k^{0,K}(\nabla z_h))_{0,K}\big)\\
		&=\sum_{K \in \cT_h} \big((D(\bu)\nabla c,\nabla z_h)_{0,K}-(D(\bPi_k^{0,K}\bu)\bPi_k^{0,K}(\nabla c),\bPi_k^{0,K}(\nabla z_h))_{0,K}\big)\\
		&\qquad +\sum_{K \in \cT_h} (D(\bPi_k^{0,K}\bu)\bPi_k^{0,K}(\nabla (c-c_{\pi})),\bPi_k^{0,K}(\nabla z_h))_{0,K}\\
		&\qquad +\sum_{K \in \cT_h} (D(\bPi_k^{0,K}\bu-\bu)\bPi_k^{0,K}(\nabla c_{\pi}),\bPi_k^{0,K}(\nabla z_h))_{0,K}\lesssim h^{k+1}|z_h|_{1,\cT_h}.\label{eqn.a1}
	\end{align}
A simple manipulation yields
\begin{align}
	A_{2}%&=\Theta_{\pw} (\bu;c,z_h)-\Theta_h(\bu;c_{\pi},z_h)\\
%	&=\sum_{K \in \cT_h}((\bu\cdot\nabla c,z_h)_{0,K}-(\bu\cdot\nabla c_\pi,z_h)_h)\\%-((\divc \bu c,z_h)_{0,K}-(\divc \bu c_\pi,z_h)_h)\\
%	&\qquad -((\bu,c\nabla z_h)_{0,K}-(\bu,c_\pi\nabla z_h)_h)]\\
	&= \sum_{K \in \cT_h}[((\bu\cdot\nabla c,z_h)_{0,K}-(\bu\cdot\nabla c,z_h)_h)+(\bu\cdot\nabla (c-c_\pi),z_h)_h]:=A_{2,1}+A_{2,2}.\label{eqn.a2}
\end{align}
%An introduction of $\Pi_k^{0,K}\bu$ for the first and third term on the right-hand side of \eqref{eqn.a2}, the approximation property $\|\bu-\Pi_k^{0,K}\bu\| \lesssim h^{k+1}$ 
Lemma~\ref{lem.aux}, the generalised \Holder inequality, and the continuity property of $L^2$ projectors $\Pi_{k+1}^{0,K}$ and $\bPi_k^{0,K}$ provide
\begin{equation}
	A_{2,1} \lesssim h^{k+1}\|z_h\|_{1,\cT_h}. \label{eqn.a21a22a23}
\end{equation}
The generalised \Holder inequality, the stability of $L^2$ projectors, and \eqref{eqn.cpi} read
\begin{equation}
	A_{2,2}\lesssim h^{k+1}\|z_h\|_{1,\cT_h}. \label{eqn.a24a25a26}
\end{equation}
A combination of \eqref{eqn.a21a22a23}-\eqref{eqn.a24a25a26} in \eqref{eqn.a2} shows 
\begin{equation}
	A_2 \lesssim h^{k+1}\|z_h\|_{1,\cT_h}.
\end{equation}
	Since $\Pi_{k+1}^{0}c_\pi=c_\pi$, $(c_{\pi},z_h)_h=(c_{\pi},z_h)$. This, \Holder inequality, and \eqref{eqn.cpi} result in
	\begin{align}
		A_{3}&=\lambda(c-c_{\pi},z_h) \lesssim h^{k+1}\|z_h\|_{1,\cT_h}.\label{eqn.a3}
	\end{align}
	The continuity of $\Gamma_{c,h}(\bu;\bullet,\bullet)$ from Lemma \ref{lem.projectionPc}, a triangle inequality with $c$ and the approximation properties in \eqref{eqn.cI} and \eqref{eqn.cpi} provide
	\begin{align}
		A_4&\lesssim \|\bu\|_{\infty,\O}\|c_{\pi}-c_I\|_{1,\cT_h}\|z_h\|_{1,\cT_h}\lesssim h^{k+1}\|z_h\|_{1,\cT_h}.\label{eqn.a4}
	\end{align}
	A substitution of the estimates for $A_1-A_4$ in \eqref{eqn.a1a2a3a4a5} and Lemma~\ref{lem.cN} for $A_5$ leads to $\|\P_c c-c_I\|_{1,\cT} \lesssim h^{k+1}$. This and \eqref{eqn.tri} concludes the proof of first estimate $(a)$.
	
	\medskip
	
	%Define, for $\phi \in L^2(K)$,
	%\begin{equation}\label{eqn.errorE}
	%E_K^k(\phi)=\|\phi-\Pi_k^{0,K}\phi\|.
	%\end{equation}
	
	\noindent For the $L^2$ estimate, we follow the Aubin-Nitsche duality arguments. Let $\psi \in H^1(\O)$ solves the following adjoint problem.
	\begin{align}\label{eqn.dual}
		-\divc(D(\bu)\nabla \psi+\bu\psi)+\lambda\psi&=c-\P_c c\mbox{ in } \O, \quad (D(\bu)\nabla \psi+\bu \psi)\cdot \bn=0 \mbox{ in } \partial \O.
	\end{align}
	Since $\O$ is convex, $\psi \in H^2(\O)$ and 
	\begin{equation}\label{eqn.aprioridual}
		\|\psi\|_2 \lesssim \|c-\P_c c\|.
	\end{equation}
	The dual problem \eqref{eqn.dual}
	%integration by parts, the relation \[(\bu\cdot\nabla \psi,z_h)_{0,K}=(z_h\bu,\nabla \psi)_{0,K}=\half[(\bu\cdot\nabla \psi,z_h)_{0,K}-(\nabla \cdot \bu, \psi z_h)_{0,K}-(\psi,\bu\cdot\nabla z_h)_{0,K}+(\psi \bu \cdot n_e,z_h)_{0,\partial K}]\] for $z_h \in Z_h$, 
	and introduction of the interpolant $\psi_I$ of $\psi$ show
	\begin{align}
		\|c-\P_c c\|^2%&=(c-\P_c c,c-\P_c c)\nonumber\\
		&=(c-\P_c c,-\divc(D(\bu)\nabla \psi+\bu \psi)+\lambda\psi)\\
	%	&=(\P_c c-c, \bu \cdot \nabla\psi)-(c-\P_c c,\divc(D(\bu)\nabla \psi))+\lambda(c-\P_c c,\psi)\\
%	&=\half[(\P_c c-c,\bu \cdot \nabla \psi)-\sum_{K\in \cT_h}(\psi, \bu \cdot \nabla(\P_c c-c))_{0,K}-(\divc \bu, (\P_c c-c)\psi)]\\
%	&\qquad +\cD_{\pw}^\bu(\psi,c-\P_c c)-\cN_h(\bu;\psi,c-\P_c c)+(c-\P_c c,\lambda\psi)\\%\sum_{K \in \cT_h}\int_{\partial K}(D(\bu)\nabla \psi\cdot n_K)(c-\P_c c)\ds\\
%%	&=\half[(\P_c c-c,\bu \cdot \nabla \psi)-\sum_{K\in \cT_h}(\psi, \bu \cdot \nabla(\P_c c-c))_{0,K}-((q^++q^-), (\P_c c-c)\psi)]\\
%%	&\qquad +\cD_{\pw}^\bu(\psi,c-\P_c c)-\cN_h(\bu;\psi,c-\P_c c)+(c-\P_c c,\psi)\\
%	&= \cD_{\pw}^\bu(c-\P_c c,\psi)+\Theta_{\pw}^\bu(c-\P_c c,\psi)+\lambda(c-\P_c c,\psi)-\cN_h(\bu;\psi,c-\P_c c)\\
	&= (\cD_{\pw}^\bu(c-\P_c c,\psi-\psi_I)+\Theta_{\pw}^\bu(c-\P_c c,\psi-\psi_I)+\lambda(c-\P_c c,\psi-\psi_I))\\
	&\quad+(\cD_{\pw}^\bu(c-\P_c c,\psi_I)+\Theta_{\pw}^\bu(c-\P_c c,\psi_I)+\lambda(c-\P_c c,\psi_I) -\cN_h(\bu;\psi,c-\P_c c))\\
	&\quad +\widehat{\cN}_h(\bu;\psi,\P_c c-c)=:B_1+B_2+B_3\label{eqn.b1b2}
	\end{align}
where $$ \widehat{\cN}_h(\bu;\psi,z_h)=\sum_{e \in \cE_h}\int_e\left({\psi\bu \cdot n_e} \right)\jump{z_h}\ds.$$
A generalised \Holder inequality, the estimate $(a)$, \eqref{eqn.cI} for $\psi \in H^2(\O)$ and \eqref{eqn.aprioridual} provide
	\begin{equation}\label{eqn.b1}
		B_1 \lesssim h^{k+2}\|\psi\|_2 \lesssim h^{k+2}\|c-\P_c c\|.
	\end{equation}
	The definition of projection in \eqref{eqn.projectionP} implies
	\begin{align}
		B_2%&=\cD_{\pw}^\bu(c,\psi_I)+\Theta_{\pw}^\bu(c,\psi_I)+\lambda(c,\psi_I)-\cD_{\pw}^\bu(\P_c c,\psi_I)-\Theta_{\pw}^\bu(\P_c c,\psi_I)-\lambda(\P_c c,\psi_I)\\
	%	&\qquad  -\cN_h(\bu;\psi,c-\P_c c)\\
		&=\Gamma_{c,\pw}(\bu;c,\psi_I)-\cD_{\pw}^\bu(\P_c c,\psi_I)-\Theta_{\pw}^\bu(\P_c c,\psi_I)-\lambda(\P_c c,\psi_I) -\cN_h(\bu;\psi,c-\P_c c)\\
		&=\Gamma_{c,h}(\bu;\P_c c,\psi_I)+\cN_h(\bu;c,\psi_I)-\cD_{\pw}^\bu(\P_c c,\psi_I)-\Theta_{\pw}^\bu(\P_c c,\psi_I)-\lambda(\P_c c,\psi_I)\\
		&\qquad  -\cN_h(\bu;\psi,c-\P_c c)\\
		&=(\cD_h^\bu(\P_c c,\psi_I)-\cD_{\pw}^\bu(\P_c c,\psi_I))+(\Theta_h^\bu(\P_c c,\psi_I)-\Theta_{\pw}^\bu(\P_c c,\psi_I))\\
		&\qquad+\lambda((\P_c c,\psi_I)_h-(\P_c c,\psi_I)) +(\cN_h(\bu;c,\psi_I) -\cN_h(\bu;\psi,c-\P_c c))\\
		&=:B_{2,1}+B_{2,2}+B_{2,3}+B_{2,4}.\label{eqn.b2}
	\end{align}
	Arguments analogous to the proof of \cite[(5.48),(5.49)]{VEM_general_2016} with $p_h=\P_c c$ and $p=c$ leads to
	\begin{align}
		B_{2,1}&+B_{2,2}+B_{3,2} \lesssim h^{k+2}\|c-\P_c c\|.
	\end{align}
	Since $\psi \in H^2(\O)$, $\cN_h(\bu;c,\psi)=0$. This, Lemma~\ref{lem.cN}, \eqref{eqn.cI}, $(a)$, and \eqref{eqn.aprioridual} show
	\begin{align}
		B_{2,4}&=\cN_h(\bu;c,\psi-\psi_I) -\cN_h(\bu;\psi,c-\P_c c) \\
		&\lesssim h^{k+1}\|D(\bu)\nabla c\|_{k+1}|\psi-\psi_I|_{1,h}+h\|D(\bu)\psi\|_1|c-\P_c c|_{1,h}\\
		&\lesssim h^{k+2}\|\psi\|_2 \lesssim h^{k+2}\|c-\P_c c\|.
	\end{align}
	A combination of the above estimates in \eqref{eqn.b2} leads to $B_2 \lesssim h^{k+2}\|c-\P_c c\|$. Analogous arguments show $B_3 \lesssim h^{k+2}\|c-\P_c c\|$. These estimates and \eqref{eqn.b1} in \eqref{eqn.b1b2} concludes the proof.
\end{proof}
Differentiation of \eqref{eqn.projectionP} with respect to time and analogous arguments as in Lemma~\ref{lem.cPcerror} yield the following result.
\begin{cor}\label{cor.ctPcterror}
	%	Assume that $\bu_t \in L^2(0,T;(H^{k+1}(\cT_h)\cap L^\infty(\O))^2),$ $c_t \in L^2(0,T;H^{k+2}(\cT_h) \cap W^{1,\infty}(\cT_h))$, $q^+,q^- \in L^\infty(\O)$, $(q^++q^-)c \in (H^{k+1}(\cT_h))^2$, $\bu \cdot \nabla c \in H^{k+1}(\cT_h)$, and $D(\bu)\nabla c \in (H^{k+1}(\cT_h))^2$ for all $t \in J$. Then 
	Provided that the continuous data and solution are sufficiently regular in space and time, it holds
	\begin{align*}
		\displaystyle	(a)\,	\left\|\frac{\partial}{\partial t}(c-\P_c c)\right\|_{1,\cT_h} \le h^{k+1}\xi_{1,t}, \qquad & (b)\,\left\|\frac{\partial}{\partial t}(c-\P_c c)\right\| \le h^{k+2}\xi_{0,t},
	\end{align*}
	where the constants $\xi_{0,t},\xi_{1,t}>0$ are independent of $h$.
\end{cor}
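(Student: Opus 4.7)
The plan is to differentiate the defining relation \eqref{eqn.projectionP} of $\P_c$ with respect to $t$ and then reproduce, for $\partial_t\P_c c$, the structure of the argument used in Lemma~\ref{lem.cPcerror}. Since $\Gamma_{c,h}(\bu;\cdot,\cdot)$ and $\Gamma_{c,\pw}(\bu;\cdot,\cdot)$ depend on $\bu$ only through the Lipschitz maps $D(\bu)$, $\nu_\cD^K(\bu)$, and the convection coefficient, and are linear in the other two arguments, the product rule applied to \eqref{eqn.projectionP} yields, for every $z_h\in Z_h$,
\begin{equation}
\Gamma_{c,h}(\bu;\partial_t\P_c c,z_h)
=\Gamma_{c,\pw}(\bu;\partial_t c,z_h)-\cN_h(\bu;\partial_t c,z_h)+\cR(\partial_t\bu;c,\P_c c;z_h),
\end{equation}
where $\cR$ collects all contributions coming from differentiating $D(\bu)$, $\nu_\cD^K(\bu)$, and the convection $\bu\cdot\nabla(\cdot)$ in time, on both sides of \eqref{eqn.projectionP}. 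Under the regularity assumption $\bu\in W^{1,\infty}(L^\infty)$ (built into $(R)$), each summand of $\cR$ is bounded by $\|\partial_t\bu\|_{0,\infty,\Omega}$ times broken $L^2$ or $H^1$ norms of $c$, $\P_c c$, and $z_h$, and is therefore controlled by the combination of Lemma~\ref{lem.cPcerror}(a) and the approximation estimates \eqref{eqn.cI}--\eqref{eqn.cpi}.

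To establish (a), I would set $z_h:=\partial_t(\P_c c-c_I)\in Z_h$, split
$\partial_t(c-\P_c c)=\partial_t(c-c_I)+\partial_t(c_I-\P_c c)$, bound the first piece by $h^{k+1}$ via the interpolation estimates \eqref{eqn.cI} applied to $\partial_t c\in H^{k+2}(\cT_h)$, and use the coercivity of $\Gamma_{c,h}(\bu;\cdot,\cdot)$ on the second piece. Inserting an auxiliary piecewise polynomial $\partial_t c_\pi$ and using the time-differentiated relation above, $\|z_h\|_{1,\cT_h}^2$ is decomposed exactly as the sum $A_1+\dots+A_5$ of \eqref{eqn.a1a2a3a4a5}, with $c$, $c_\pi$ replaced by $\partial_t c$, $\partial_t c_\pi$, plus one extra term coming from $\cR$. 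The terms $A_1$--$A_5$ are estimated verbatim (Lemma~\ref{lem.aux}, \Holder's inequality, the stability of the $L^2$ projectors, and Lemma~\ref{lem.cN} on $\partial_t c$), while the $\cR$ contribution is absorbed through Lemma~\ref{lem.cPcerror}(a) and the regularity of $\partial_t\bu$. This produces $\|z_h\|_{1,\cT_h}\lesssim h^{k+1}\xi_{1,t}$, hence (a).

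For (b), I would repeat the Aubin--Nitsche duality argument of Lemma~\ref{lem.cPcerror}, using the adjoint problem \eqref{eqn.dual} with right-hand side $\partial_t c-\partial_t\P_c c$ in place of $c-\P_c c$. Statement (a) supplies the $h^{k+1}$-control of $\|\partial_t(c-\P_c c)\|_{1,\cT_h}$ that plays the role of Lemma~\ref{lem.cPcerror}(a) in bounding $B_{2,4}$ and analogues of $B_3$; the remaining pieces $B_1$, $B_{2,1}$, $B_{2,2}$, $B_{2,3}$ are treated identically to the proof of Lemma~\ref{lem.cPcerror}, with an extra remainder of $\cR$-type that is of order $h^{k+2}\|\partial_t c-\partial_t\P_c c\|$ thanks to the regularity of $\partial_t\bu$.

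The main technical obstacle I expect is bookkeeping: differentiating the bilinear form $\Gamma_{c,h}(\bu;\P_c c,z_h)$ in $t$ generates several cross-terms from $\partial_t D(\bu)$, $\partial_t\nu_\cD^K(\bu)$, and from $\partial_t\bu$ inside $\Theta_h^{\bu}$, each of which must be shown to fit the template of Lemma~\ref{lem.aux} (with $\kappa$ replaced by its time derivative) so that the $h^{k+1}$ rate is not degraded. This is exactly where the hypothesis $\bu\in W^{1,\infty}(L^\infty)$ and the analogous regularity of $\partial_t c$ are consumed, and it is the source of the $t$-dependent constants $\xi_{1,t}$, $\xi_{0,t}$ in the statement.
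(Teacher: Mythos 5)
Your proposal is correct and follows essentially the same route as the paper, whose entire argument for this corollary is the one-line remark that differentiating \eqref{eqn.projectionP} in time and repeating the arguments of Lemma~\ref{lem.cPcerror} gives the result. Your additional bookkeeping of the remainder terms generated by $\partial_t D(\bu)$, $\partial_t\nu_\cD^K(\bu)$ and $\partial_t\bu$ in the convection term, controlled via $\bu\in W^{1,\infty}(L^\infty)$ and Lemma~\ref{lem.cPcerror}(a)--(b), is exactly the detail the paper leaves implicit.
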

\begin{lem}\cite[Lemma 4.4, 4.5]{Veiga_miscibledisplacement_2021}\label{lem.cPcnuuhn}
	Under sufficient smoothness of the continuous data and solution, it holds
	\begin{align}
		&\,\left\|\frac{\partial c^n}{\partial t}-\frac{\P_c c^n-\P_c c^{n-1}}{\tau}\right\|\le \tau^{\half}\left\| \frac{\partial^2 c}{\partial s^2}\right\|_{L^2(t_{n-1},t_n;L^2(\O))}+\tau^{-\half}h^{k+2}\left(\int_{t_{n-1}}^{t_n}\xi_{0,t}^2\ds\right)^{\half},\\
	%	(b)&\,\|\bu^n-\bu_h^{n-1}\|\lesssim \tau \left\|\frac{\partial \bu}{\partial t}\right\|_{L^\infty(t_{n-1},t_n;L^2(\O))}+\|\bu^{n-1}-\bu_h^{n-1}\|,
	\end{align}
	where $\xi_{0,t}$ is defined in Corollary \ref{cor.ctPcterror}.
\end{lem}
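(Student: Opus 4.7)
The plan is to split the quantity of interest by inserting the continuous backward quotient of $c$ itself:
\begin{equation}
\frac{\partial c^n}{\partial t}-\frac{\P_c c^n-\P_c c^{n-1}}{\tau}
=\left(\frac{\partial c^n}{\partial t}-\frac{c^n-c^{n-1}}{\tau}\right)
+\frac{1}{\tau}\big((c-\P_c c)(t_n)-(c-\P_c c)(t_{n-1})\big),
\end{equation}
and bound the two pieces separately by the two terms on the right-hand side of the claimed inequality. Apply the triangle inequality in $L^2(\Omega)$ once, and treat each piece with a standard space-time argument.

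For the first piece, the idea is to use the Taylor expansion with integral remainder about $t=t_n$, namely $c(t_{n-1})=c(t_n)-\tau\,\frac{\partial c^n}{\partial t}+\int_{t_{n-1}}^{t_n}(s-t_{n-1})\frac{\partial^2 c}{\partial s^2}(s)\,\mathrm{d}s$, so that $\frac{\partial c^n}{\partial t}-\frac{c^n-c^{n-1}}{\tau}=\frac{1}{\tau}\int_{t_{n-1}}^{t_n}(s-t_{n-1})\frac{\partial^2 c}{\partial s^2}(s)\,\mathrm{d}s$. Taking the $L^2(\Omega)$ norm inside the integral, followed by the Cauchy-Schwarz inequality in time applied to the weight $(s-t_{n-1})$, yields the bound $\tau^{1/2}\|\partial^2 c/\partial s^2\|_{L^2(t_{n-1},t_n;L^2(\Omega))}$ up to a harmless constant $1/\sqrt{3}$ absorbed into $\lesssim$.

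For the second piece, the fundamental theorem of calculus gives $\frac{1}{\tau}\big((c-\P_c c)(t_n)-(c-\P_c c)(t_{n-1})\big)=\frac{1}{\tau}\int_{t_{n-1}}^{t_n}\frac{\partial}{\partial s}(c-\P_c c)(s)\,\mathrm{d}s$. Taking $L^2(\Omega)$ norms under the integral and applying Cauchy-Schwarz in time produces a factor $\tau^{-1/2}$ together with the $L^2(t_{n-1},t_n;L^2(\Omega))$ norm of $\partial(c-\P_c c)/\partial s$. Invoking Corollary~\ref{cor.ctPcterror}(b) pointwise in time to replace $\|\partial(c-\P_c c)/\partial s\|$ by $h^{k+2}\xi_{0,t}$ gives precisely the second term in the claimed estimate.

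Both ingredients are standard, and no step is genuinely difficult; the only subtlety is to keep track of the $\tau^{1/2}$ versus $\tau^{-1/2}$ powers so that no inverse CFL-type assumption creeps in, since one weight comes from integrating $(s-t_{n-1})$ in time and the other from a Cauchy-Schwarz inequality. Once these two pieces are combined by the initial triangle inequality, the conclusion follows. No spatial properties of $\P_c$ beyond the temporal regularity of $c-\P_c c$ captured by $\xi_{0,t}$ are used here, so the argument is essentially decoupled from the rest of the virtual-element machinery.
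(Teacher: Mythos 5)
Your proposal is correct and follows the same standard route as the cited source (the paper itself defers to \cite[Lemmas 4.4, 4.5]{Veiga_miscibledisplacement_2021}): split into the backward-difference consistency error of $c$, bounded by the Taylor integral remainder and Cauchy--Schwarz in time, plus the difference quotient of $c-\P_c c$, bounded via the fundamental theorem of calculus and Corollary~\ref{cor.ctPcterror}(b). The bookkeeping of the $\tau^{1/2}$ and $\tau^{-1/2}$ factors is right, and since your first-term constant is $1/\sqrt{3}\le 1$, the estimate holds exactly as stated.
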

\subsection{Fully discrete formulation}
\noindent A semi-discrete formulation of \eqref{eqn.weak} is presented in Section~\ref{sec.semidiscrete}. This section deals with the fully discrete formulation which is discrete in both space and time. The temporal discretization is achieved through the utilization of a backward Euler method.

\smallskip

\noindent Let $0=t_0<t_1<\cdots<t_N=T$ be a given partition of $J=[0,T]$ with time step size $\tau$. That is, $t_n=n\tau$, $n=0,1,\cdots,N$. For a generic function $f(t)$, define $f^n:=f(t_n)$, $n=0,1,\cdots,N$. Also, define
\[\bu^n:=\bu(t_n),\, p^n:=p(t_n),\, c^n=c(t_n)\]
and
\[\bu_h^n:=\bu_h(t_n),\, p_h^n:=p_h(t_n),\, c_h^n=c_h(t_n).\]
At $t_n$, $n=1,\cdots,N$ with $c_h^0=c_{0,h}=\P_c c^0$ and $(\bu_h^0,p_h^0)=(\bu_{0,h},p_{0,h})=(\P_\bu \bu^0,\P_p p^0)$, the fully discrete formulation corresponding to the velocity-pressure equation seeks $(\bu_h^n,p_h^n) \in \bV_h \times Q_h$ such that
\begin{subequations}\label{eqn.fullydiscrete_uhph}
	\begin{align}
		\cA_h(c_{h}^{n-1};\bu_h^n,\bv_h)+B(\bv_h,p_h^n)&=0, \fl \bv_h \in \bV_h \label{eqn.fullydiscreteqv}\\
		W_h\left(c_h^{n-1};\frac{p_h^{n}-p_h^{n-1}}{\tau},w_h\right)-B(\bu_h^n,w_h)&=(q^n,w_h), \fl w_h \in Q_h.\label{eqn.fullydiscreteqq}
	\end{align}
\end{subequations}\noeqref{eqn.fullydiscreteqv,eqn.fullydiscreteqq}
Once $(\bu_h^n,p_h^n)$ is solved, the approximation to concentration at time $t=t_{n}$ can be obtained. The fully discrete formulation corresponding to the concentration equation seeks $c_h^{n}\in Z_h$ such that
\begin{align}
	\cM_h(\frac{ c_h^{n}-c_h^{n-1}}{\tau},z_h)+	K_h\left(c_h^{n};\frac{p_h^{n}-p_h^{n-1}}{\tau},z_h\right)&+\Theta_h(\bu_h^n,c_h^{n};z_h)+(q^nc_h^{n},z_h)_h+\cD_h(\bu_h^n;c_h^{n},z_h)\\
	=&(q^{n}\hc^{n},z_h)_h, \fl z_h \in Z_h.\label{eqn.fullydiscreteq_zh}
\end{align}
Note that \eqref{eqn.fullydiscrete_uhph} and \eqref{eqn.fullydiscreteq_zh} are decoupled from each other and hence represent system of linear equations eventhough the original problem is a nonlinear coupled system problem for concentration, pressure, and velocity.

\section{Error estimates}\label{sec:errors}
This section establishes the error estimates for velocity, pressure and concentration.

\medskip

\noindent For a generic function $f$, denote $\displaystyle d_tf^n=\frac{f^n-f^{n-1}}{\tau}$. Set
\begin{align*}
&\theta=\P_p p-p_h, \quad \bbeta=\P_\bu \bu -\bu_h, \quad \xi=\P_c c-c_h\\
&\eta=p-\P_p p, \quad \balpha=\bu-\P_\bu \bu, \quad \zeta=c-\P_c c.
\end{align*}
Note that $\theta^0=\bbeta^0=\xi^0=0$. 
\subsection{Error estimates for velocity and pressure}
This section deals with the error estimates for velocity and pressure.

\medskip

\noindent A use of \eqref{eqn.weakv}-\eqref{eqn.weakq} and \eqref{eqn.fullydiscrete_uhph} at $t=t^n$ and simple manipulation leads to
\begin{align}
	\cA(c^n;\bu^n,\bv_h)-\cA_h(c_h^{n-1};\bu_h^n,\bv_h)+B(\bv_h,p^n-p_h^n)&=0 \quad \fl \bv_h \in \bV_h,\\
	W(c^n;\frac{\partial p^n}{\partial t},w_h)-W_h(c_h^{n-1};\frac{p_h^n-p_h^{n-1}}{\tau},w_h)+B(\bu_h^n-\bu^n,w_h)&=0 \quad  \fl w_h \in Q_h.
\end{align}
This with \eqref{defn.projection} results in
\begin{align}
		\cA_h^{c^n}(\P_\bu \bu^n,\bv_h)-\cA_h(c_h^{n-1};\bu_h^n,\bv_h)+B(\bv_h,\theta^n)&=0 \quad \fl \bv_h \in \bV_h,\\
	W(c^n;\frac{\partial p^n}{\partial t},w_h)-W_h(c_h^{n-1};\frac{p_h^n-p_h^{n-1}}{\tau},w_h)-B(\bbeta^n,w_h)&=0 \quad  \fl w_h \in Q_h.
\end{align}
This can be rewritten as%, for all $\bv_h \in \bV_h$ and $w_h \in Q_h$,
\begin{align}
	&\cA_h(c_h^{n-1};\bbeta^n,\bv_h)+B(\bv_h,\theta^n)=	\cA_h(c_h^{n-1};\P_\bu \bu^n,\bv_h)-\cA_h^{c^n}(\P_\bu \bu^n,\bv_h),\label{eqn.upn}\\
	&W_h(c_h^{n-1};d_t \theta^n,w_h)-B(\bbeta^n,w_h)=-\left((d(c^n)-d(\Pi_{k+1}^{0}c_h^{n-1}))\frac{\partial p^n}{\partial t},w_h\right)\\
&-\left(d(\Pi_{k+1}^{0}c_h^{n-1})(\frac{\partial p^n}{\partial t}-\frac{p^n-p^{n-1}}{\tau}),w_h\right)
	-\left(d(\Pi_{k+1}^{0}c_h^{n-1})d_t\eta^n,w_h\right).\label{eqn.upn-1}
\end{align}
\begin{thm}\label{thm.up}
	 Let $(\bu,p,c)$ solves \eqref{eqn.weak} at time $t=t_n$. Given $c_h^n \in Z_h$, let $(u_h^n.p_h^n) \in \bV_h \times Q_h$ be the solution to \eqref{eqn.fullydiscrete_uhph}.  Suppose the space and time discretisation satisfy $\tau=\mathcal{O}(h^{k+1})$. Then, under the regularity assumption $(R)$,
	 \begin{align*}
	 	\|d_t\theta^N\|^2+\tau \sum_{n=1}^{N}\|d_t\bbeta^n\|^2 \le C\bigg(\tau^2+h^{2(k+1)}+ \tau \sum_{n=1}^N(\|d_t \theta^n\|^2+\|d_t \xi^{n-1}\|^2+\|\xi^{n-1}\|^2+\|\bbeta^{n-1}\|^2)\bigg).
	 \end{align*}
\end{thm}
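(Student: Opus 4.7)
The strategy is to derive a discrete-in-time differentiated version of the error equations \eqref{eqn.upn}--\eqref{eqn.upn-1} and test with $(d_t\bbeta^n, d_t\theta^n)$ so that the mixed $B$-pairing cancels and a coercive plus telescoping structure emerges. Concretely, I would subtract \eqref{eqn.upn} at step $n-1$ from its form at step $n$ and divide by $\tau$ to obtain an equation of the form $\cA_h(c_h^{n-1}; d_t\bbeta^n, \bv_h) + R^n(\bv_h) + B(\bv_h, d_t\theta^n) = d_t[\text{RHS}_1^n](\bv_h)$, where $R^n(\bv_h) = \tau^{-1}[\cA_h(c_h^{n-1};\bbeta^{n-1},\bv_h) - \cA_h(c_h^{n-2};\bbeta^{n-1},\bv_h)]$ collects the cross-terms caused by the shift in the first slot of $\cA_h$. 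The equation \eqref{eqn.upn-1} already carries $d_t\theta^n$; differencing it in time in the same way produces a leading expression $\tau^{-1}[W_h(c_h^{n-1}; d_t\theta^n, w_h) - W_h(c_h^{n-2}; d_t\theta^{n-1}, w_h)] - B(d_t\bbeta^n, w_h) = d_t[\text{RHS}_2^n](w_h)$.

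Testing the first with $\bv_h = d_t\bbeta^n$ and the second with $w_h = d_t\theta^n$ and adding, the $B(d_t\bbeta^n, d_t\theta^n)$ contributions cancel. Lemma~\ref{lem.propertiesdiscrete}(f) gives $\cA_h(c_h^{n-1}; d_t\bbeta^n, d_t\bbeta^n) \gtrsim \|d_t\bbeta^n\|^2$. For the $W_h$ contribution, I would first insert $W_h(c_h^{n-1}; d_t\theta^{n-1}, d_t\theta^n)$ to peel off a perturbation $W_h(c_h^{n-1};\cdot,\cdot) - W_h(c_h^{n-2};\cdot,\cdot)$ controlled by $\tau$ times Lipschitz data, and then apply the polarisation identity $2W_h(c_h^{n-1}; d_t\theta^n - d_t\theta^{n-1}, d_t\theta^n) = W_h(c_h^{n-1}; d_t\theta^n, d_t\theta^n) - W_h(c_h^{n-1}; d_t\theta^{n-1}, d_t\theta^{n-1}) + W_h(c_h^{n-1}; d_t\theta^n - d_t\theta^{n-1}, d_t\theta^n - d_t\theta^{n-1})$ to uncover telescoping. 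Multiplying by $\tau$ and summing from $n=1$ to $N$, with $\theta^0 = 0$ forcing $d_t\theta^0 = 0$ and the $d_*$-coercivity of $W_h$, isolates exactly $\|d_t\theta^N\|^2 + \tau\sum_n\|d_t\bbeta^n\|^2$ on the left.

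The remaining work is to bound everything else. The perturbation $c_h^{n-1} - c_h^n = \tau\, d_t(\P_c c^n) - \tau\, d_t\xi^n$, combined with the Lipschitz continuity of $d(\cdot)$ and Corollary~\ref{cor.ctPcterror}, contributes $\tau\sum_n(\|d_t\xi^{n-1}\|^2 + \|d_t\theta^n\|^2)$ via Young's inequality. The term $R^n(d_t\bbeta^n)$, once bounded via the Lipschitz continuity of $A(\cdot)$ and the inverse-type estimate \eqref{eqn.vhinfty} applied to $\bbeta^{n-1}$, gives $\tau\sum_n\|\bbeta^{n-1}\|^2$-type contributions after absorbing the $\varepsilon\|d_t\bbeta^n\|^2$ piece into the left. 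Finally, the time-difference of the inconsistency $\cA_h(c_h^{n-1}; \P_\bu\bu^n, \cdot) - \cA_h^{c^n}(\P_\bu\bu^n, \cdot)$ is bounded by $h^{k+1}$ using Lemmas~\ref{lem.aux} and~\ref{lem.projectionup} together with Corollary~\ref{cor.up}; the three terms on the right of \eqref{eqn.upn-1} (the Lipschitz gap $d(c^n) - d(\Pi_{k+1}^0 c_h^{n-1})$, the backward-Euler residual $\partial_t p^n - (p^n - p^{n-1})/\tau$, and $d_t \eta^n$) are controlled by Lipschitz continuity of $d(\cdot)$, a Taylor expansion in time furnishing $\tau\int_{t_{n-1}}^{t_n}\|p_{tt}\|^2$, and Corollary~\ref{cor.up}(b), respectively, collectively yielding $\tau^2 + h^{2(k+1)}$ under $(R)$.

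The main obstacle will be the cross term $R^n$: the discrete derivative of $\cA_h(\cdot\,;\bbeta^{n-1},\cdot)$ in its first argument is not a standard object, and must be split into a Lipschitz piece involving $\|c_h^{n-1} - c_h^{n-2}\|$ and a stabilisation piece using \eqref{eqn.SA}, with the $\tau^{-1}$ prefactor absorbed by $c_h^{n-1} - c_h^{n-2} = \tau\, d_t c_h^{n-1}$. Careful bookkeeping is required so that the Lipschitz bound on $A(\cdot)$ interacts only with $\|\bbeta^{n-1}\|$ (and the concentration error $\|d_t\xi^{n-1}\|$, $\|\xi^{n-1}\|$), matching exactly the terms on the right-hand side of the claim, after which a final assembly via Young's inequality yields the stated pre-Gronwall bound.
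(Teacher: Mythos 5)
Your proposal follows essentially the same route as the paper's proof: difference the error equations \eqref{eqn.upn}--\eqref{eqn.upn-1} in time, test with $\bv_h=d_t\bbeta^n$ and $w_h=d_t\theta^n$ so the $B$-terms cancel, extract coercivity of $\cA_h$ and a telescoping structure from the weighted $W_h$-type term, and then bound the Lipschitz perturbations in the first arguments (via $c_h^{n-1}-c_h^{n-2}=\tau d_t\P_c c^{n-1}-\tau d_t\xi^{n-1}$, Corollary~\ref{cor.ctPcterror}, inverse estimates and \eqref{eqn.SA}), the consistency terms (Lemmas~\ref{lem.aux}, \ref{lem.projectionup}, Corollary~\ref{cor.up}), and the backward-Euler residuals (Taylor expansion), finishing with Young's inequality and absorption. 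The identified tools and the resulting right-hand side match the paper's treatment of the terms $T_1$--$T_{10}$, so the sketch is correct and not genuinely different in method.
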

\begin{proof}
	Subtracting \eqref{eqn.upn}-\eqref{eqn.upn-1} at the $n$th time level $t=t_n$ and $(n-1)$th time level $t=t_{n-1}$, dividing by $\tau$, choosing $\bv_h=d_t\bbeta^n$ and $w_h=d_t \theta^n$ and adding the resultant equations, we obtain, for $K \in \cT_h$,
	\begin{align}
		(d_t(d(\Pi_{k+1}^{0,K}&c_h^{n-1})d_t\theta^n),d_t\theta^n)_{0,K}+(d_t(A(\Pi_{k+1}^{0,K}c_h^{n-1})\bPi_{K}^{0,K}\bbeta^n),\bPi_k^{0,K}d_t\bbeta^n)_{0,K}\\
		&{}=(d_t((A(\Pi_{k+1}^{0,K}c_h^{n-1})-A(c^n))\bPi_k^{0,K}\P_\bu \bu^n),\bPi_k^{0,K}d_t \bbeta^n)_{0,K}\\
		&\quad -\left(d_t\left((d(c^n)-d(\Pi_{k+1}^{0,K}c_h^{n-1}))\frac{\partial p^n}{\partial t}\right),d_t\theta^n\right)_{0,K}\\
		&\quad -\left(d_t\left(d(\Pi_{k+1}^{0,K}c_h^{n-1})\left(\frac{\partial p^n}{\partial t}-\frac{p^n-p^{n-1}}{\tau}\right)\right),d_t \theta^n\right)_{0,K}\\
		&\quad -(d_t(d(\Pi_{k+1}^{0,K}c_h^{n-1})d_t \eta^n),d_t \theta^n)_{0,K}\\
		&\quad -\nu_\cA^K(c_h^{n-1})S_\cA^K((1-\bPi_k^{0,K})\bbeta^n,(1-\bPi_k^{0,K})d_t\bbeta^n)/\tau\\
		&\quad +\nu_\cA^K(c_h^{n-2})S_\cA^K((1-\bPi_k^{0,K})\bbeta^{n-1},(1-\bPi_k^{0,K})d_t\bbeta^n)/\tau\\
		&\quad +(\nu_\cA^K(c_h^{n-1})-\nu_\cA^K(c^n))S_\cA^K((1-\bPi_k^{0,K})\P_\bu \bu^n,(1-\bPi_k^{0,K})d_t\bbeta^n)/\tau\\
		&\quad -(\nu_\cA^K(c_h^{n-2})-\nu_\cA^K(c^{n-1}))S_\cA^K((1-\bPi_k^{0,K})\P_\bu \bu^{n-1},(1-\bPi_k^{0,K})d_t\bbeta^n)/\tau.\label{eqn.dt}
	\end{align}
	Since
	\begin{align*}
(d_t(d(\Pi_{k+1}^{0,K}&c_h^{n-1})d_t\theta^n),d_t\theta^n)_{0,K} \ge  	\half d_t(d(\Pi_{k+1}^{0,K}c_h^{n-1})d_t\theta^n,d_t\theta^n)_{0,K}\\
&	-\half(d_t(d(\Pi_{k+1}^{0,K}c_h^{n-1}))d_t\theta^{n-1},d_t\theta^{n-1}) +(d_t(d(\Pi_{k+1}^{0,K}c_h^{n-1}))d_t\theta^{n-1},d_t\theta^n)_{0,K}
	\end{align*}
	and
	\begin{align*}
	(d_t(A(\Pi_{k+1}^{0,K}c_h^{n-1})\bPi_{K}^{0,K}\bbeta^n),\bPi_k^{0,K}d_t\bbeta^n)_{0,K}&=(A(\Pi_{k+1}^{0,K}c_h^{n-1})\bPi_{K}^{0,K}d_t\bbeta^n,\bPi_k^{0,K}d_t\bbeta^n)_{0,K}\\
	&\quad +(d_t(A(\Pi_{k+1}^{0,K}c_h^{n-1}))\bPi_{K}^{0,K}\bbeta^{n-1},\bPi_k^{0,K}d_t\bbeta^n)_{0,K},
	\end{align*}
	\eqref{eqn.dt} can be rewritten as
		\begin{align}
		&\half d_t(d(\Pi_{k+1}^{0,K}c_h^{n-1})d_t\theta^n,d_t\theta^n)_{0,K}+\cA_h(c_h^{n-1};d_t\bbeta^n,d_t\bbeta^n)_{0,K}\\
		&{}=-(d_t(A(\Pi_{k+1}^{0,K}c_h^{n-1}))\bPi_{K}^{0,K}\bbeta^{n-1},\bPi_k^{0,K}d_t\bbeta^n)_{0,K}\\
		&\quad 	+\half(d_t(d(\Pi_{k+1}^{0,K}c_h^{n-1}))d_t\theta^{n-1},d_t\theta^{n-1})_{0,K} -(d_t(d(\Pi_{k+1}^{0,K}c_h^{n-1}))d_t\theta^{n-1},d_t\theta^n)_{0,K}\\
		&\quad+ (d_t((A(\Pi_{k+1}^{0,K}c_h^{n-1})-A(c^n))\bPi_k^{0,K}\P_\bu \bu^n),\bPi_k^{0,K}d_t \bbeta^n)_{0,K}\\
		&\quad -\left(d_t\left((d(c^n)-d(\Pi_{k+1}^{0,K}c_h^{n-1}))\frac{\partial p^n}{\partial t}\right),d_t\theta^n\right)_{0,K}\\
		&\quad -\left(d_t\left(d(\Pi_{k+1}^{0,K}c_h^{n-1})\left(\frac{\partial p^n}{\partial t}-\frac{p^n-p^{n-1}}{\tau}\right)\right),d_t \theta^n\right)_{0,K}-(d_t(d(\Pi_{k+1}^{0,K}c_h^{n-1})d_t \eta^n),d_t \theta^n)_{0,K}\\
			&\quad +(\nu_\cA^K(c_h^{n-2})-\nu_\cA^K(c_h^{n-1}))S_\cA^K((1-\bPi_k^{0,K})\bbeta^{n-1},(1-\bPi_k^{0,K})d_t\bbeta^n)/\tau\\
				&\quad +(\nu_\cA^K(c_h^{n-1})-\nu_\cA^K(c^n))S_\cA^K((1-\bPi_k^{0,K})\P_\bu \bu^n,(1-\bPi_k^{0,K})d_t\bbeta^n)/\tau\\
			&\quad -(\nu_\cA^K(c_h^{n-2})-\nu_\cA^K(c^{n-1}))S_\cA^K((1-\bPi_k^{0,K})\P_\bu \bu^{n-1},(1-\bPi_k^{0,K})d_t\bbeta^n)/\tau.
%		&\quad - (\nu_\cA(c_h^{n-2})-\nu_\cA(c_h^{n-1}))S_\cA((1-\bPi_k^{0,K})\bu^{n-1},(1-\bPi_k^{0,K})d_t\bbeta^n)/\tau\\
%	%	&\quad +\nu_\cA(c_h^{n-2})S_\cA((1-\bPi_k^{0,K})\bbeta^{n-1},(1-\bPi_k^{0,K})d_t\bbeta^n)/\tau\\
%		&\quad +(\nu_\cA(c_h^{n-1})-\nu_\cA(c^n))S_\cA((1-\bPi_k^{0,K})\P_\bu \bu^n,(1-\bPi_k^{0,K})d_t\bbeta^n)/\tau.%\\
%	%	&\quad +(\nu_\cA(c_h^{n-2})-\nu_\cA(c^{n-1}))S_\cA((1-\bPi_k^{0,K})\P_\bu \bu^{n-1},(1-\bPi_k^{0,K})d_t\bbeta^n)/\tau.
	\label{eqn.dt.1}
	\end{align}
	Multiplying \eqref{eqn.dt.1} by $\tau$, summing over all $K \in \cT_h$, he using the coercivity of $\cA_h(c_h^{n-1};\bullet,\bullet)$, and summing $n$ from 1 to $N$, we obtain
			\begin{align}
		&C\|d_t\theta^N\|^2+C\tau \sum_{n=1}^{N}\|d_t\bbeta^n\|^2\\
		&{}\le \tau \sum_{n=1}^{N} -(d_t(A(\Pi_{k+1}^{0}c_h^{n-1}))\bPi_{K}^{0}\bbeta^{n-1},\bPi_k^{0}d_t\bbeta^n)\\
		&\quad 	+\tau \sum_{n=1}^{N}\half(d_t(d(\Pi_{k+1}^{0}c_h^{n-1}))d_t\theta^{n-1},d_t\theta^{n-1}) -\tau \sum_{n=1}^{N}(d_t(d(\Pi_{k+1}^{0}c_h^{n-1}))d_t\theta^{n-1},d_t\theta^n)\\
		&\quad +\tau \sum_{n=1}^{N}(d_t((A(\Pi_{k+1}^{0}c_h^{n-1})-A(c^n))\bPi_k^{0}\P_\bu \bu^n),\bPi_k^{0}d_t \bbeta^n)\\
		&\quad -\tau \sum_{n=1}^{N}\left(d_t\left((d(c^n)-d(\Pi_{k+1}^{0}c_h^{n-1}))\frac{\partial p^n}{\partial t}\right),d_t\theta^n\right)\\
		&\quad -\tau \sum_{n=1}^{N}\left(d_t\left(d(\Pi_{k+1}^{0}c_h^{n-1})\left(\frac{\partial p^n}{\partial t}-\frac{p^n-p^{n-1}}{\tau}\right)\right),d_t \theta^n\right)+\tau \sum_{n=1}^{N}(d_t(d(\Pi_{k+1}^{0}c_h^{n-1})d_t \eta^n),d_t \theta^n)\\
		&\quad +  \sum_{n=1}^{N}\sum_{K\in \cT_h}(\nu_\cA(c_h^{n-2})-\nu_\cA(c_h^{n-1}))S_\cA((1-\bPi_k^{0,K})\bbeta^{n-1},(1-\bPi_k^{0,K})d_t\bbeta^n)\\
&\quad +  \sum_{n=1}^{N}\sum_{K\in \cT_h}(\nu_\cA(c_h^{n-1})-\nu_\cA(c^n))S_\cA((1-\bPi_k^{0,K})\P_\bu \bu^n,(1-\bPi_k^{0,K})d_t\bbeta^n)\\
&\quad -  \sum_{n=1}^{N}\sum_{K\in \cT_h}(\nu_\cA(c_h^{n-2})-\nu_\cA(c^{n-1}))S_\cA((1-\bPi_k^{0,K})\P_\bu \bu^{n-1},(1-\bPi_k^{0,K})d_t\bbeta^n).		
%		&\quad - \tau \sum_{n=1}^{N}(\nu_\cA(c_h^{n-2})-\nu_\cA(c_h^{n-1}))S_\cA((1-\bPi_k^{0,K})\bu^{n-1},(1-\bPi_k^{0,K})d_t\bbeta^n)/\tau\\
%		%	&\quad +\nu_\cA(c_h^{n-2})S_\cA((1-\bPi_k^{0,K})\bbeta^{n-1},(1-\bPi_k^{0,K})d_t\bbeta^n)/\tau\\
%		&\quad +\tau \sum_{n=1}^{N}(\nu_\cA(c_h^{n-1})-\nu_\cA(c^n))S_\cA((1-\bPi_k^{0,K})\P_\bu \bu^n,(1-\bPi_k^{0,K})d_t\bbeta^n)/\tau.%\\
%	%	&\quad +\tau \sum_{n=1}^{N}(\nu_\cA(c_h^{n-2})-\nu_\cA(c^{n-1}))S_\cA((1-\bPi_k^{0,K})\P_\bu \bu^{n-1},(1-\bPi_k^{0,K})d_t\bbeta^n)/\tau
	=:\sum_{i=1}^{10} T_i.
	\label{eqn.dt.ti}
	\end{align}
		The Lipschitz continuity of $A(\bullet)$ leads to $$\|A(\Pi_{k+1}^0 c_h^{n-1})-A(\Pi_{k+1}^0 c_h^{n-2})\|_{0,\infty,\Omega}\lesssim \| \Pi_{k+1}^0 c_h^{n-1}-\Pi_{k+1}^0 c_h^{n-2}\|_{0,\infty,\Omega}.$$ Elementary algebra and the definition of $\xi$ and $\zeta$ read
		\begin{align}
		\Pi_{k+1}^0 c_h^{n-1}-\Pi_{k+1}^0 &c_h^{n-2}%=(\Pi_{k+1}^{0}c_h^{n-1}-\Pi_{k+1}^{0}c_h^{n-2}-\Pi_{k+1}^{0}c^{n}+\Pi_{k+1}^{0}c^{n-1})+(\Pi_{k+1}^{0}c^{n}-\Pi_{k+1}^{0}c^{n-1})\\
		%&=-(\Pi_{k+1}^{0}(d_t \xi^{n-1}+d_t\zeta^{n-1})\tau)-\Pi_{k+1}^0(c^n-2c^{n-1}+c^{n-2})+\Pi_{k+1}^0(c^n-c^{n-1})\\
			=-(\Pi_{k+1}^{0}(d_t \xi^{n-1}+d_t\zeta^{n-1})\tau)+\Pi_{k+1}^0(c^{n-1}-c^{n-2}).\label{eqn.Pidiff}
		\end{align}
		This, an inverse estimate, the boundedness of $\Pi_{k+1}^0$, Corollary~\ref{cor.ctPcterror} for $d_t\zeta^{n-1} \approx \frac{\partial}{\partial t}(\zeta^{n-1})$, and Taylors expansion provide
		\begin{align}
			\| 		\Pi_{k+1}^0 c_h^{n-1}-\Pi_{k+1}^0 &c_h^{n-2}\|_{0,\infty,\Omega} \lesssim \tau(1+h^{k+1}+\|d_t \xi^{n-1}\|_{0,\infty,\Omega}).\label{eqn.Pidiffinfty}
		\end{align}
%		\begin{align*}
%			\tau\|d_t(A(\Pi_{k+1}^0 c_h^{n-1}))\|_\infty &\lesssim \|\Pi_{k+1}^0 c_h^{n-1}-\Pi_{k+1}^0 c_h^{n-2}\|_\infty\\
%			&\lesssim \tau\|d_t \xi^{n-1}\|_\infty+h^{-1}(\|\Pi_{k+1}^0 (\P_c c^{n-1}-c^{n-1})\|+\|\Pi_{k+1}^0 (\P_c c^{n-2}-c^{n-2})\|)\\
%			&\qquad +\|\Pi_{k+1}^0 (c^{n-1}-c^{n-2})\|_\infty\\
%			&\lesssim \tau\|d_t \xi^{n-1}\|_\infty+h^{k+1}+\tau.
%		\end{align*}
	This, the \Holders inequality, the boundedness of  $\bPi_{k}^{0,K}$, and Youngs inequality show
	\begin{equation}
		|T_1| \le\epsilon \tau \sum_{n=1}^N \|d_t \bbeta^n\|^2+C \tau\sum_{n=1}^N\big(1+\|d_t \xi^{n-1}\|^2_{0,\infty,\Omega}+h^{2(k+1)}\big)\|\beta^{n-1}\|^2.
	\end{equation}
	The \Holders inequality, the boundedness of $d(\bullet)$, \eqref{eqn.Pidiffinfty}, and Youngs inequality provide
	\begin{equation*}
		|T_2|+|T_3| \le C \tau \sum_{n=1}^N\big(1+\|d_t\xi^{n-1}\|_{0,\infty,\Omega}^2+h^{2(k+1)}\big)\|d_t \theta^{n-1}\|^2+C\tau\sum_{n=1}^N\|d_t \theta^n\|^2.
	\end{equation*}
	A simple manipulation leads to
	\begin{align*}
		T_4 &=\sum_{n=1}^N\left({(A(\Pi_{k+1}^{0}c_h^{n-1})-A(c^n))}(\bPi_k^{0}(\P_\bu \bu^n-\P_\bu \bu^{n-1})),\bPi_k^0 d_t \beta^n\right)\\
		&\quad +\sum_{n=1}^N\left({(A(\Pi_{k+1}^{0}c_h^{n-1})-A(c^n)-(A(\Pi_{k+1}^{0}c_h^{n-2})-A(c^{n-1})))}\bPi_k^{0}\P_\bu \bu^{n-1},\bPi_k^0 d_t \beta^n\right)\\
		&=:I_1+I_2.\label{eqn.i1i2}
	\end{align*}
	The Lipschitz continuity of $A(\bullet)$, an introduction of $\Pi_{k+1}^0 c^n$, Lemma~\ref{lem.approx}, inverse estimate, the continuity of $L^2$ projector, Lemma~\ref{lem.projectionup}, and Taylors expansion read 
	\begin{align}
		\|A(\Pi_{k+1}^{0}&c_h^{n-1})-A(c^n)\|_{0,\infty,\Omega}\\%=	\|\Pi_{k+1}^{0}(c_h^{n-1}-c^n)\|_{\infty}+h^{k+1}\\
		& \lesssim  h^{k+1}+\|\Pi_{k+1}^{0}\xi^{n-1}\|_{{0,\infty,\Omega}} +\|\Pi_{k+1}^{0}(\P_c c^{n-1}-c^{n-1})\|_{{0,\infty,\Omega}}+\|\Pi_{k+1}^{0}(c^{n-1}-c^{n})\|_{{0,\infty,\Omega}} \\
		& \lesssim \|\xi^{n-1}\|_{{0,\infty,\Omega}} +h^{k+1}+\tau.\label{eqn.infty.1}
	\end{align}
	This, \Holders inequality, the boundedness $\bPi_{k}^0$ and $\P_\bu$, Taylors expansion, and Youngs inequality show
	\begin{align}
		I_1 
		&\le\epsilon \tau\sum_{n=1}^N\|d_t\bbeta^n\|^2+C\tau\left(\sum_{n=1}^N\|\xi^{n-1}\|_{{0,\infty,\Omega}}^2 +h^{2(k+1)}+\tau^2\right).
	\end{align}
The Lipschitz continuity of $A(\bullet)$ leads to
	\begin{align}
		A(\Pi_{k+1}^{0}&c_h^{n-1})-A(c^n)-(A(\Pi_{k+1}^{0}c_h^{n-2})+A(c^{n-1}))\\
		&=\frac{\partial A}{\partial c}( \overline{\Pi_{k+1}^0c}_h^{n-1})(\Pi_{k+1}^{0}c_h^{n-1}-\Pi_{k+1}^{0}c_h^{n-2})-\frac{\partial A}{\partial c}( \overline{c}^{n-1})(c^{n-1}-c^{n-2})\\
		&=\frac{\partial A}{\partial c}( \overline{\Pi_{k+1}^0 c}_h^{n-1})(\Pi_{k+1}^{0}c_h^{n-1}-\Pi_{k+1}^{0}c_h^{n-2}-c^{n}+c^{n-1})\\
		&\quad +(\frac{\partial A}{\partial c}( \overline{\Pi_{k+1}^0 c}_h^{n-1})-\frac{\partial A}{\partial c}( \overline{ c}^{n-1}))(c^{n-1}-c^{n-2})	\\
		&=\frac{\partial A}{\partial c}( \overline{\Pi_{k+1}^0c}_h^{n-1})(\Pi_{k+1}^{0}c_h^{n-1}-\Pi_{k+1}^{0}c_h^{n-2}-c^{n}+c^{n-1})\\
		&\quad +\frac{\partial^2 A}{\partial c^2}(\widehat{c}^{n-1})( \overline{\Pi_{k+1}^0c}_h^{n-1}- \overline{c}^{n-1})(c^{n-1}-c^{n-2}), \label{eqn.A.1}
	\end{align}
where $\overline{c}^{n-1} \in (c^{n-2},c^{n-1})$, $\overline{\Pi_{k+1}^0c}_h^{n-1} \in ({\Pi_{k+1}^0c}_h^{n-2},{\Pi_{k+1}^0c}_h^{n-1})$, and $\widehat{c}^{n-1} \in (\overline{\Pi_{k+1}^0c}_h^{n-1},\overline{c}^{n-1})$ respectively. Elementary algebra and the definition of $\xi$ and $\zeta$ read
	\begin{align}
		\Pi_{k+1}^{0}c_h^{n-1}-&\Pi_{k+1}^{0}c_h^{n-2}-c^{n}+c^{n-1}\\
		&=\Pi_{k+1}^{0}c_h^{n-1}-\Pi_{k+1}^{0}c_h^{n-2}-\Pi_{k+1}^{0}c^{n}+\Pi_{k+1}^{0}c^{n-1}-(d_t(c^n-\Pi_{k+1}^0 c^n)\tau)\\
		&=-(\Pi_{k+1}^{0}(d_t \xi^{n-1}+d_t\zeta^{n-1})\tau)-(d_t(c^n-\Pi_{k+1}^0 c^n)\tau)-\Pi_{k+1}^0(c^{n}-2c^{n-1}+c^{n-2}),
	\end{align}
	This, an inverse estimate, the boundedness of $\Pi_{k+1}^0$, Corollary \ref{cor.ctPcterror} for $d_t\zeta^{n-1}$, Taylors expansion, and Lemma~\ref{lem.approx} provide
	\begin{align}\label{eqn.A2.1}
		\| 	\Pi_{k+1}^{0}c_h^{n-1}-\Pi_{k+1}^{0}c_h^{n-2}-c^{n}+c^{n-1}\|_{0,\infty,\Omega} \lesssim \tau(\tau+h^{k+1}+\|d_t \xi^{n-1}\|_{0,\infty,\Omega}).
	\end{align}
		Taylors expansion, an introduction of $\Pi_{k+1}^0 \P_c c^{n-1}$ and $\Pi_{k+1}^0 c^{n-1}$, boundedness of $\Pi_{k+1}^0$, inverse estimate, Lemma~\ref{lem.projectionup}, and Lemma~\ref{lem.approx} show
	\begin{align}
		\|\overline{\Pi_{k+1}^0c}_h^{n-1}- \overline{c}^{n-1} \|_{0,\infty,\Omega} &\lesssim \|{\Pi_{k+1}^0c}_h^{n-1}-{c}^{n-1} \|_{0,\infty,\Omega}+\tau\lesssim  \|\xi^{n-1}\|_{{0,\infty,\Omega}} +h^{k+1}+\tau.\label{eqn.A1.1}
	\end{align}
	A combination of \eqref{eqn.A1.1} and \eqref{eqn.A2.1} in \eqref{eqn.A.1} together with boundedness of $\Pi_{k+1}^0$ and Taylors expansion yields
	\begin{align*}
		\|A(\Pi_{k+1}^{0}c_h^{n-1})-&A(c^{n})-(A(\Pi_{k+1}^{0}c_h^{n-2})-A(c^{n-1}))\|_{0,\infty,\Omega}\\
		&\lesssim \tau(\tau+h^{k+1}+\|d_t \xi^{n-1}\|_{0,\infty,\Omega}+\|\xi^{n-1}\|_{0,\infty,\Omega}).
	\end{align*}
	Consequently,
	\[I_2 \le  \epsilon \tau\sum_{n=1}^N\|d_t \bbeta^n\|^2+C\tau\left(\tau^2+h^{2(k+1)}+\sum_{n=1}^N(\|d_t \xi^{n-1}\|_{0,\infty,\Omega}^2+\|\xi^{n-1}\|_{0,\infty,\Omega}^2)\right).\]
	A combination of $I_1$ and $I_2$ in $T_4$ shows
	$$ T_4\le \epsilon \tau\sum_{n=1}^N\|d_t\bbeta^n\|^2+C\tau\left(\tau^2+h^{2(k+1)}+\sum_{n=1}^N(\|d_t \xi^{n-1}\|_{0,\infty,\Omega}^2+\|\xi^{n-1}\|_{0,\infty,\Omega}^2)\right).$$
%%%%%%%%%%%%%%%%%%%%%
Arguments analogous to $T_4$ leads to
\[|T_5| \le  C\tau\left(\tau^2+h^{2(k+1)}+\sum_{n=1}^N(\|d_t \xi^{n-1}\|_{0,\infty,\Omega}^2+\|\xi^{n-1}\|_{0,\infty,\Omega}^2+\|d_t\theta^n\|^2)\right).\]
The term $T_6$ can be rewritten as
\begin{align*}
	T_6=&\tau\sum_{n=1}^N\left(\frac{d(\Pi_{k+1}^{0}c_h^{n-1})-d(\Pi_{k+1}^{0}c_h^{n-2)}}{\tau}\left(\frac{\partial p^n}{\partial t}-\frac{p^n-p^{n-1}}{\tau}\right),d_t \theta^n\right)\\
	&\quad + \tau\sum_{n=1}^N\left(\frac{d(\Pi_{k+1}^{0}c_h^{n-2})}{\tau}\left(\frac{\partial (p^n-p^{n-1})}{\partial t}-\frac{p^n-2p^{n-1}+p^{n-2}}{\tau}\right),d_t \theta^n\right).
\end{align*}
The \Holders inequality, boundedness of $d(\bullet)$, \eqref{eqn.Pidiffinfty}, and Taylors expansion imply
\[|T_6| \le C \tau \sum_{n=1}^N(\|d_t \theta^n\|^2+\|d_t \xi^{n-1}\|_{0,\infty,\Omega}^2)+C(\tau^2+h^{2(k+1)}).\]
Analogous arguments and Corollary \ref{cor.up} lead to 
\[|T_7| \le C \tau \sum_{n=1}^N(\|d_t \theta^n\|^2+\|d_t \xi^{n-1}\|^2)+C(\tau^2+h^{2(k+1)}).\]
The \Holders inequality, \eqref{eqn.Pidiffinfty}, the boundedness of  $\bPi_{k}^{0,K}$, and Youngs inequality show
\begin{equation}
	|T_8| \le\epsilon \tau \sum_{n=1}^N \|d_t \bbeta^n\|^2+C \tau\sum_{n=1}^N(1+\|d_t \xi^{n-1}\|^2_{0,\infty,\Omega}+h^{2(k+1)})\|\beta^{n-1}\|^2.
\end{equation}
Arguments analogous to the estimate of $T_4$ provides
\[|T_9|+|T_{10}| \le \tau\sum_{n=1}^N\|d_t\bbeta^n\|^2+C\tau\left(\tau^2+h^{2(k+1)}+\sum_{n=1}^N(\|d_t \xi^{n-1}\|_{0,\infty,\Omega}^2+\|\xi^{n-1}\|_{0,\infty,\Omega}^2)\right).\]
A combination of $T_1$ to $T_{10}$ in \eqref{eqn.dt.ti} leads to
\begin{align*}
		\|d_t&\theta^N\|^2+\tau \sum_{n=1}^{N}\|d_t\bbeta^n\|^2\\
		 &\le C\bigg(\tau^2+h^{2(k+1)}+ \tau \sum_{n=1}^N(\|d_t \theta^n\|^2+\|d_t \xi^{n-1}\|_{0,\infty,\Omega}^2+\|\xi^{n-1}\|_{0,\infty,\Omega}^2+\|\bbeta^{n-1}\|^2)\bigg).
\end{align*}
\end{proof}
\subsection{Error estimates for concentration}
This section deals with the error estimates for concentration and is followed by the main result.

\smallskip
\noindent Observe that
\begin{align}\label{eqn.phiN}
	\|\phi^N\|^2-\|\phi^0\|^2 \le C\tau\sum_{n=1}^N\|\phi^n\|^2+\epsilon \tau \sum_{n=1}^N \|d_t \phi^n\|^2.
\end{align}
%\begin{lem}\cite[(62)]{Veiga_miscibledisplacement_2021}\label{lem.Pinablainfty}
%	Under the mesh assumption (\textbf{D3}), for all $c \in H^{k+2}(\cT_h)\cap W^{1,\infty}(\cT_h)$,
%	$$\|\bPi_k^{0,K}\nabla \P_c c\|_{\infty,K}\lesssim h_K^{-1}\|\bPi_k^{0,K}\nabla \P_c c\|_{0,K}\lesssim h_K^{-1}\|\nabla \P_c c\|_{0,K}\lesssim 1.$$
%\end{lem}
\begin{thm}\label{thm.c}
	 Let $(\bu,p,c) $ solves \eqref{eqn.weak} at time $t=t_n$ and let $(c_h^n,u_h^n,p_h^n) \in Z_h \times \bV_h \times Q_h$ be the solution to \eqref{eqn.fullydiscrete_uhph}-\eqref{eqn.fullydiscreteq_zh}. Suppose the space and time discretisation satisfy $\tau=\mathcal{O}(h^{k+1})$. Then, under the regularity assumption $(R)$,
\begin{align*}
	\tau\sum_{n=1}^N &\|d_t \xi^n\|^2+|\xi^N|_{1,\cT_h}^2 \le 	C(\tau^2+h^{2(k+1)}+\tau \sum_{n=1}^N(\|\xi^n\|^2+|\xi^n|_{1,\cT_h}^2+\|d_t \theta^n\|^2+\|\beta^n\|^2)).
\end{align*}
\end{thm}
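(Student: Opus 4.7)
The plan is to mirror the strategy used in Theorem~\ref{thm.up}. I start by subtracting the fully-discrete concentration scheme \eqref{eqn.fullydiscreteq_zh} from the weak form \eqref{eqn.weakz} at $t=t_n$, and then use the projection identity \eqref{eqn.projectionP} together with the splittings $c^n-c_h^n=\zeta^n+\xi^n$, $p^n-p_h^n=\eta^n+\theta^n$, and $\bu^n-\bu_h^n=\balpha^n+\bbeta^n$ to derive an error equation of the schematic form
\begin{equation*}
\cM_h(d_t\xi^n,z_h)+\cD_h(\bu_h^n;\xi^n,z_h)+\Theta_h(\bu_h^n,\xi^n;z_h)+(q^n\xi^n,z_h)_h=\langle \mathcal R^n, z_h\rangle,
\end{equation*}
where $\mathcal R^n$ collects the following sources: (i) the nonconforming consistency $\cN_h(\bu^n;c^n,z_h)$ controlled by Lemma~\ref{lem.cN}; (ii) differences of the continuous forms against their polynomial-projected discrete counterparts, which fall under Lemma~\ref{lem.aux} with the Lipschitz assumptions on $D$, $\phi$, $b$, $q$; (iii) the backward-Euler truncation errors $\partial_t c^n-d_t c^n$ and $\partial_t p^n-d_t p^n$; (iv) the approximation errors $\zeta^n$, $d_t\zeta^n$, $d_t\eta^n$ coming from Lemma~\ref{lem.cPcerror} and Corollary~\ref{cor.ctPcterror}; and (v) the coupling contributions $K_h(c_h^{n-1};d_tp_h^n,z_h)$ which produce $d_t\theta^n$ and the velocity error $\bbeta^n$ through $\Theta_h$.

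Next I test with $z_h=d_t\xi^n$. Lemma~\ref{lem.propertiesdiscrete}(b) applied to $\cM_h$ yields $\|d_t\xi^n\|^2$ on the left-hand side. For the diffusion term I use the discrete product rule
\begin{equation*}
\cD_h(\bu_h^n;\xi^n,d_t\xi^n)=\tfrac{1}{2\tau}\big[\cD_h(\bu_h^n;\xi^n,\xi^n)-\cD_h(\bu_h^n;\xi^{n-1},\xi^{n-1})\big]+\tfrac{1}{2\tau}\cD_h(\bu_h^n;\xi^n-\xi^{n-1},\xi^n-\xi^{n-1}),
\end{equation*}
together with the $\bu$-shift identity $\cD_h(\bu_h^n;\xi^{n-1},\xi^{n-1})=\cD_h(\bu_h^{n-1};\xi^{n-1},\xi^{n-1})+\big[\cD_h(\bu_h^n;\cdot,\cdot)-\cD_h(\bu_h^{n-1};\cdot,\cdot)\big](\xi^{n-1},\xi^{n-1})$, so that after multiplying by $\tau$ and summing $n=1,\dots,N$ the main contribution telescopes to $|\xi^N|_{1,\cT_h}^2$ (via Lemma~\ref{lem.propertiesdiscrete}(d) and $\xi^0=0$), while the residual piece involving $D(\bPi_k^0\bu_h^n)-D(\bPi_k^0\bu_h^{n-1})$ is controlled by the Lipschitz property of $D$ together with an $L^\infty$ estimate for $\bu_h^n-\bu_h^{n-1}$ (obtained exactly as in \eqref{eqn.Pidiffinfty} using Corollary~\ref{cor.up} and an inverse estimate), giving a bound in terms of $\tau(1+h^{k+1})^2|\xi^{n-1}|_{1,\cT_h}^2$. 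The convective term $\Theta_h(\bu_h^n,\xi^n;d_t\xi^n)$ and the reactive term $(q^n\xi^n,d_t\xi^n)_h$ are handled by the $L^\infty$-bound on $\bu$ from $(R)$, the stability of the projectors, and Young's inequality, producing $\epsilon\|d_t\xi^n\|^2$ plus $C(\|\xi^n\|^2+|\xi^n|_{1,\cT_h}^2)$.

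The right-hand side is bounded term-by-term by repeatedly invoking Lemma~\ref{lem.aux} for the polynomial-projection consistency of $\cD_h,\Theta_h,\cM_h,K_h$, Corollary~\ref{cor.ctPcterror} for the projection time-derivative error, Lemma~\ref{lem.cPcnuuhn} for the composite error $\partial_t c^n-\tau^{-1}(\P_c c^n-\P_c c^{n-1})$, Lemma~\ref{lem.cN} for the jump consistency of the nonconforming method, and Taylor expansion under $(R)$ to bound $\partial_t p^n-d_tp^n$ by $\tau^{1/2}\|p_{tt}\|_{L^2(t_{n-1},t_n;L^2)}$. The coupling with the pressure/velocity equations enters through the Lipschitz coefficient differences $b(\Pi_{k+1}^0 c_h^{n-1})-b(c^n)$ tested against $d_tp_h^n=d_t\P_p p^n-d_t\theta^n$: the first piece is absorbed into the $h^{k+1}+\tau$ consistency bound, while the second is left as $C\tau\sum\|d_t\theta^n\|^2$. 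All Young applications are arranged so that $\epsilon\tau\sum\|d_t\xi^n\|^2$ is absorbed into the LHS.

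The main obstacle will be the simultaneous handling of the nonlinear dispersion tensor $D(\bu_h^n)$ and the nonconforming jumps: the telescoping in the $|\xi^N|_{1,\cT_h}^2$-estimate requires a careful discrete product rule combined with a uniform $L^\infty$-shift estimate on $\bu_h^n-\bu_h^{n-1}$, which in turn forces the CFL-type balance $\tau=\mathcal O(h^{k+1})$ postulated in the statement. The secondary difficulty is the convection term $\Theta_h(\bu_h^n,\xi^n;d_t\xi^n)$, where $d_t\xi^n$ is the critical quantity on the LHS; this is resolved by keeping $\|\xi^n\|+|\xi^n|_{1,\cT_h}$ on the RHS (to be closed later by a discrete Gronwall inequality together with Theorem~\ref{thm.up}) and transferring the $\bu_h^n-\bu^n$ difference into the $\|\bbeta^n\|^2$ term via the $(L^\infty)^2$ regularity of $\bu$ in $(R)$.
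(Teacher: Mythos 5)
Your overall architecture coincides with the paper's (error equation through the projector $\P_c$, testing with $d_t\xi^n$, coercivity of $\cM_h$, telescoping the dispersion term in time, deferring $\|\xi^n\|$, $|\xi^n|_{1,\cT_h}$, $\|d_t\theta^n\|$, $\|\bbeta^n\|$ to a later Gronwall step), but there is a genuine gap in how you close the right-hand side. Several of your residual terms are paired with the \emph{gradient} of the test function $d_t\xi^n$, not with $d_t\xi^n$ itself: the nonconforming consistency term $\cN_h(\bu^n;c^n,d_t\xi^n)$, which by Lemma~\ref{lem.cN} is only bounded by $h^{k+1}|d_t\xi^n|_{1,\cT_h}$, and the dispersion consistency/coefficient differences such as $\cD_h(\bu_h^{n};\P_c c^n,d_t\xi^n)-\cD_{\pw}^{\bu^{n}}(c^n,d_t\xi^n)$, which via Lemma~\ref{lem.aux} give at best $(h^{k+1}+\|\bbeta^n\|)\,|d_t\xi^n|_{1,\cT_h}$. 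The left-hand side you retain controls only $\tau\sum_n\|d_t\xi^n\|^2$ in $L^2$ plus $|\xi^N|_{1,\cT_h}^2$, so the announced ``Young with $\epsilon\tau\sum_n\|d_t\xi^n\|^2$'' cannot absorb these contributions; an inverse estimate $|d_t\xi^n|_{1,\cT_h}\lesssim h^{-1}\|d_t\xi^n\|$ would degrade the consistency to $h^{k}$ and turn $\|\bbeta^n\|^2$ into $h^{-2}\|\bbeta^n\|^2$, neither of which is compatible with the bound you are claiming.

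The paper avoids this with two devices that are missing from your plan. First, by the definition \eqref{eqn.projectionP} of $\P_c$ together with an elementwise integration by parts, the combination $\cD_{\pw}^{\bu^n}(c^n,z_h)-\cN_h(\bu^n;c^n,z_h)$ is replaced by the strong form $(-\divc(D(\bu^n)\nabla c^n),z_h)$, so when $z_h=d_t\xi^n$ no jump term and no pairing of continuous data with $\nabla d_t\xi^n$ survives. Second, the remaining purely discrete coefficient difference (the paper's $A_5$) is treated by a discrete summation by parts in time, moving $d_t$ off $\xi$ onto $(D(\bu^n)-D(\bPi_k^{0}\bu_h^n))\bPi_k^{0}(\nabla\P_c c^n)$; the boundary term at $n=N$ is absorbed into $\epsilon|\xi^N|_{1,\cT_h}^2$, and the differentiated coefficient brings in $d_t\bbeta^n$, which is exactly why Theorem~\ref{thm.up} and the $\|d_t\theta^n\|^2$ terms appear in the final estimate. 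Your treatment of the left-hand dispersion term (discrete product rule plus an $L^\infty$ shift estimate for $\bu_h^n-\bu_h^{n-1}$) is in the spirit of the paper, but without the strong-form substitution and the Abel summation on the right-hand side, the term-by-term estimation step in your proposal would fail as written.
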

\begin{proof}
	The proof is divided into three steps. %Throughout this proof, let $\eta$ denote generic constants, which will take different values at different places but will always be independent of $h$ and $\tau$.
	
	\medskip
	
\noindent	\textbf{Step 1:} 
%The triangle inequality with $\P_c c^n$ and Lemma~\ref{lem.cPcerror}.b show
%	\begin{equation}\label{eqn.nu1tri}
%		\|c^n-c_h^n\|\le \|c^n-\P_c c^n\|+\|\P_c c^n-c_h^n\|\le \eta h^{k+2}+\|\P_c c^n-c_h^n\|.
%	\end{equation}
	Let $c_h^n-\P_c c^n=\nu^n \in Z_h$. The discrete fully formulation \eqref{eqn.fullydiscreteq_zh} and the definition of $\P_c c^n$ in \eqref{eqn.projectionP} provide, for $z_h \in Z_h$,
	\begin{align}
		\cM_h&\left(\frac{\xi^n-\xi^{n-1}}{\tau},z_h\right)+\cD_h(\bu_h^{n};\xi^n,z_h)\\
		&=\cM_h\left(\frac{\P_c c^n-\P_c c^{n-1}}{\tau},z_h\right)-\cM_h\left(\frac{c_h^n-c_h^{n-1}}{\tau},z_h\right)+\cD_h(\bu_h^{n};\xi^n,z_h)\\
		&=\cM_h\left(\frac{\P_c c^n-\P_c c^{n-1}}{\tau},z_h\right)+	K_h\left(c_h^{n};\frac{p_h^{n}-p_h^{n-1}}{\tau},z_h\right)+\Theta_h(\bu_h^n,c_h^{n};z_h)+(q^nc_h^{n},z_h)_h\\
		&\qquad -(q^{n}\hc^{n},z_h)_h+\cD_h(\bu_h^{n};\P_c c^n,z_h)\\
		&=\cM_h\left(\frac{\P_c c^n-\P_c c^{n-1}}{\tau},z_h\right)+	K_h\left(c_h^{n};\frac{p_h^{n}-p_h^{n-1}}{\tau},z_h\right)+\Theta_h(\bu_h^n,c_h^{n};z_h)+(q^nc_h^{n},z_h)_h\\
		&\qquad -(q^{n}\hc^{n},z_h)_h +\cD_h(\bu_h^{n};\P_c c^n,z_h)-\cD_h^{\bu^{n}}(\P_c c^n,z_h)+\cD_h^{\bu^{n}}(\P_c c^n,z_h)\\
		&=\cM_h\left(\frac{\P_c c^n-\P_c c^{n-1}}{\tau},z_h\right)+	K_h\left(c_h^{n};\frac{p_h^{n}-p_h^{n-1}}{\tau},z_h\right)+\Theta_h(\bu_h^n,c_h^{n};z_h)+(q^nc_h^{n},z_h)_h\\
	&\qquad -(q^{n}\hc^{n},z_h)_h +\cD_h(\bu_h^{n};\P_c c^n,z_h)-\cD_h^{\bu^{n}}(\P_c c^n,z_h)-\Theta_h^{\bu^n}(\P_c c^n,z_h)-\lambda(\P_c c^n,z_h)_h\\
	&\qquad +\cD_{\pw}^{\bu^n}(c^n,z_h)+\Theta_{\pw}^{\bu^n}(c^n,z_h)+\lambda(c^n,z_h)-\cN_h(\bu^n;c^n,z_h).\label{eqn1}
	\end{align}
	An integration by parts yields
	\begin{align}
		\cD_{\pw}^{\bu^{n}}(c^n,z_h)-\sum_{e \in \cE_h}\int_e(D(\bu^n)\nabla c^n \cdot \bn_e )\jump{z_h}\ds&=(-\divc(D(\bu^n)\nabla c^n),z_h).
	\end{align}
%	and
%	\begin{align}
%		\Theta_{\pw}^{\bu^{n}}(c^n,z_h)+\sum_{e \in \cE_h}\int_e\frac{c^n\bu^n \cdot n_e}{2} \jump{z_h}\ds&=(\bu^n\cdot \nabla c^n,z_h).%+(q^{+n}c^n,\xi^n).
%	\end{align}
%%	Lemma~\ref{lem.propertiesdiscrete}.d reads $\cD_h(\bu_h^{n-1};\xi^n,\xi^n) \ge D_* |\xi^n|_{1,\cT_h}^2$ where $D_*$ is independent of $h$ and $\bu_h^{n-1}$.
	 This in \eqref{eqn1} together with the definition of $\cN_h(\bu^n;c^n,z_h)$ and $-\divc(D(\bu^n)\nabla c^n)+\bu^n\cdot \nabla c^n=q^{n}(\hc^n-c^n)-\phi \frac{\partial c^n}{\partial t}-b(c) \frac{\partial p^n}{\partial t}$ from \eqref{eqn.model} show
	\begin{align}
		\cM_h&\left(\frac{\xi^n-\xi^{n-1}}{\tau},z_h\right)+\cD_h(\bu_h^n;\xi^n,z_h)\\
		=&	\left(\cM_h\left(\frac{\P_c c^n-\P_c c^{n-1}}{\tau},z_h\right)-\cM\left(\phi \frac{\partial c^n}{\partial t},z_h\right)\right)+	\left(K_h\left(c_h^{n};\frac{p_h^{n}-p_h^{n-1}}{\tau},z_h\right)-K\left(c^{n};\frac{\partial p^n}{\partial t},z_h\right)\right)\\
	&\qquad +(\Theta_h(\bu_h^{n},c_h^{n};z_h)-\Theta_h^{\bu^{n}}(\P_c c^n,z_h)) +((q^nc_h^{n},z_h)_h-(q^nc^{n},z_h))\\
		&\qquad +(\cD_h(\bu_h^{n};\P_c c^n,z_h)-\cD_h^{\bu^{n}}(\P_c c^n,z_h)) +\lambda((c^n,z_h)-(\P_c c^n,z_h)_h)\\
		&\qquad+((q^{n}\hc^n,z_h)-(q^{n}\hc^{n},z_h)_h).%=:A_1+A_2+A_3+A_4+A_5+A_6+A_7.\label{eqna1a2a3a4a5}
	\end{align}
Choosing $z_h=d_t\xi^n$ and multiplying  both sides with $\tau$ and then summing for $n$ from 1 to $N$, we obtain
	\begin{align}
	\tau \sum_{n=1}^N&\cM_h\left(d_t\xi^n,d_t\xi^n\right)+\tau \sum_{n=1}^N\cD_h(\bu_h^n;\xi^n,d_t\xi^n)\\
	=&	\tau \sum_{n=1}^N\left(\cM_h\left(\frac{\P_c c^n-\P_c c^{n-1}}{\tau},d_t\xi^n\right)-\cM\left(\phi \frac{\partial c^n}{\partial t},d_t\xi^n\right)\right)\\
	&\qquad +\tau \sum_{n=1}^N	\left(K_h\left(c_h^{n};\frac{p_h^{n}-p_h^{n-1}}{\tau},d_t\xi^n\right)-K\left(c^{n};\frac{\partial p^n}{\partial t},d_t \xi^n\right)\right)\\
	&\qquad +\tau \sum_{n=1}^N(\Theta_h(\bu_h^{n},c_h^{n};d_t\xi^n)-\Theta_h^{\bu^{n}}(\P_c c^n,d_t\xi^n)) +\tau \sum_{n=1}^N((q^nc_h^{n},d_t\xi^n)_h-(q^nc^{n},d_t \xi^n))\\
	&\qquad +\tau \sum_{n=1}^N(\cD_h(\bu_h^{n};\P_c c^n,d_t\xi^n)-\cD_h^{\bu^{n}}(\P_c c^n,d_t\xi^n)) \\
	&\qquad+\tau \sum_{n=1}^N\lambda((c^n,d_t\xi^n)-(\P_c c^n,d_t\xi^n)_h)+\tau \sum_{n=1}^N((q^{n}\hc^n,d_t\xi^n)-(q^{n}\hc^{n},d_t\xi^n)_h).\label{eqna1a2a3a4a5}
\end{align}	
The coercivity property of $\cM_h(\bullet,\bullet)$ in Lemma~\ref{lem.propertiesdiscrete}.b shows 
\begin{equation}\label{eqn.Mh}
\tau \sum_{n=1}^N\cM_h\left(d_t\xi^n,d_t\xi^n\right) \gtrsim \tau\sum_{n=1}^N \|d_t \xi^n\|^2.
\end{equation}
Since 
\begin{align*}
	d_t((D(\bPi_{k}^{0,K}\bu_h^n)(\Pi_{k+1}^{0,K}\nabla \xi^n),(\Pi_{k+1}^{0,K}\nabla \xi^n)))&=(d_t(D(\bPi_{k}^{0,K}\bu_h^n))(\Pi_{k+1}^{0,K}\nabla \xi^n),(\Pi_{k+1}^{0,K}\nabla \xi^n))\\
	&\quad +2(D(\bPi_{k}^{0,K}\bu_h^n)(\Pi_{k+1}^{0,K}\nabla \xi^n),(\Pi_{k+1}^{0,K}\nabla d_t \xi^n)),
\end{align*}
we have
\begin{align*}
	(D(\bPi_{k}^{0,K}\bu_h^n)(\Pi_{k+1}^{0,K}\nabla \xi^n),\Pi_{k+1}^{0,K}\nabla d_t \xi^n)&=\frac{1}{2\tau}\bigg((D(\bPi_{k}^{0,K}\bu_h^{n-1})(\Pi_{k+1}^{0,K}\nabla \xi^n),(\Pi_{k+1}^{0,K}\nabla \xi^n))\\
	&\quad -(D(\bPi_{k}^{0,K}\bu_h^{n-1})(\Pi_{k+1}^{0,K}\nabla \xi^{n-1}),(\Pi_{k+1}^{0,K}\nabla \xi^{n-1}))\bigg).
\end{align*}
This and \eqref{eqn.Mh} in \eqref{eqn.a1a2a3a4a5} together with the fact that $\xi^0=0$ result in
	\begin{align}
\tau\sum_{n=1}^N &\|d_t \xi^n\|^2+| \xi^N|_{1,\cT_h}^2\lesssim 	\tau \sum_{n=1}^N\left(\cM_h\left(\frac{\P_c c^n-\P_c c^{n-1}}{\tau},d_t\xi^n\right)-\cM\left(\phi \frac{\partial c^n}{\partial t},d_t\xi^n\right)\right)\\
	&\qquad +\tau \sum_{n=1}^N	\left(K_h\left(c_h^{n};\frac{p_h^{n}-p_h^{n-1}}{\tau},d_t\xi^n\right)-K\left(c^{n};\frac{\partial p^n}{\partial t},d_t \xi^n\right)\right)\\
	&\qquad +\tau \sum_{n=1}^N(\Theta_h(\bu_h^{n},c_h^{n};d_t\xi^n)-\Theta_h^{\bu^{n}}(\P_c c^n,d_t\xi^n)) +\tau \sum_{n=1}^N((q^nc_h^{n},d_t\xi^n)_h-(q^nc^{n},d_t \xi^n))\\
	&\qquad +\tau \sum_{n=1}^N(\cD_h(\bu_h^{n};\P_c c^n,d_t\xi^n)-\cD_h^{\bu^{n}}(\P_c c^n,d_t\xi^n)))\\
	&\qquad +\tau \sum_{n=1}^N\lambda((c^n,d_t\xi^n)-(\P_c c^n,d_t\xi^n)_h+\tau \sum_{n=1}^N((q^{n}\hc^n,d_t\xi^n)-(q^{n}\hc^{n},d_t\xi^n)_h)\\
	&\qquad +\tau \sum_{n=1}^N\nu_\cD^K(\bu_h^n)S_\cD^K((I-\Pi_{k+1}^{\nabla,K})\xi^n,(I-\Pi_{k+1}^{\nabla,K})d_t\xi^n)=:\sum_{i=1}^8A_i.\label{eqna1.a8}
\end{align}	

\medskip
	
\noindent	\textbf{Step 2:} Estimation of $A_1,\cdots,A_8.$\\
	
\noindent The definition of $\cM(\bullet,\bullet)$ and $\cM_h(\bullet,\bullet)$ in \eqref{defn.bilinear} and \eqref{defn.Mh}, orthogonality and continuity properties of $\Pi_{k+1}^{0,K}$, Cauchy Schwarz inequality, $S_\cM^K(z_h,\tilde{z}_h)\le M_1^\cM\|z_h\|_{0,K}\|\tilde{z}_h\|_{0,K}$ from \eqref{eqn.SM} prove
	\begin{align*}
	\cM\left(\phi \frac{\partial c^n}{\partial t},d_t\xi^n\right)&-\cM_h\left(\frac{\P_c c^n-\P_c c^{n-1}}{\tau},d_t\xi^n\right)\\
		&=\sum_{K \in \cT_h}\bigg[\left(\phi \frac{\partial c^n}{\partial t},d_t\xi^n\right)_{0,K}-\left(\phi \Pi_{k+1}^{0,K}\left(\frac{\P_c c^n-\P_c c^{n-1}}{\tau}\right),\Pi_{k+1}^{0,K}d_t\xi^n\right)_{0,K}\\
		&\qquad - \nu_\cM^K(\phi) S_\cM^K\left((I-\Pi_{k+1}^{0,K})\left(\frac{\P_c c^n-\P_c c^{n-1}}{\tau}\right),(I-\Pi_{k+1}^{0,K})d_t\xi^n\right)\bigg]\\
		&=\sum_{K \in \cT_h}\bigg[\left(\phi \frac{\partial c^n}{\partial t},d_t\xi^n\right)_{0,K}-\left(\Pi_{k+1}^{0,K}\left(\phi \Pi_{k+1}^{0,K}\left(\frac{\P_c c^n-\P_c c^{n-1}}{\tau}\right)\right),d_t\xi^n\right)_{0,K}\\
		&\qquad - \nu_\cM^K(\phi) S_\cM^K\left((I-\Pi_{k+1}^{0,K})\left(\frac{\P_c c^n-\P_c c^{n-1}}{\tau}\right),(I-\Pi_{k+1}^{0})d_t\xi^n\right)\bigg]\\
		&\le \eta\bigg[\left\|\phi \frac{\partial c^n}{\partial t}-\Pi_{k+1}^{0}\left(\phi \Pi_{k+1}^{0}\left(\frac{\P_c c^n-\P_c c^{n-1}}{\tau}\right)\right)\right\|\| d_t\xi^n\|\\
		&\qquad + \left\|(I-\Pi_{k+1}^{0})\left(\frac{\P_c c^n-\P_c c^{n-1}}{\tau}\right)\right\|\|d_t\xi^n\|\bigg]=:\eta(A_{1,1}+A_{1,2})\|d_t\xi^n\|.\label{eqna1}
	\end{align*}
	The continuity of the $L^2$ projector $\Pi_{k+1}^{0}$, boundedness of $\phi$, and Lemma \ref{lem.approx} read
	\begin{align}
		A_{1,1}&\le \left\|(I-\Pi_{k+1}^{0})\phi \frac{\partial c^n}{\partial t}\right\|+\left\|\Pi_{k+1}^{0}\left(\phi \frac{\partial c^n}{\partial t}-\phi\Pi_{k+1}^{0} \frac{\partial c^n}{\partial t}\right)\right\|\\
		&\qquad +\left\|\Pi_{k+1}^{0}\left(\phi \Pi_{k+1}^{0}\left(\frac{\partial c^n}{\partial t}-\frac{\P_c c^n-\P_c c^{n-1}}{\tau}\right)\right)\right\|\\
		&\le \eta \bigg[h^{k+2}\left(\left|\phi \frac{\partial c^n}{\partial t}\right|_{k+2,\cT_h}+\left|\frac{\partial c^n}{\partial t}\right|_{k+2,\cT_h}\right)+\left\| \frac{\partial c^n}{\partial t}-\frac{\P_c c^n-\P_c c^{n-1}}{\tau}\right\|\bigg].\label{eqna11}
	\end{align} 
	Analogous arguments provides
	\begin{align}
		A_{2,1}&= \left\|(I-\Pi_{k+1}^{0})\left(\frac{\P_c c^n-\P_c c^{n-1}}{\tau}-\frac{\partial c^n}{\partial t}\right)\right\|+\left\|(I-\Pi_{k+1}^{0})\frac{\partial c^n}{\partial t}\right\|\\
		&\le \eta\bigg[ \left\|\frac{\P_c c^n-\P_c c^{n-1}}{\tau}-\frac{\partial c^n}{\partial t} \right\|+h^{k+2}\left|\frac{\partial c^n}{\partial t}\right|_{k+2,\cT_h}\bigg].
	\end{align}
	This and \eqref{eqna11} in \eqref{eqna1} together with Lemma \ref{lem.cPcnuuhn} and Youngs inequality result in
	\begin{align}
		A_1&\le C(h^{2(k+2)}+\tau^2) +\epsilon \tau\sum_{i=1}^N\|d_t\xi^n\|^2.\label{eqna1new}
	\end{align}
	A simple manipulation leads to
\begin{align*}
A_2&=	\tau \sum_{n=1}^N	\bigg[K_h\left(c_h^{n};\frac{p_h^{n}-p_h^{n-1}}{\tau}-\frac{p^{n}-p^{n-1}}{\tau},d_t\xi^n\right)+K_h\left(c_h^{n};\frac{p^{n}-p^{n-1}}{\tau}-\frac{\partial p^n}{\partial t},d_t\xi^n\right)\\
&\qquad +\left(K_h\left(c_h^{n};\frac{\partial p^n}{\partial t},d_t\xi^n\right)-K\left(c^{n};\frac{\partial p^n}{\partial t},d_t \xi^n\right)\right)\bigg]=:A_{2,1}+A_{2,2}+A_{2,3}.
	\end{align*}
The \Holders inequality, Taylors expansion, Corollary~\ref{cor.up}, continuity of $L^2$ projector, and Youngs inequality provide
\[A_{2,1}\le C\Big(h^{2(k+1)}+\tau \sum_{n=1}^N\|d_t \theta^n\|^2\Big)+\epsilon \tau \sum_{n=1}^N \|d_t \xi^n\|^2.\]
The \Holder inequality, Taylors expansion, and Youngs inequality leads to $A_{2,2}\le C\tau^2+\epsilon \tau \sum_{n=1}^N \|d_t \xi^n\|^2.$
The definition of the projection operator $\Pi_{k+1}^0$, \Holders inequality, the boundedness of $b(\bullet)$, an introduction of $\Pi_{k+1}^{0}c^n$ and $\P_c c^n$, Lemma~\ref{lem.approx}, and Lemma~\ref{lem.cPcerror} imply
\begin{align*}
	A_{2,3}&=\tau \sum_{n=1}^N\left((b(\Pi_{k+1}^0c_h^{n})-b(c^{n}))\frac{\partial p^n}{\partial t},d_t \xi^n\right) \le C\tau \sum_{n=1}^N\|\Pi_{k+1}^0 c_h^n-c^n\|\|d_t \xi^n\|\\
	&\le C\Big(h^{2(k+2)}+\tau \sum_{n=1}^N\|\xi^n\|^2\Big)+\epsilon \tau \sum_{n=1}^N \|d_t \xi^n\|^2.
\end{align*}
A combination of $A_{2,1}-A_{2,3}$ in $A_2$ shows
\begin{align*}
	A_2&\le C\Big(\tau^2+h^{2(k+1)}+\tau \sum_{n=1}^N\|d_t \theta^n\|^2+\tau \sum_{n=1}^N\|\xi^n\|^2\Big)+\epsilon \tau \sum_{n=1}^N \|d_t \xi^n\|^2.
\end{align*}
The \Holders inequality, continuity of $L^2$ projector,  Lemma~\ref{lem.projectionup}.a, and Lemma~\ref{lem.approx} read 
\begin{align*}
A_3&=\tau \sum_{n=1}^N\big[(\bPi_{k}^0\bu_h^{n} \cdot \bPi_{k}^0 (\nabla c_h^{n}),\Pi_{k+1}^0d_t\xi^n)-(\bu^{n} \cdot \bPi_{k}^{0}(\nabla \P_c c^n),\Pi_{k+1}^0d_t\xi^n)\big]\\
&=\tau \sum_{n=1}^N\big[(\bPi_{k}^0(\bu_h^{n}-\bu^n) \cdot \bPi_{k}^0 (\nabla c_h^{n}),\Pi_{k+1}^0d_t\xi^n)+((\bPi_{k}^0\bu^n-\bu^n) \cdot \bPi_{k}^0 (\nabla c_h^{n}),\Pi_{k+1}^0d_t\xi^n)\\
&\qquad +(\bu^{n} \cdot \bPi_{k}^{0}\nabla (c_h^n-\P_c c^n),\Pi_{k+1}^0d_t\xi^n)\big]\\
&\le \epsilon \tau \sum_{n=1}^N \|d_t \xi^n\|^2+ C\Big(h^{2(k+1)}+\tau \sum_{n=1}^N(\|\beta^n\|^2+\|\nabla \xi^n\|^2)\Big).
\end{align*}
The \Holder inequality, an introduction of $\Pi_{k+1}^0$, Lemma~\ref{lem.approx},~\ref{lem.cPcerror}.b, continuity and orthogonality property of $\Pi_{k+1}$ together with $qc \in H^{k+1}(\cT_h)$ and Youngs inequality show
\begin{align*}
	A_4&=\tau \sum_{n=1}^N(q^n(\Pi_{k+1}^0c_h^{n}-c^n),\Pi_{k+1}^0d_t\xi^n)+\tau \sum_{n=1}^N((\Pi_{k+1}^0-1)q^nc^n,d_t\xi^n)\\
	&\le C\Big(h^{2(k+1)}+\tau \sum_{n=1}^N\|\xi^n\|^2\Big)+\epsilon \tau \sum_{n=1}^N \|d_t \xi^n\|^2.
\end{align*}
 A simple manipulation leads to 
\begin{align}
	A_5
	&=\tau\sum_{n=1}^N\big[( (D(\bPi_k^{0}\bu_h^n)-D(\bu^n)) \bPi_{k}^{0}(\nabla \P_c c^n),\bPi_{k}^{0}(\nabla d_t \xi^n))\\
	&\qquad + (\nu_\cD(\bu_h^{n})-\nu_\cD(\bu^n))S_\cD((I-\Pi_{k+1}^{\nabla})\P_c c^n,(I-\Pi_{k+1}^{\nabla})d_t\xi^n)\big]\\
	&= \tau\sum_{n=1}^N\big[(d_t((D(\bu^n)-D(\bPi_k^{0}\bu_h^n)) \bPi_{k}^{0}(\nabla \P_c c^n)),\bPi_{k}^{0}\nabla \xi^n)\\
	&\qquad + (\nu_\cD(\bu_h^{n})-\nu_\cD(\bu^n))S_\cD((I-\Pi_{k+1}^{\nabla})\P_c c^n,(I-\Pi_{k+1}^{\nabla})d_t\xi^n)\big]\\
	&\qquad +((D(\bPi_k^{0}\bu_h^N)-D(\bu^N)) \bPi_{k}^{0}(\nabla \P_c c^N),\bPi_{k}^{0} \nabla\xi^N).
\end{align}
The generalised \Holder inequality, Lipschitz continuity of $D(\bullet)$, Lemma~\ref{lem.approx}, $\|\bPi_k^{0,K}\nabla \P_c c\|_{\infty,K} \lesssim 1$ from \cite[(62)]{Veiga_miscibledisplacement_2021}, Lemma~\ref{lem.projectionup}.a, Corollary~\ref{cor.up}, continuity of the projection operator, the definition of $\nu_\cD^K(\bullet)$, \eqref{eqn.phiN} for $\phi=\bbeta$, Theorem~\ref{thm.up}, and the stability property of $S_\cD^K(\bullet,\bullet)$ in \eqref{eqn.SD} show 
\begin{align}
	A_5 \le C\Big(h^{2(k+1)}+\tau \sum_{n=1}^N(\big\|\bbeta^n\|^2+\|\xi^n\|^2+\|d_t \theta^n\|^2\big)\Big)+\epsilon \tau \sum_{n=1}^N \Big(\|d_t \xi^n\|^2+| \xi^n|_{1,\cT_h}^2\Big)+\epsilon | \xi^N|_{1,\cT_h}^2.
\end{align}
The Cauchy Schwarz inequality, Lemma~\ref{lem.approx},~\ref{lem.cPcerror}.b, and Youngs inequality provide
\begin{align*}
	A_6&=\lambda\tau \sum_{n=1}^N(((I-\Pi_{k+1}^0)c^n,d_t\xi^n)-(\Pi_{k+1}^0(\P_c c^n-c^n),d_t\xi^n))\le Ch^{2(k+2)}+\epsilon \tau \sum_{n=1}^N \|d_t \xi^n\|^2.
\end{align*}
The orthogonality property of $\Pi_{k+1}$, $q\hc \in H^{k+1}(\cT_h)$, and Youngs inequality show
\begin{align*}
	A_7&=\tau \sum_{n=1}^N((\Pi_{k+1}^0-1)q^n\hc^n,d_t\xi^n)
	\le Ch^{2(k+1)}+\epsilon \tau \sum_{n=1}^N \|d_t \xi^n\|^2.
\end{align*}
The property of $S_\cD^K(\bullet,\bullet)$ in \ref{eqn.SD} and Youngs inequality imply
\[A_8 \le C\tau \sum_{n=1}^N\|\xi^n\|^2+\epsilon \tau \sum_{n=1}^N \|d_t \xi^n\|^2.\]

\medskip

\noindent	\textbf{Step 3:} Conclusion\\

\noindent A combination of $A_1$ to $A_8$ in \eqref{eqna1.a8} leads to
\begin{align*}
\tau\sum_{n=1}^N &\|d_t \xi^n\|^2+|\xi^N|_{1,\cT_h}^2 \le 	C\bigg(\tau^2+h^{2(k+1)}+\tau \sum_{n=1}^N\Big(\|\xi^n\|_{}^2+| \xi^n|_{1,\cT_h}^2+\|d_t \theta^n\|^2+\|\beta^n\|^2\Big)\bigg).
\end{align*}
\end{proof}
A combination of \eqref{eqn.phiN}, Theorem~\ref{thm.up}, and Theorem~\ref{thm.c} shows
\begin{align*}
\|\theta^N\|^2+&\|\bbeta^N\|^2+\|\xi^N\|^2+| \xi^N|_{1,\cT_h}^2+\|d_t\theta^N\|^2+\tau \sum_{n=1}^{N}\|d_t\bbeta^n\|^2 +\tau\sum_{n=1}^N \|d_t \xi^n\|^2\\
	&\le C\tau\sum_{n=1}^N\Big(\|\theta^{n}\|^2+\|\bbeta^{n}\|^2+\|\xi^n\|^2+| \xi^n|_{1,\cT_h}^2+\|d_t \theta^n\|^2\Big)+C\Big(\tau^2+h^{2(k+1)}\Big).
\end{align*}
An application of discrete Gronwall's Lemma provides
\begin{align}
		\max_n\|\theta^n\|^2+&\max_n\|\bbeta^n\|^2+\max_n\|\xi^n\|^2+\max_n | \xi^n|_{1,\cT_h}^2+\max_n\|d_t\theta^n\|^2\nonumber\\
		&+\tau \sum_{n=1}^{N}\|d_t\bbeta^n\|^2 +\tau\sum_{n=1}^N \|d_t \xi^n\|^2\le C(\tau^2+h^{2(k+1)}).\label{eqn.xithetabeta}
\end{align}
%\begin{cor}\label{cor.main}
%	Under the assumptions of Theorem~\ref{thm.c}, there exists a constant $C>0$, independent of $h$ and $\tau$, such that the following hold: for $n=1,\cdots,N,$
%	\begin{align}
%	&	\max_{n}\|\bu^n-\bu_h^n\|+\max_n \|p^n-p_h^n\|+\max_n \|c^n-c_h^n\| \le C(h^{k+1}+\tau).%\\
%%&	\max_n\|c^n-c_h^n\|_{1,\tau_h} \le C(h^{k+1}+\tau).
%	\end{align}
%\end{cor}
%\begin{proof}
{\it Proof of Theorem~\ref{thm.main}.}
The results follow from triangle inequalities, Lemma~\ref{lem.projectionup}, Lemma~\ref{lem.cPcerror}, and \eqref{eqn.xithetabeta} respectively.

\section{Numerical Results}\label{sec:numericalresults}
\noindent This section presents a few examples on general polygonal meshes for the lowest order case $k=0$ to illustrate the theoretical estimates in the previous section. These experiments are conducted on both an ideal test case (Example \ref{sec.example1} and a more realistic test case (Example \ref{sec.example2}), providing comprehensive validation. An interesting aspect of VEM is its ability to be implemented solely based on the degrees of freedom and the polynomial component of the approximation space, see \cite{Veiga_hitchhikersVEM} for details on the implementation procedure.

\subsection{Example 1}\label{sec.example1}
\noindent The model problem is constructed in such a way that the exact solution is known. Let the errors be denoted by
\begin{align*}
	&\err(p):=\|p^n-\Pi_0p_h^n\|_{},\,\quad  \err(\bu):=\|\bu^n-\bPi_0\bu_h^n\|_{},\, \mbox{ and }\; \err(c):=\|c^n-\Pi_1c_h^n\|_{},\,% \quad  \err(\nabla c):=\|c^n-\Pi_1c_h^n\|_{1,\cT_h},
\end{align*} where ($p^n, \bu^n, c^n$) (resp. ($\Pi_0p_h^n, \bPi_0\bu_h^n, \Pi_1 c_h^n$)) is the exact (resp. numerical) solution at the final time $t_n=T$.

\smallskip

\noindent  Let the computational domain be $\Omega=(0,1)^2$ and consider the generalised problem of \eqref{eqn.model} with 
\begin{align}
	&d(c)\frac{\partial p}{\partial t}+\nabla \cdot\bu=q,\quad \bu=-a(c)(\nabla p-\bg(c)),\nonumber\\
	&\phi\frac{\partial c}{\partial t}+b(c)\frac{\partial p}{\partial t}+\bu\cdot \nabla c -\divc(D(\bu)\nabla c)=f.\nonumber
\end{align}
with the boundary conditions \eqref{eqn.bc} and \eqref{eqn.ic} respectively. The exact solution is given by 
\begin{align*}
	c(x,y,t)&=t^2(x^2(x-1)^2+y^2(y-1)^2)\\
	\bu(x,y,t)&=2t^2\begin{pmatrix}
		x(x-1)(2x-1)\\
		y(y-1)(2y-1)
	\end{pmatrix}\\
	p(x,y,t)&=\frac{-1}{2}c^2-2c+\frac{17}{6300}t^4+\frac{2}{15}t^2.
\end{align*}
 Choose $D(\bu)=|\bu|+0.02$, where $d_m=0.02$ and $d_\ell=d_t=1$, $T=0.1$, $a(c)=b(c)=d(c)=c+2$, $\phi=1$, $ \bg(c)=0$, and $c_0=0=p_0$. 
%Let the exact solution be given by $c(x,y,t)=e^{-t}\sin(\pi x)\sin(\pi y)$ and $p(x,y,t)=e^{x+y^2-t}$. $T=0.1$, $d(c)=1$, $b(c)=0$, $a(c)=10$, $D(\bu)=1$, and $\phi=1$.

\smallskip

\noindent A series of square, triangular, concave, structured Voronoi, and random Voronoi meshes (see Figure~\ref{fig.Triangle}-\ref{fig.RV}) are employed to test the convergence results for the VEM.

\begin{figure}[h!!]
	\begin{center}
		\begin{minipage}[b]{0.25\linewidth}
			{\includegraphics[width=5cm]{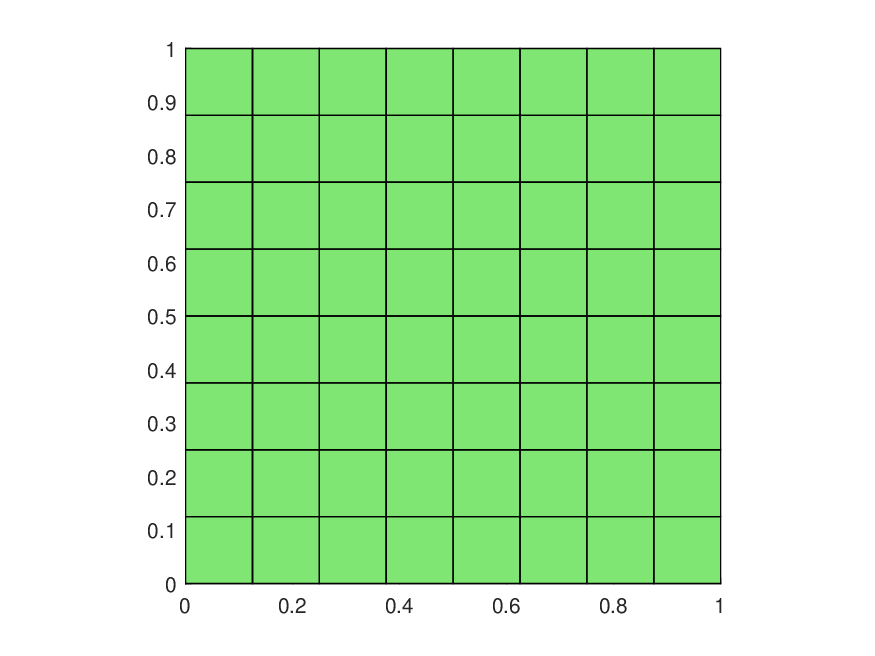}}
			\caption{Square Mesh}\label{fig.Square}
		\end{minipage}
		\begin{minipage}[b]{0.25\linewidth}
			{\includegraphics[width=5cm]{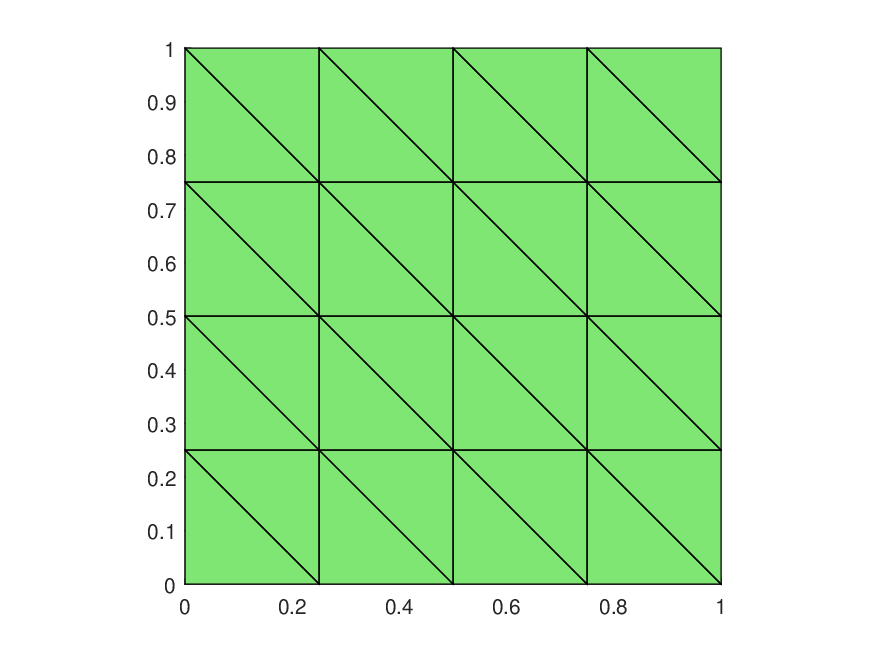}}
			\caption{Triangular Mesh}\label{fig.Triangle}
		\end{minipage}
		\begin{minipage}[b]{0.35\linewidth}
			{\includegraphics[width=5cm]{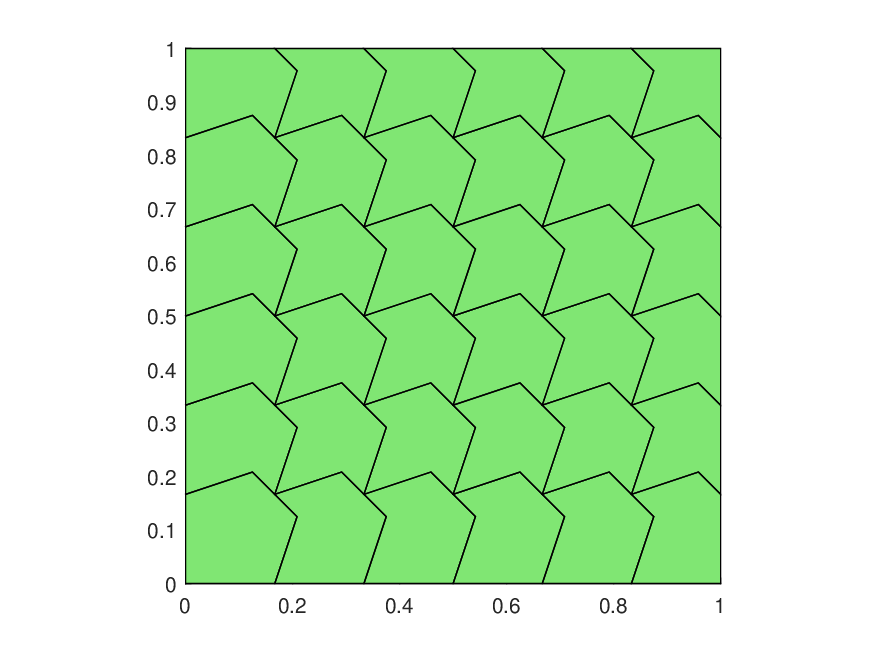}}
			\caption{Concave Mesh}\label{fig.concave}
		\end{minipage}
		\begin{minipage}[b]{0.35\linewidth}
			{\includegraphics[width=5cm]{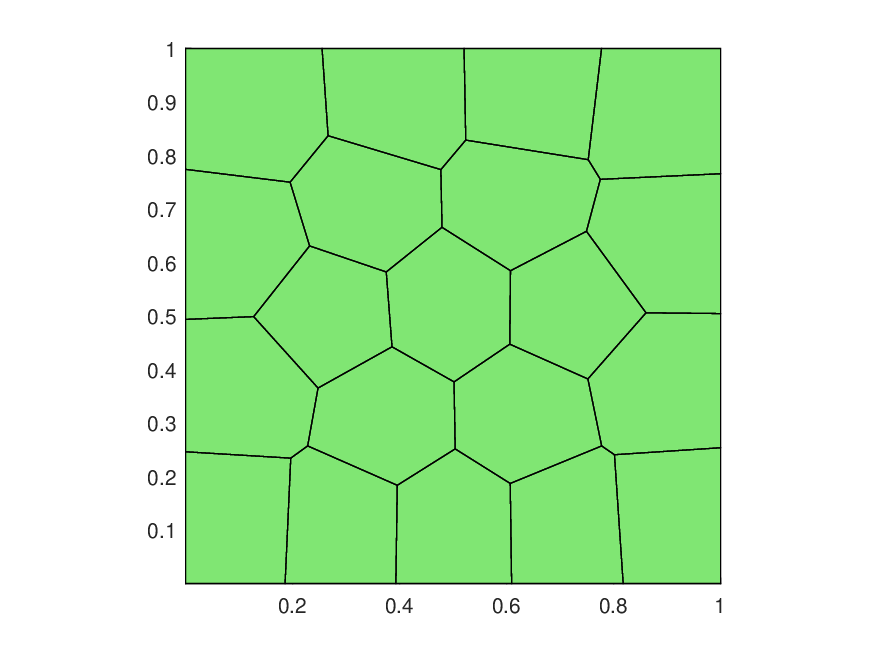}}
			\caption{Structured Voronoi Mesh}\label{fig.SV}
		\end{minipage}
		\begin{minipage}[b]{0.35\linewidth}
			{\includegraphics[width=5cm]{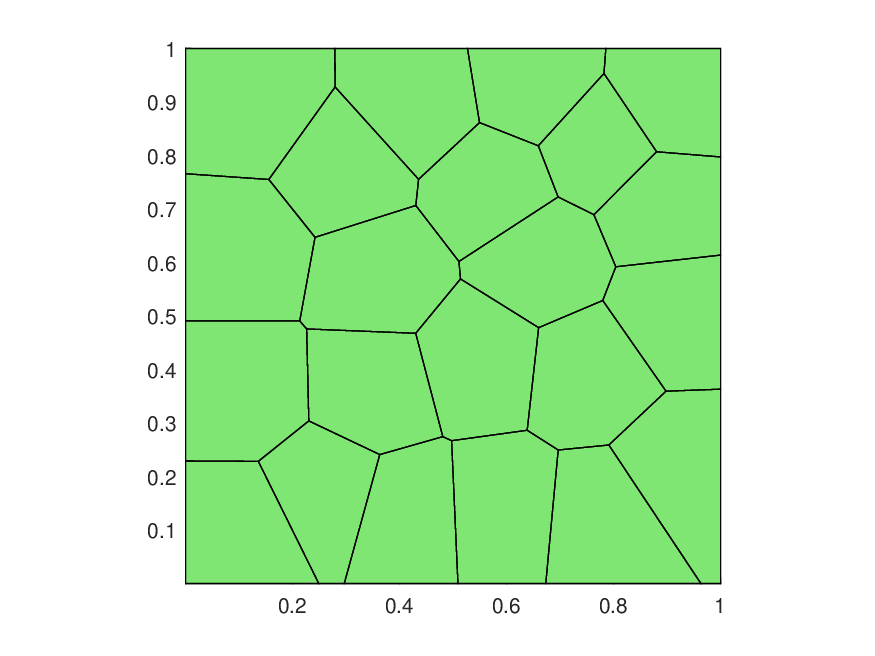}}
			\caption{Random Voronoi Mesh}\label{fig.RV}
		\end{minipage}
		%\caption{Different type of meshes}\label{fig.mesh}
	\end{center}
\end{figure}

%\medskip

\begin{table}[h!!]
	\caption{\small{Convergence results, Example 1, Square mesh}}
	{\small{\footnotesize					\begin{center}
				\begin{tabular}{|c|c ||c
						|c||c | c ||c|c|}%c|c|}
				\hline
				$h$&$\tau$&$\err(\bu)$ & Order  & $\err(p)$ & Order  &$\err(c)$ & Order  \\ 
				\hline%\\[-9pt]  &&\\[-9pt]
	0.353553 & 0.020000 & 0.592054 & - & 0.447413 & -& 0.422921 & - \\
	0.176777 & 0.010000 & 0.261412 & 1.1794 & 0.226153 & 0.9843 & 0.134148 & 1.6566 \\
	0.088388 & 0.005000 & 0.118591 & 1.1403 & 0.113819 & 0.9906 & 0.065810 & 1.0274 \\
	0.044194 & 0.002500 & 0.058576 & 1.0176 & 0.057017 & 0.9973 & 0.032780 & 1.0055 \\
	0.022097 & 0.001250 & 0.029607 & 0.9844 & 0.028522 & 0.9993 & 0.016387 & 1.0002 \\			
				\hline				
			\end{tabular}
	\end{center}}					
}\label{table.eg1.Square}	
\end{table}

\begin{table}[h!!]
	\caption{\small{Convergence results, Example 1, Triangular mesh}}
	{\small{\footnotesize					\begin{center}
				\begin{tabular}{|c|c ||c
						|c||c | c ||c|c|}%c|c|}
				\hline
				$h$&$\tau$&$\err(\bu)$ & Order  & $\err(p)$ & Order  &$\err(c)$ & Order  \\ 
				\hline%\\[-9pt]  &&\\[-9pt]
%1.414214 & 0.020000 & 1.000000 & 0.000000 & 0.981029 & 0.000000 & 1.434892 & 0.000000
0.707107 & 0.010000 & 0.758006 & - & 0.833294 & - & 1.844811 & -\\
0.353553 & 0.005000 & 0.491501 & 0.6250 & 0.369730 & 1.1724 & 0.436673 & 2.0788\\
0.176777 & 0.002500 & 0.263897 & 0.8972 & 0.185239 & 0.9971 & 0.103642 & 2.0749\\
0.088388 & 0.001250 & 0.134321 & 0.9743 & 0.093007 & 0.9940 & 0.028068 & 1.8846\\
0.044194 & 0.000625 & 0.067481 & 0.9931 & 0.046563 & 0.9981 & 0.010140 & 1.4689\\
0.022097&0.000313&0.033787&0.9980&0.023290&	 0.9995& 0.004443& 1.1904\\		
%loadGeometry x
%%1.000000 & 0.020000 & 1.000000 & - & 0.999378 & - & 2.538337 & -\\
%0.500000 & 0.010000 & 0.533708 & - & 0.494426 & - & 0.810718 & - \\
%0.250000 & 0.005000 & 0.280308 & 0.9290 & 0.253711 & 0.9626 & 0.237061 & 1.7739 \\
%0.125000 & 0.002500 & 0.142475 & 0.9763 & 0.128151 & 0.9853 & 0.065351 & 1.8590 \\
%0.062500 & 0.001250 & 0.071683 & 0.9910 & 0.064255 & 0.9960 & 0.021937 & 1.5748 \\	
%0.031250&0.000625&0.035936&0.9962&0.032150&0.9990&  0.009152& 1.2612\\	
	\hline				
			\end{tabular}
	\end{center}}					
}\label{table.eg1.Triangle}	
\end{table}

\begin{table}[h!!]
\caption{\small{Convergence results, Example 1, Concave mesh}}
{\small{\footnotesize					\begin{center}
		\begin{tabular}{|c|c ||c
				|c||c | c ||c|c|}%c|c|}
		\hline
		$h$&$\tau$&$\err(\bu)$ & Order  & $\err(p)$ & Order  &$\err(c)$ & Order  \\ 
		\hline%\\[-9pt]  &&\\[-9pt]
0.485913 & 0.020000 & 0.832388 & - & 0.629846 & - & 0.953857 &-\\
0.242956 & 0.010000 & 0.433720 & 0.9405 & 0.311890 & 1.0140 & 0.156015 & 2.6121\\
0.121478 & 0.005000 & 0.184488 & 1.2332 & 0.156376 & 0.9960& 0.069476 & 1.1671\\
0.060739 & 0.002500 & 0.082744 & 1.1568 & 0.078336 & 0.9973 & 0.033228 & 1.0641\\
0.030370 & 0.001250 & 0.040334 & 1.0367 & 0.039195 & 0.9990 & 0.016208 & 1.0357\\
			\hline				
	\end{tabular}
	\end{center}}					
}\label{table.eg1.Concave}	
\end{table}

\begin{table}[h!!]
\caption{\small{Convergence results, Example 1, Structured Voronoi mesh}}
{\small{\footnotesize					\begin{center}
	\begin{tabular}{|c|c ||c
			|c||c | c ||c|c|}%c|c|}
	\hline
	$h$&$\tau$&$\err(\bu)$ & Order  & $\err(p)$ & Order  &$\err(c)$ & Order  \\ 
	\hline%\\[-9pt]  &&\\[-9pt]
	0.707107 & 0.020000 & 1.000000 & -& 0.999985 & - & 2.680426 & - \\
	0.340697 & 0.010000 & 0.576136 & 0.7552 & 0.395946 & 1.2688 & 0.230344 & 3.3610 \\
	0.171923 & 0.005000 & 0.261674 & 1.1540 & 0.206366 & 0.9527 & 0.069771 & 1.7463 \\
	0.083555 & 0.002500 & 0.114142 & 1.1498 & 0.104156 & 0.9476 & 0.034435 & 0.9787 \\
	0.047445 & 0.001250 & 0.059287 & 1.1575 & 0.056754 & 1.0728 & 0.016668 & 1.2821 \\
	\hline				
\end{tabular}
\end{center}}					
}\label{table.eg1.SV}	
\end{table}

\begin{table}[h!!]
\caption{\small{Convergence results, Example 1, Random Voronoi mesh}}
{\small{\footnotesize					\begin{center}
\begin{tabular}{|c|c ||c
		|c||c | c ||c|c|}%c|c|}
\hline
$h$&$\tau$&$\err(\bu)$ & Order  & $\err(p)$ & Order  &$\err(c)$ & Order  \\ 
\hline%\\[-9pt]  &&\\[-9pt]
   0.736793 & 0.020000 & 1.000070 & - & 0.998901 & - & 2.675413 & - \\
0.373676 & 0.010000 & 0.559934 & 0.8543 & 0.412095 & 1.3041 & 0.320976 & 3.1233 \\
0.174941 & 0.005000 & 0.230365 & 1.1703 & 0.187065 & 1.0407 & 0.072242 & 1.9650 \\
0.089478 & 0.002500 & 0.097404 & 1.2839 & 0.086810 & 1.1451 & 0.034179 & 1.1163 \\
0.041643 & 0.001250 & 0.040472 & 1.1482 & 0.037883 & 1.0841 & 0.016312 & 0.9671 \\
\hline				
\end{tabular}
\end{center}}					
}\label{table.eg1.RV}	
\end{table}
\noindent Table~\ref{table.eg1.Triangle}-\ref{table.eg1.RV} show  errors and orders of convergence for the velocity $\bu$, pressure $p$, and concentration $c$ for the aforementioned five types of meshes. Observe that linear order of convergences are obtained for these variables in $L^2$ norm. These numerical order of convergence clearly matches the expected order of convergence given in Theorem~\ref{thm.main} with $k=0$.

\subsection{Example 2}\label{sec.example2}
In this example, we examine a more realistic test \cite{compressiblemiscible_Zhang_2020} aiming to provide a qualitative comparison with the anticipated benchmark solution found in existing literature. Here, consider \eqref{eqn.model} with \eqref{eqn.bc} and \eqref{eqn.ic}, where the
spatial domain is $\Omega=(0,1000)\times (0,1000)$ ft$^2$, the time period is $[0,T]=[0,3600]$ days,
and the viscosity of the oil is $\mu(0)=1.$ cp. The injection well is situated in the upper-right corner of the domain at $(1000, 1000)$, with an injection rate of $q = 30 $ ft$^2/$day and an injection concentration of $\hc = 1.0.$ Meanwhile, the production well is positioned at the lower-left corner at $(0, 0)$. The initial concentration across the domain is given by $c_0(x,y) = 0$. The parameters are chosen as $\phi=0.1, d_m=1, d_\ell=0, d_t=0,$ and $a(c)=k(\bx)(1+(M^{1/4}-1)c)^4$, where $k(\bx)$, $M$ and $d_m$ are mentioned below for each of the four tests. Also, the compressibility of the medium $c_\phi= 0.000001$, reference pressure $p_r = 1$ atm, and initial pressure $p_0 = 3000$ psia respectively. The schemes were tested on a triangular mesh with 512 elements and on a $32 \times 32$ square mesh, both using a time step size of $\tau=36$ days. 

\smallskip

\subsubsection*{Test 1} Assume that the permeability is $k(\bx)=1000$, and mobility ratio between the resident and injected fluid is $M = 1$. Figure~\ref{fig.test1} displays the surface and contour plots of the concentration at $t=3$ years (1080 days) and $t=10$ years (3600 days) for triangular and square meshes. Since $M=1$, $a(c)$ is constant and this implies that the fluid has a constant viscosity $\mu(c)=\mu(0)$, Figure~\ref{fig.test1} shows that the contour plots for the concentration is circular until the injecting fluid reaches the production well at $t=3$ years. Because of the effect of the no-flow boundary conditions and the production well, the invading fluid is expected to move toward the production well faster along the
diagonal. This has been observed at $t=10$ years. %At $t=10$ years, when these plots are reached at production well, the injecting fluid continues to fill the whole domain until $c=1$.
	\begin{figure}[h!!]
	\begin{center}
		\begin{minipage}[b]{0.23\linewidth}
			{\includegraphics[width=4.5cm]{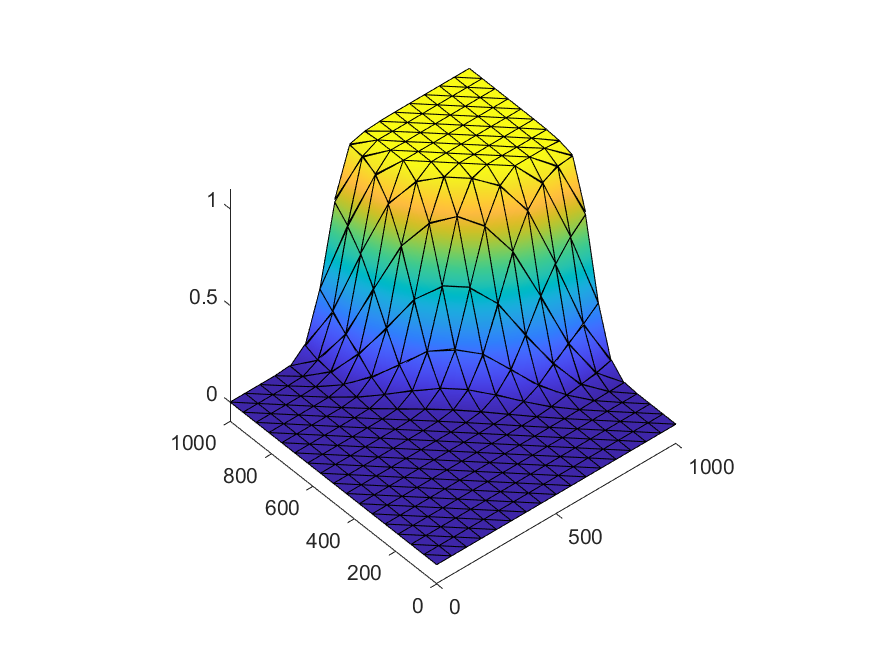}}
		\end{minipage}
		\begin{minipage}[b]{0.22\linewidth}
			{\includegraphics[width=4.1cm]{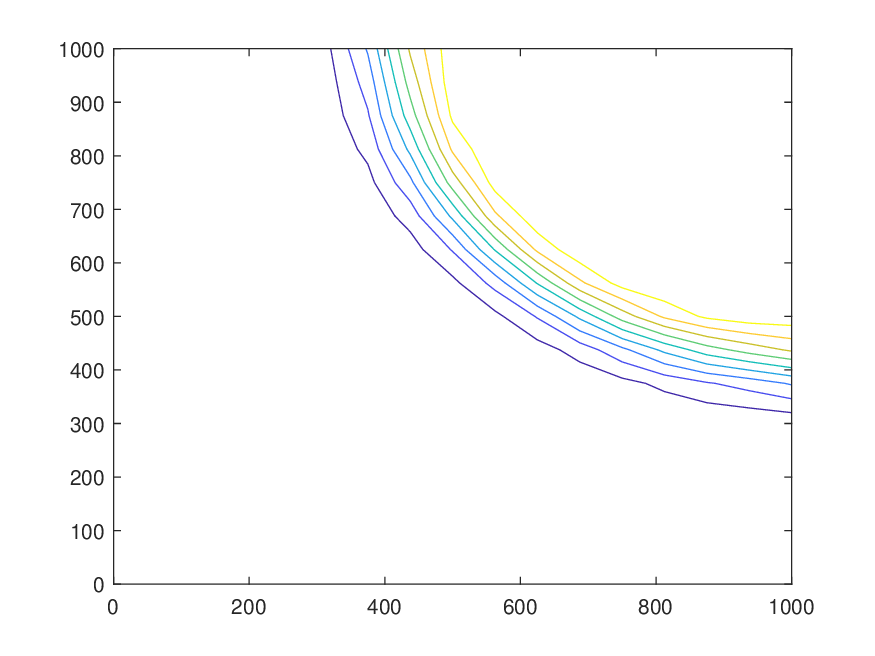}}
		\end{minipage}
		\begin{minipage}[b]{0.23\linewidth}
			{\includegraphics[width=4.5cm]{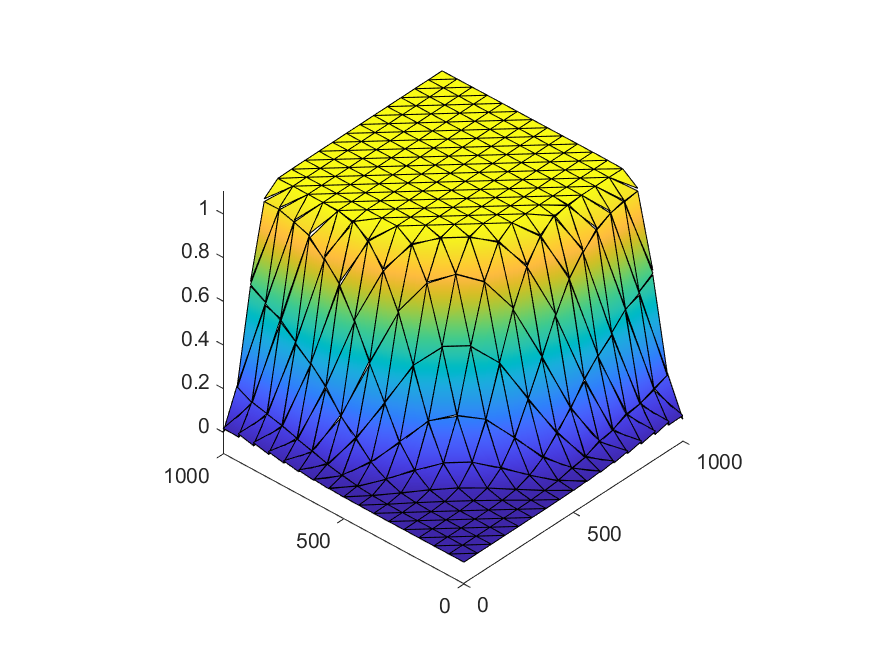}}
		\end{minipage}
		\begin{minipage}[b]{0.23\linewidth}
			{\includegraphics[width=4.2cm]{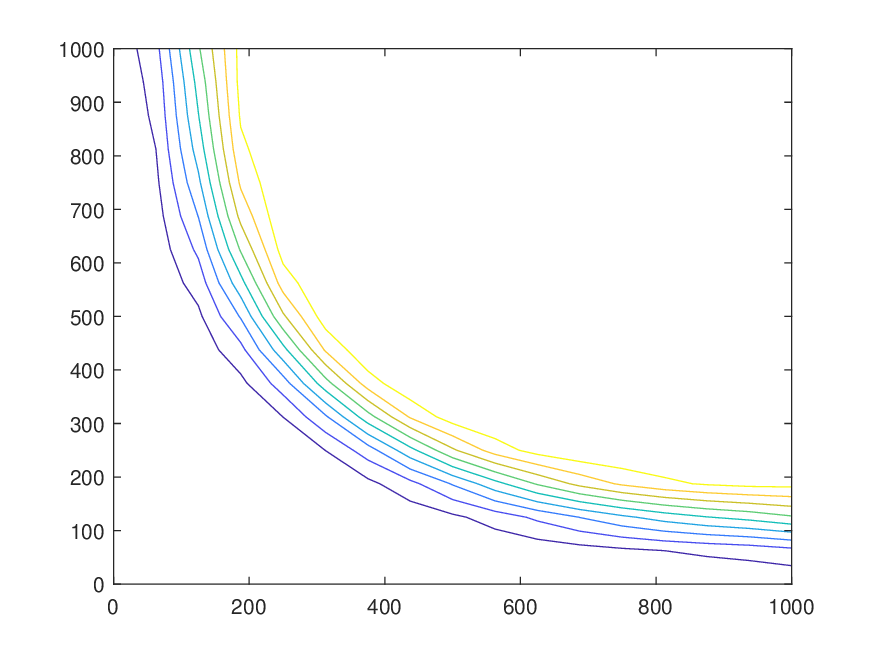}}
		\end{minipage}
		\begin{minipage}[b]{0.23\linewidth}
			{\includegraphics[width=4.5cm]{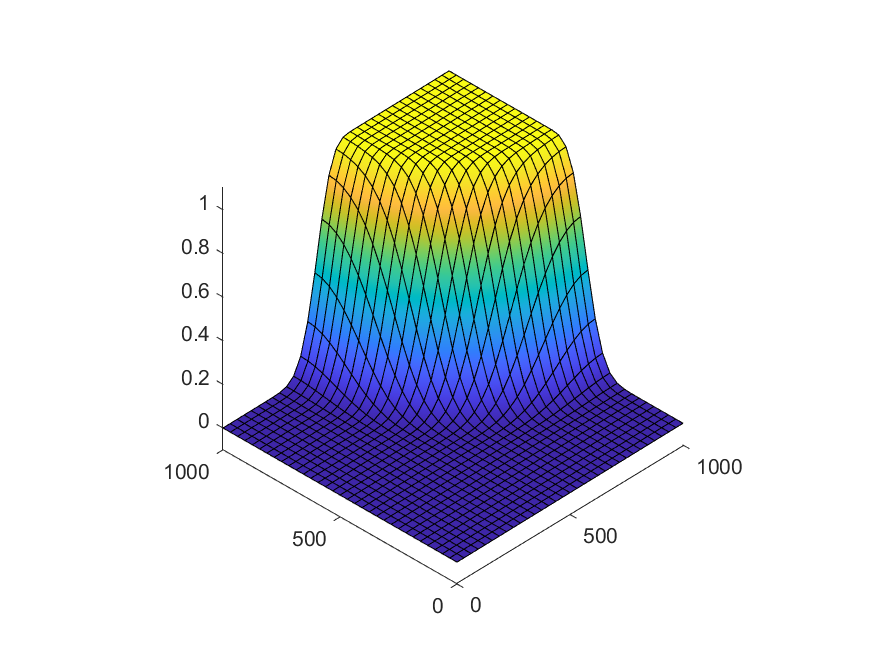}}
		\end{minipage}
		\begin{minipage}[b]{0.22\linewidth}
			{\includegraphics[width=4.1cm]{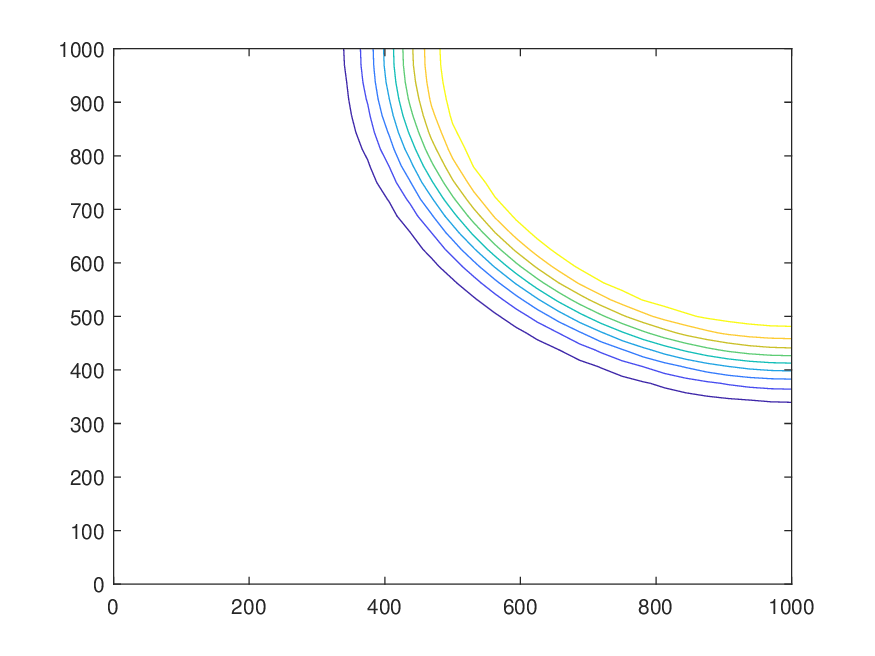}}
		\end{minipage}
		\begin{minipage}[b]{0.23\linewidth}
			{\includegraphics[width=4.5cm]{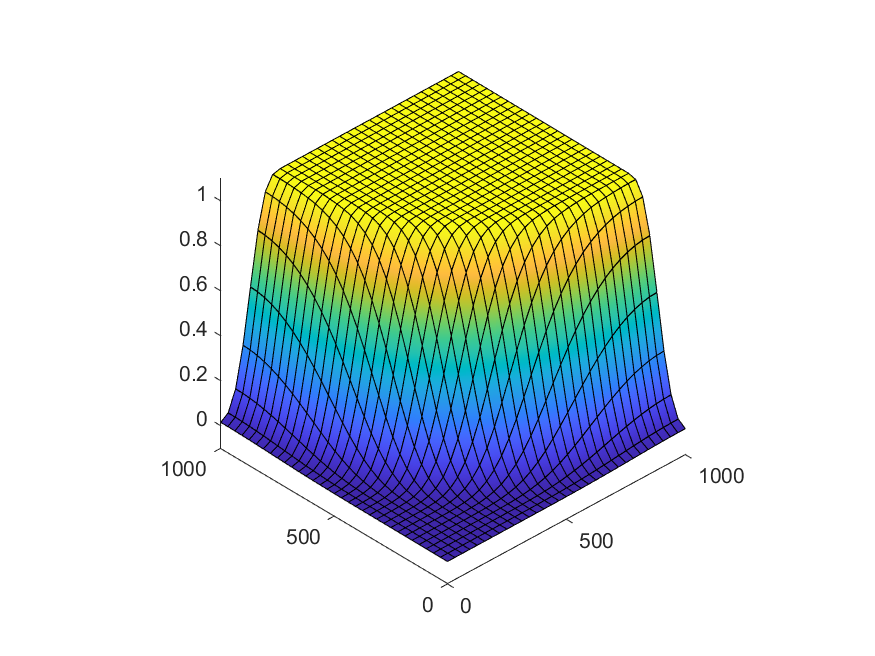}}
		\end{minipage}
		\begin{minipage}[b]{0.23\linewidth}
			{\includegraphics[width=4.2cm]{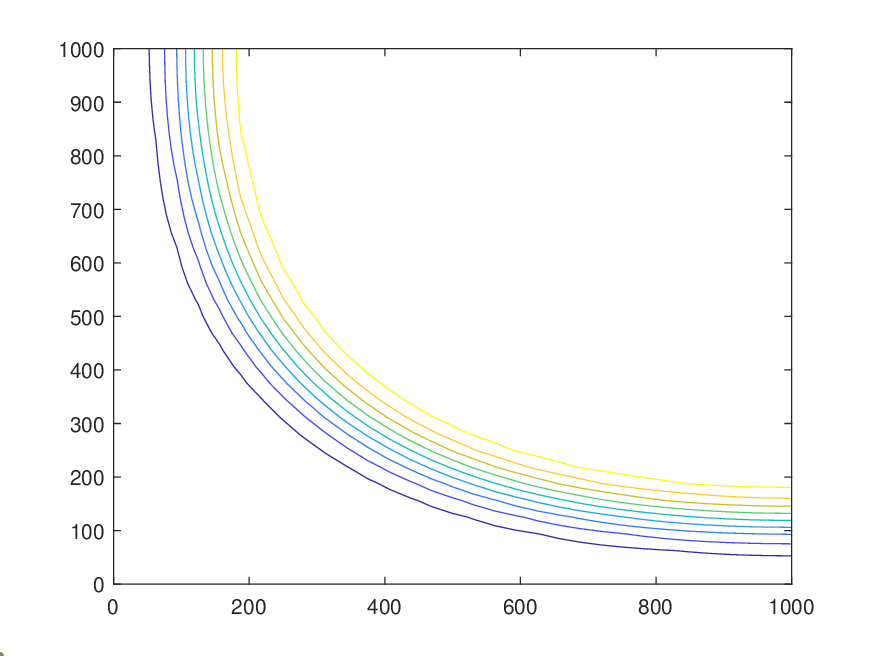}}
		\end{minipage}
		\caption{Surface and contour plots of the concentration in Test 1 at $t=3$ and $t=10$ years for triangular (top) and square (bottom) meshes}\label{fig.test1}
	\end{center}
\end{figure}
\subsubsection*{Test 2} In this test, we take $k(\bx)=1000$ and $M = 41$. The surface and contour plots of the concentration at $t=3$ and $t=10$ years are presented in Figure~\ref{fig.test2}. The viscosity $\mu(c)$ here depends on the concentration $c$ unlike test 1 due to the choice of $M$. As a result, we cannot anticipate the presence of a series of concentric circle, as illustrated in the figure.
	\begin{figure}[h!!]
	\begin{center}
		\begin{minipage}[b]{0.23\linewidth}
			{\includegraphics[width=4.5cm]{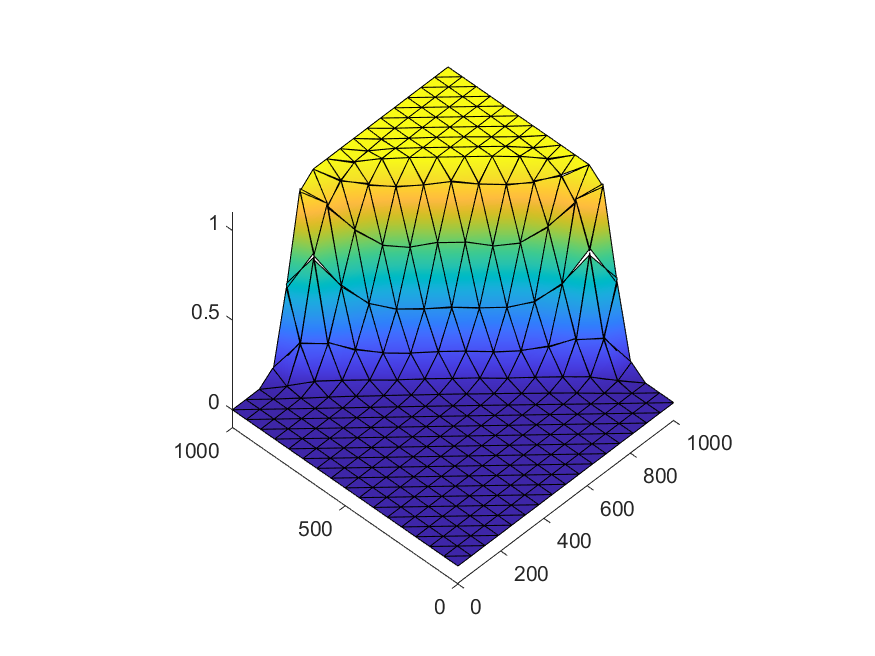}}
		\end{minipage}
		\begin{minipage}[b]{0.22\linewidth}
			{\includegraphics[width=4.1cm]{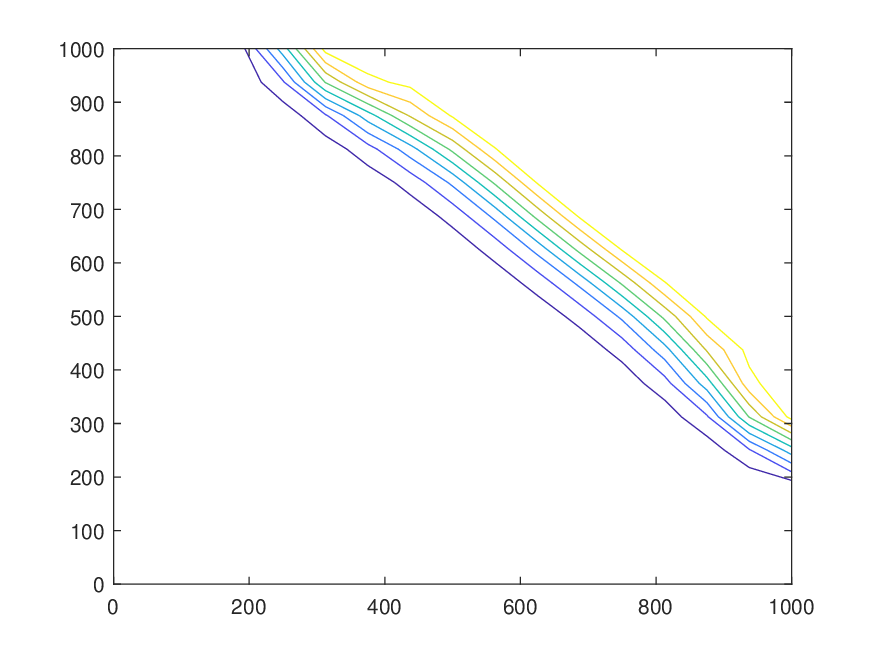}}
		\end{minipage}
		\begin{minipage}[b]{0.23\linewidth}
			{\includegraphics[width=4.5cm]{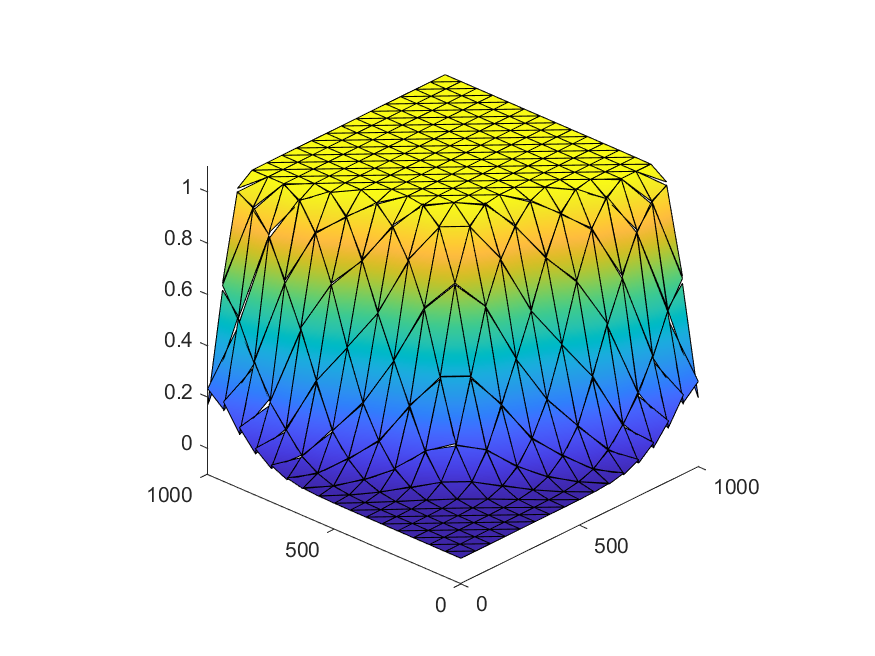}}
		\end{minipage}
		\begin{minipage}[b]{0.23\linewidth}
			{\includegraphics[width=4.2cm]{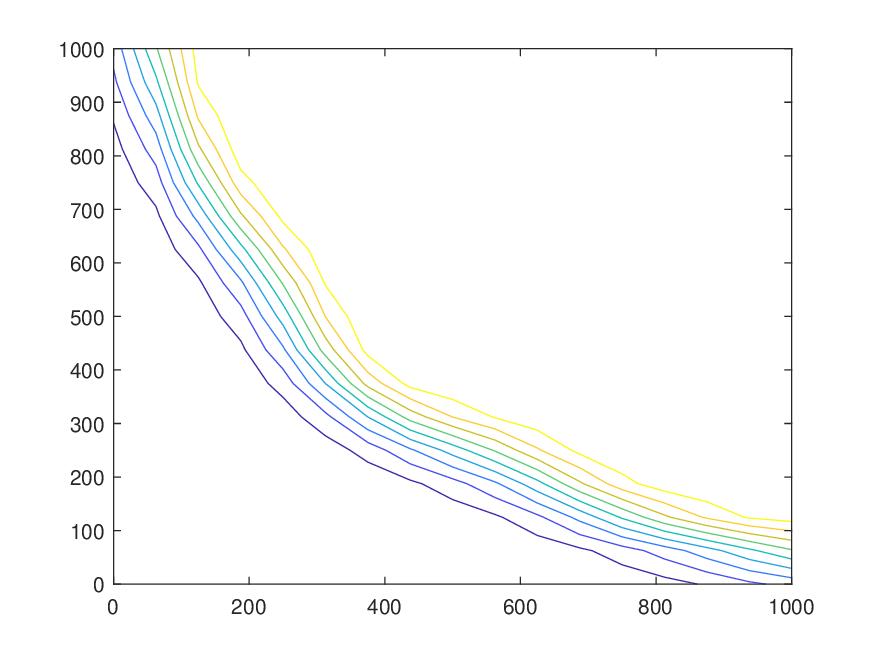}}
		\end{minipage}
		\begin{minipage}[b]{0.23\linewidth}
			{\includegraphics[width=4.5cm]{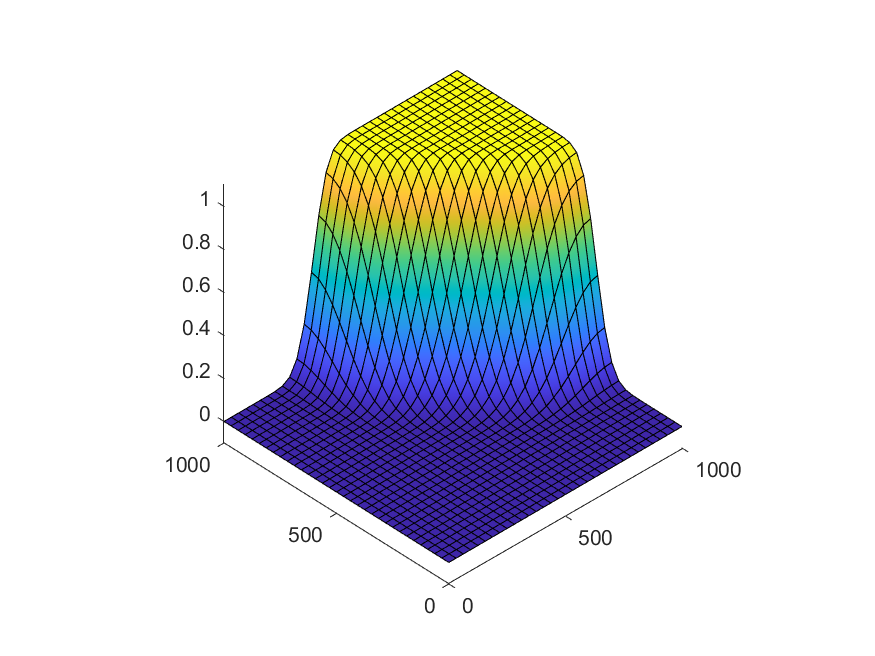}}
		\end{minipage}
		\begin{minipage}[b]{0.22\linewidth}
			{\includegraphics[width=4.1cm]{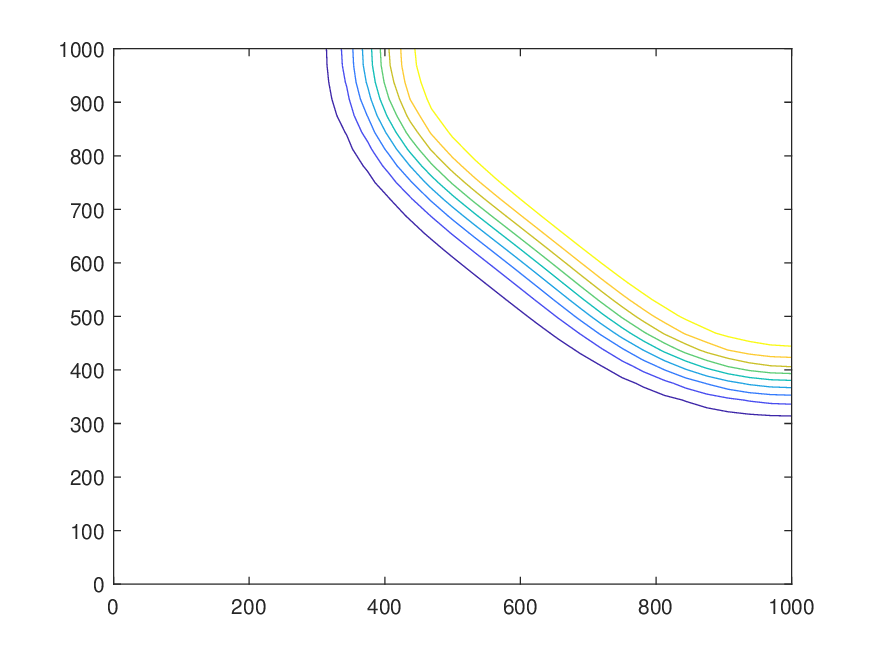}}
		\end{minipage}
		\begin{minipage}[b]{0.23\linewidth}
			{\includegraphics[width=4.5cm]{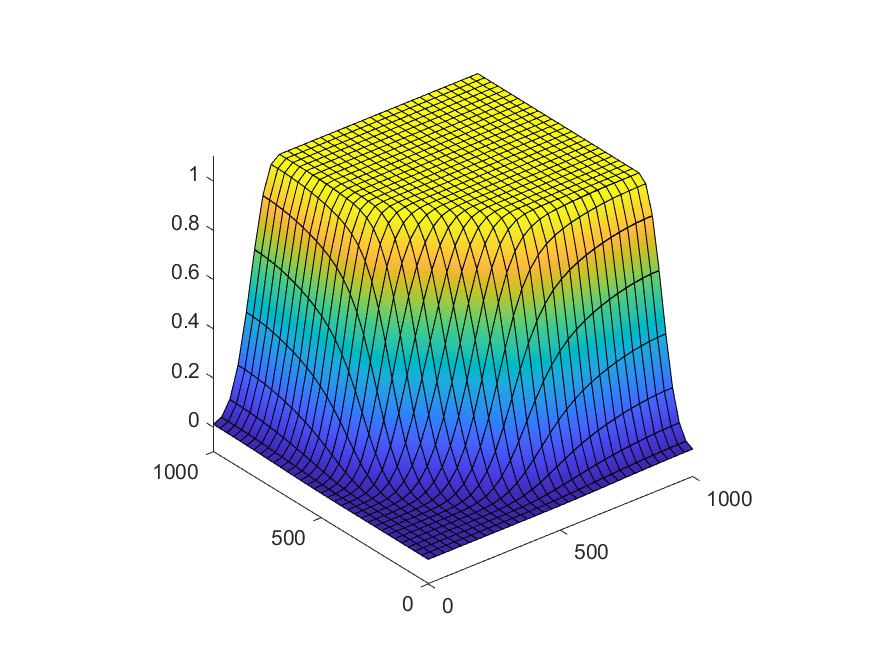}}
		\end{minipage}
		\begin{minipage}[b]{0.23\linewidth}
			{\includegraphics[width=4.2cm]{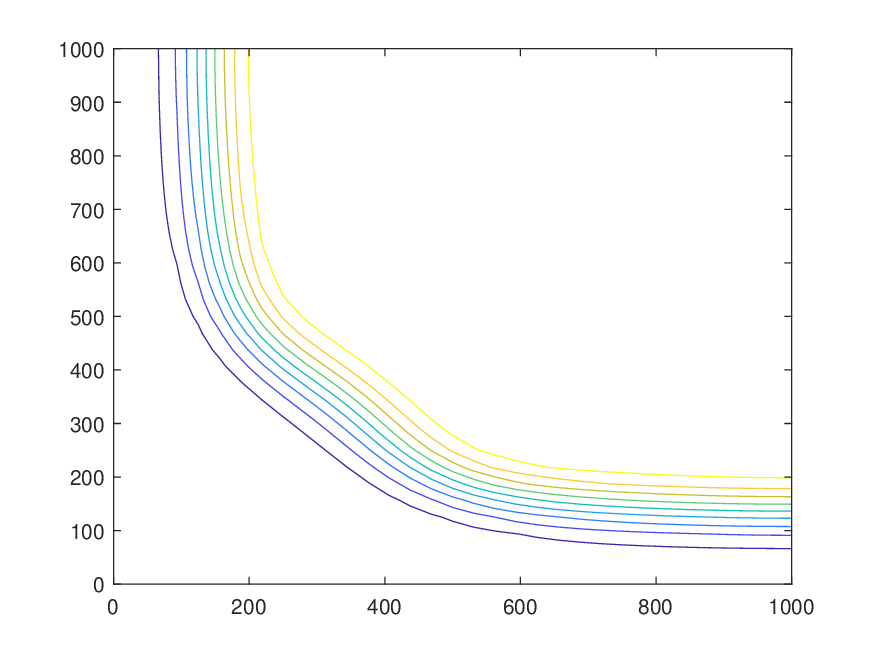}}
		\end{minipage}
		\caption{Surface and contour plots of the concentration in Test 2 at $t=3$ and $t=10$ years for triangular (top) and square (bottom) meshes}\label{fig.test2}
	\end{center}
\end{figure}
\subsubsection*{Test 3} Here, we consider the numerical simulation of a miscible displacement problem with discontinuous permeability, which is commonly encountered in many field applications.. The data is same as given in Test 1 except the permeability
of the medium $k(\bx)$. Let $k(\bx)=1000$ on the sub-domain $\Omega_L:=(0,1000) \times (0,500)$ and$k(\bx)=400$ on the sub-domain $\Omega_U:=(0,1000) \times (500,1000)$. The contour and surface plot at $t = 3$ and $t = 10$ years are given in Figure \ref{fig.test3}. 
	\begin{figure}[h!!]
	\begin{center}
		\begin{minipage}[b]{0.23\linewidth}
			{\includegraphics[width=4.5cm]{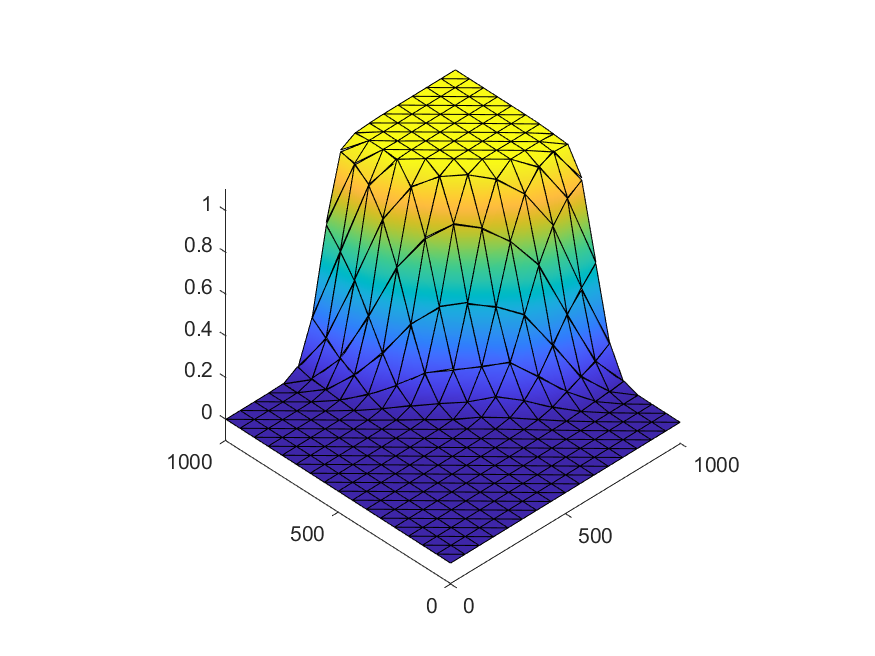}}
		\end{minipage}
		\begin{minipage}[b]{0.22\linewidth}
			{\includegraphics[width=4.1cm]{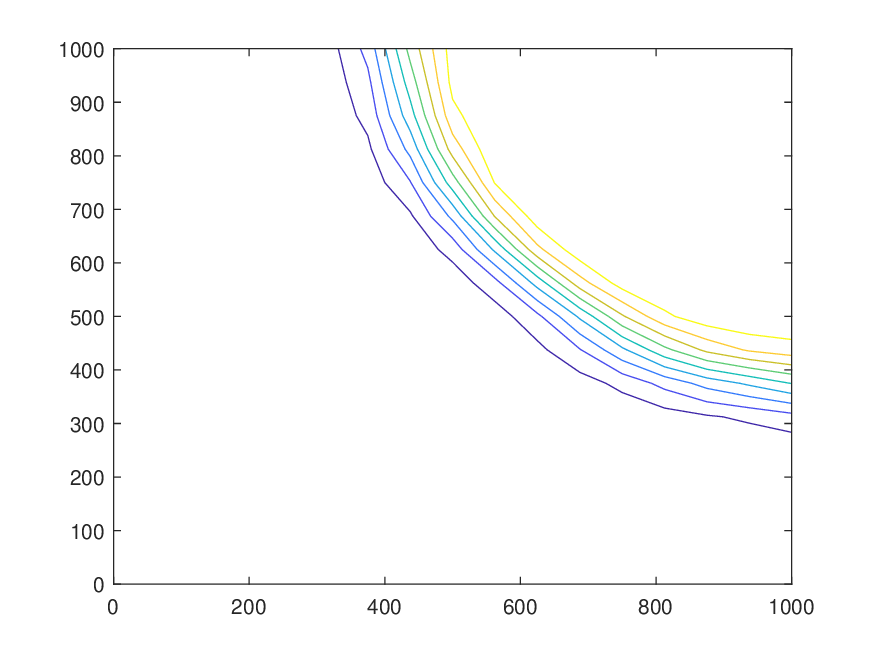}}
		\end{minipage}
		\begin{minipage}[b]{0.23\linewidth}
			{\includegraphics[width=4.5cm]{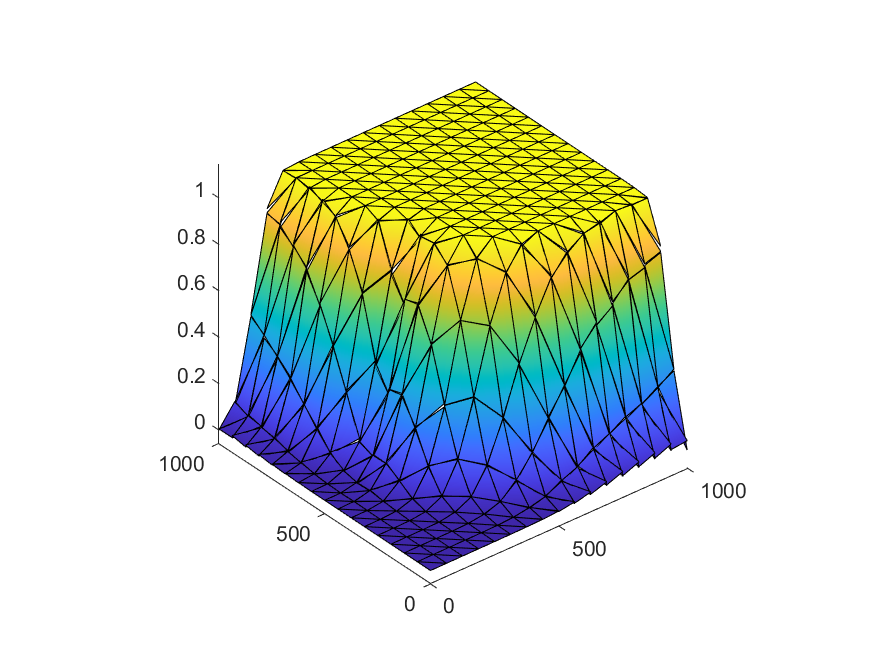}}
		\end{minipage}
		\begin{minipage}[b]{0.23\linewidth}
			{\includegraphics[width=4.2cm]{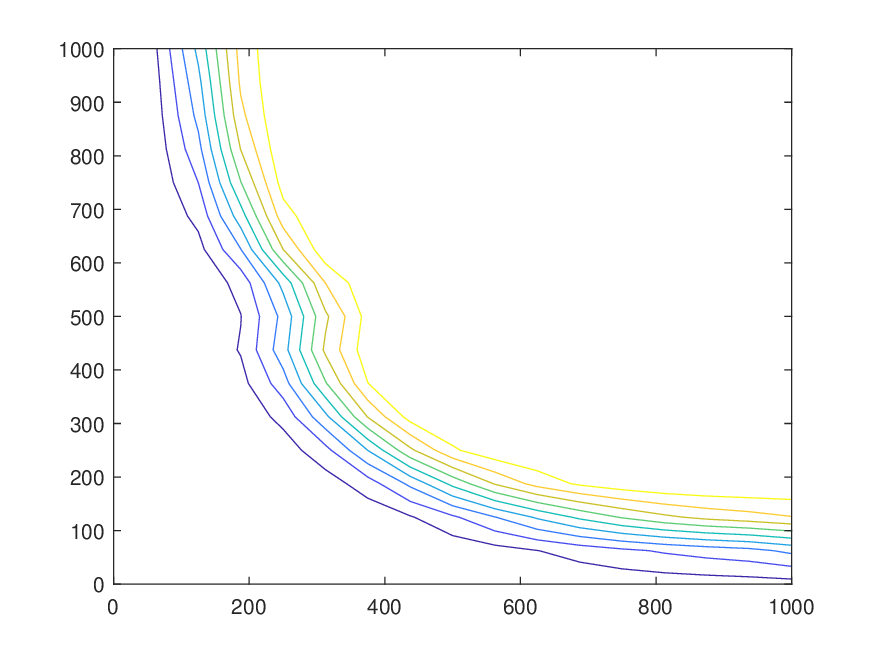}}
		\end{minipage}
		\begin{minipage}[b]{0.23\linewidth}
			{\includegraphics[width=4.5cm]{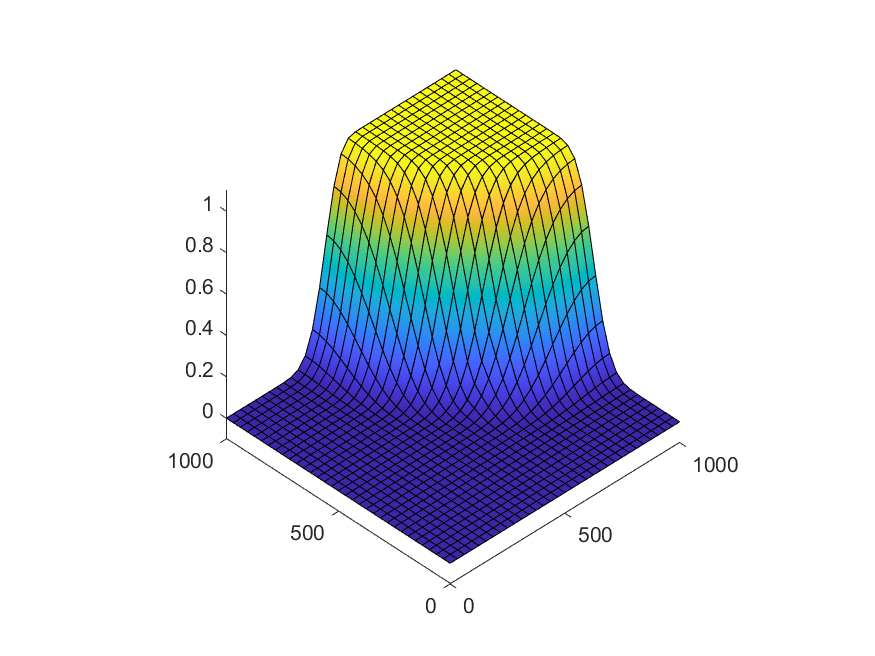}}
		\end{minipage}
		\begin{minipage}[b]{0.22\linewidth}
			{\includegraphics[width=4.1cm]{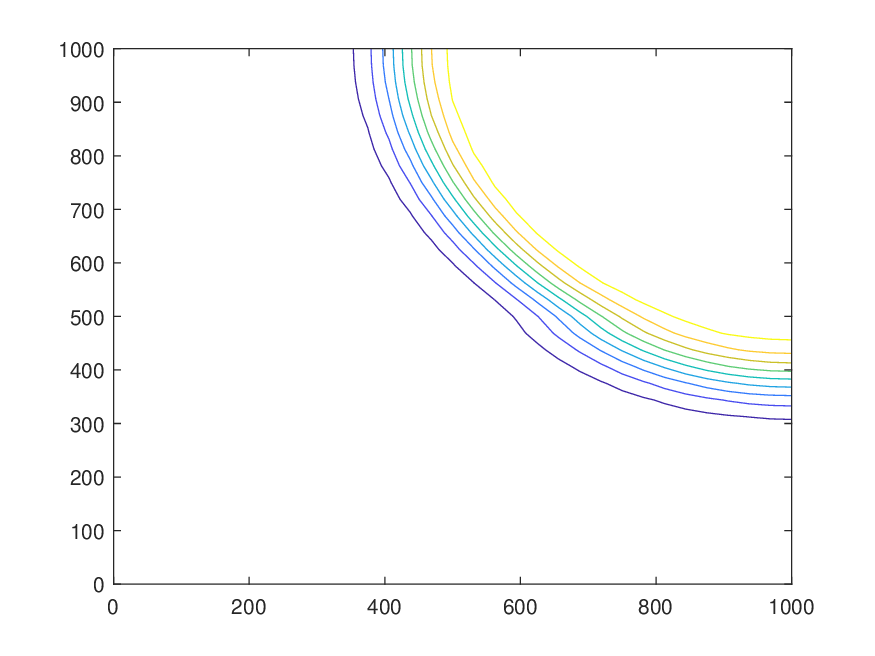}}
		\end{minipage}
		\begin{minipage}[b]{0.23\linewidth}
			{\includegraphics[width=4.5cm]{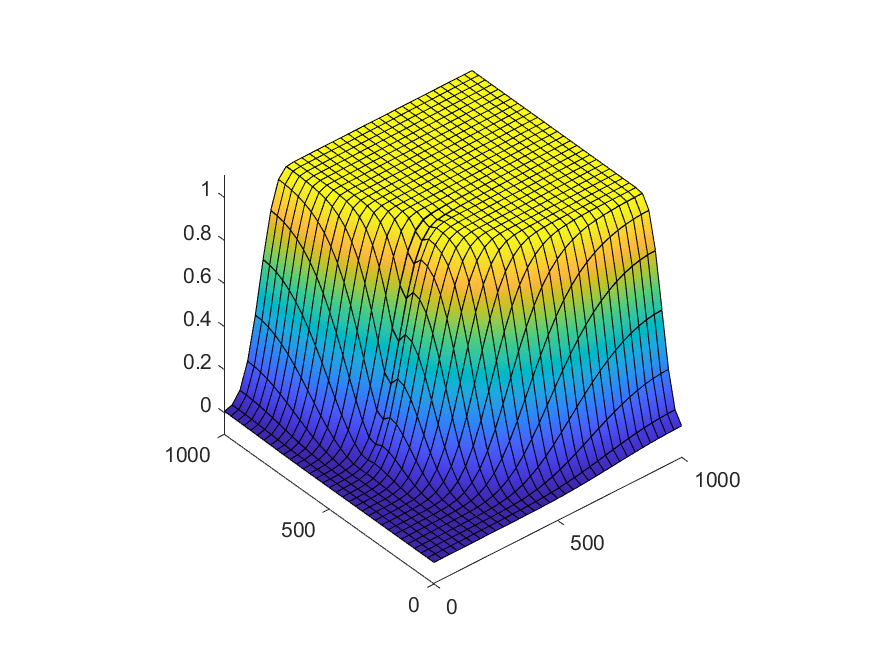}}
		\end{minipage}
		\begin{minipage}[b]{0.23\linewidth}
			{\includegraphics[width=4.2cm]{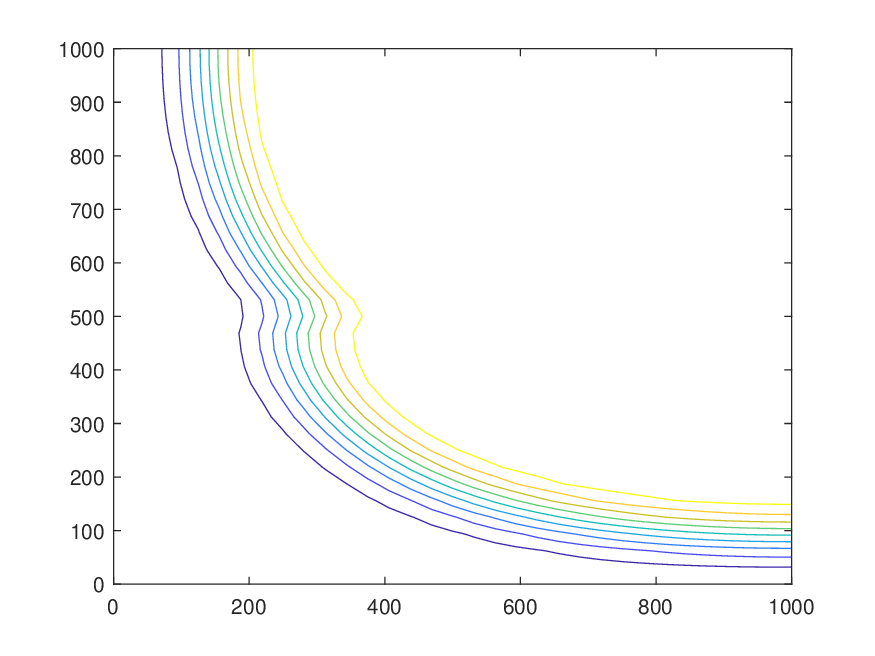}}
		\end{minipage}
		\caption{Surface and contour plots of the concentration in Test 3 at $t=3$ and $t=10$ years for triangular (top) and square (bottom) meshes}\label{fig.test3}
	\end{center}
\end{figure}
\subsubsection*{Test 4} Consider Test 2 with n$k(\bx)=1000$ on the sub-domain $\Omega_L$ and$k(\bx)=400$ on the sub-domain $\Omega_U$. The surface and contour plots of the concentration are depicted in Figure~\ref{fig.test4}. As seen in the figure, the concentration front initially moves faster in the vertical direction than in the horizontal direction. This discrepancy arises from the fact that the subdomain $\Omega_L$ possesses a greater permeability, resulting in a higher Darcy velocity than in the subdomain $\Omega_U$. Once the injecting fluid reaches $\Omega_L$, it starts to move much faster in the horizontal direction on $\Omega_L$ than on $\Omega_U$a due to the same reason.
	\begin{figure}[h!!]
	\begin{center}
		\begin{minipage}[b]{0.23\linewidth}
			{\includegraphics[width=4.5cm]{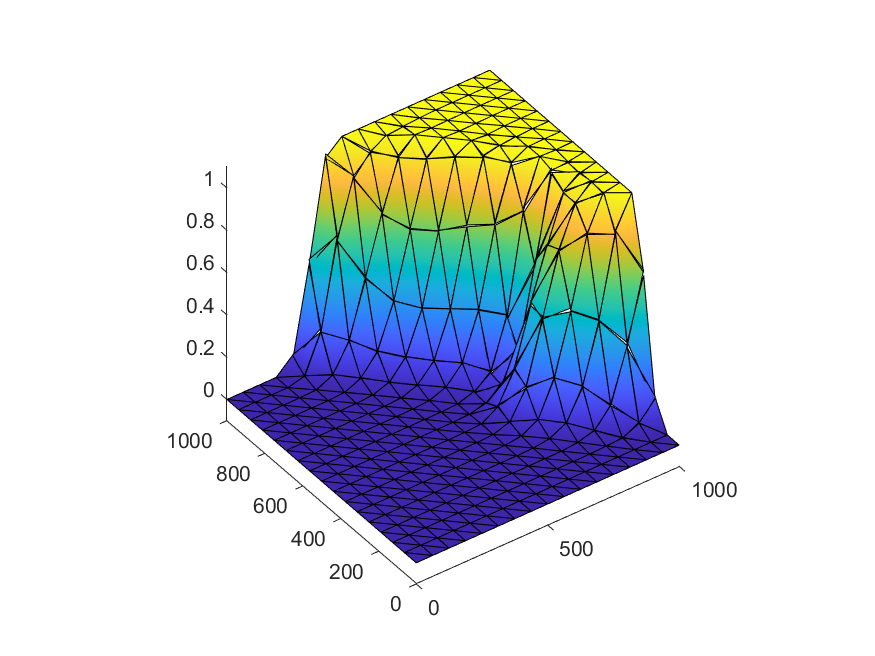}}
		\end{minipage}
		\begin{minipage}[b]{0.22\linewidth}
			{\includegraphics[width=4.1cm]{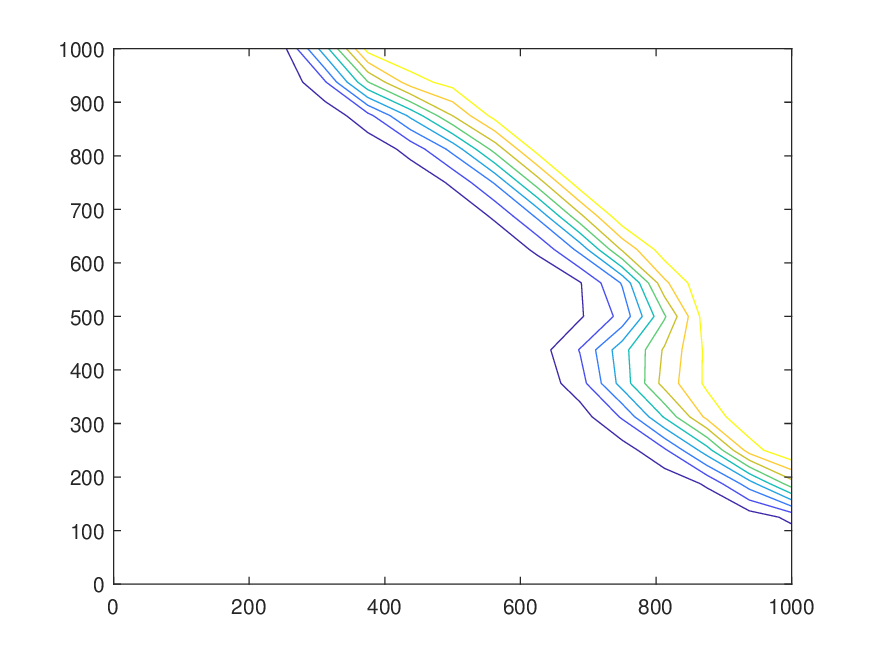}}
		\end{minipage}
		\begin{minipage}[b]{0.23\linewidth}
			{\includegraphics[width=4.5cm]{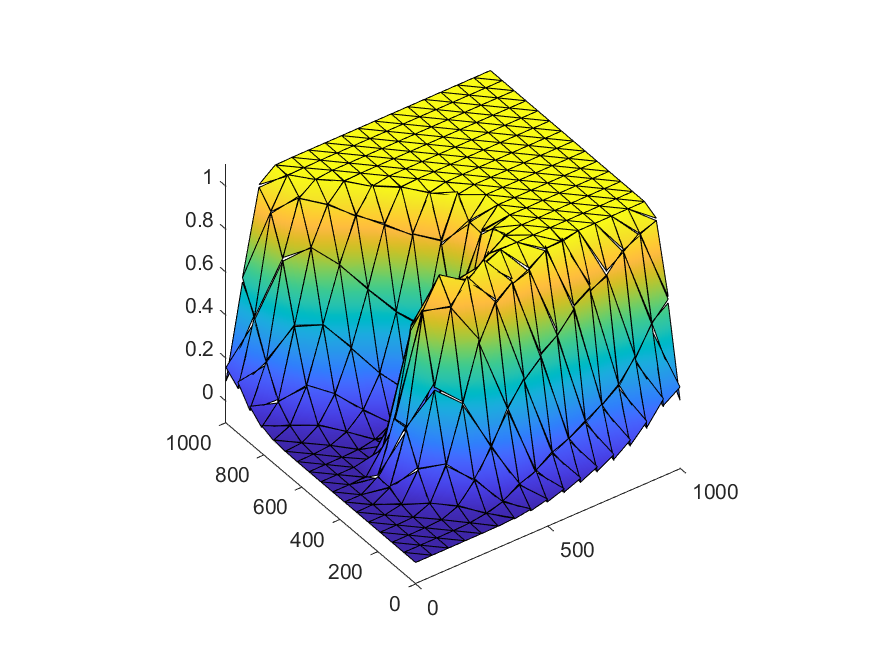}}
		\end{minipage}
		\begin{minipage}[b]{0.23\linewidth}
			{\includegraphics[width=4.2cm]{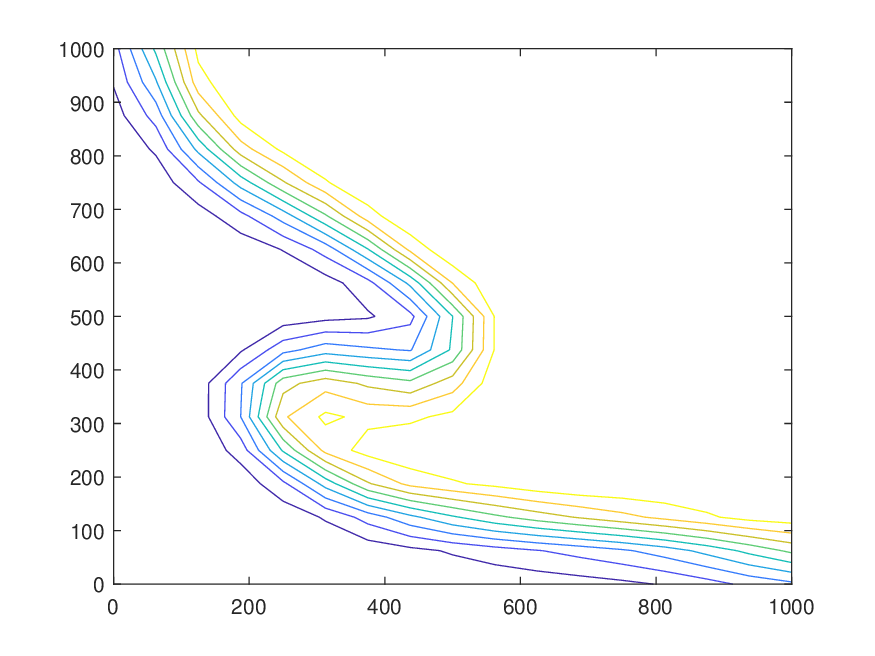}}
		\end{minipage}
		\begin{minipage}[b]{0.23\linewidth}
			{\includegraphics[width=4.5cm]{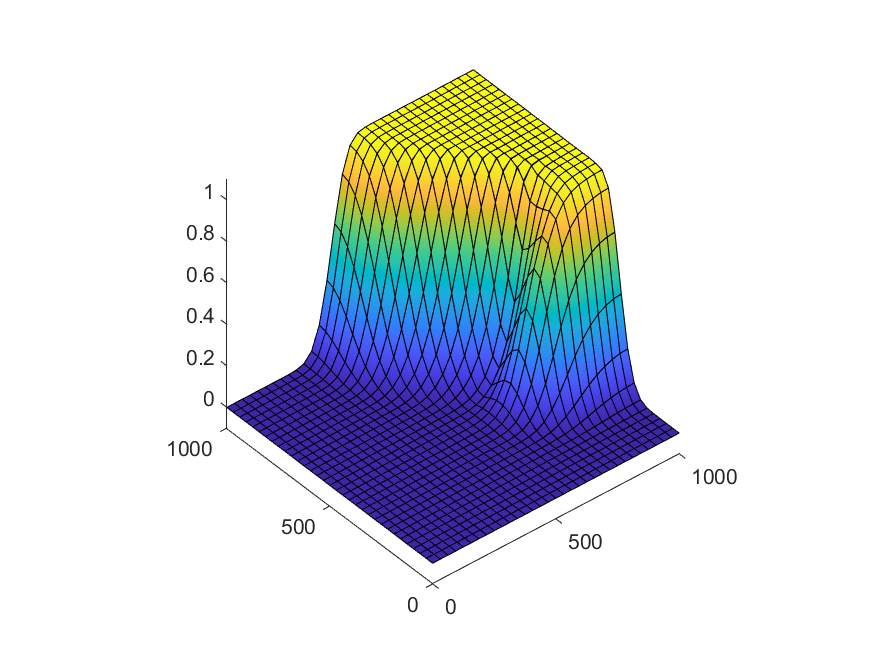}}
		\end{minipage}
		\begin{minipage}[b]{0.22\linewidth}
			{\includegraphics[width=4.1cm]{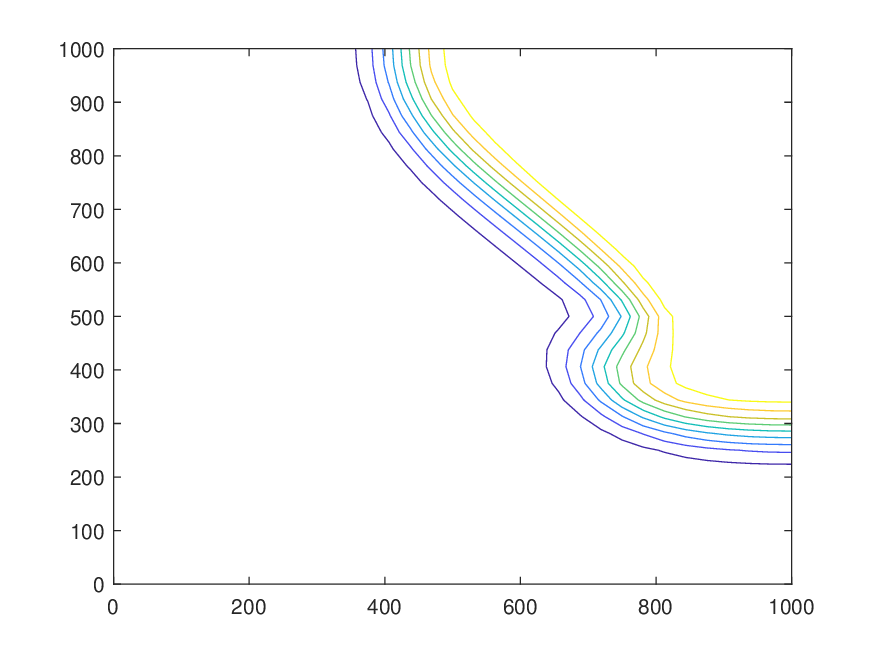}}
		\end{minipage}
		\begin{minipage}[b]{0.23\linewidth}
			{\includegraphics[width=4.5cm]{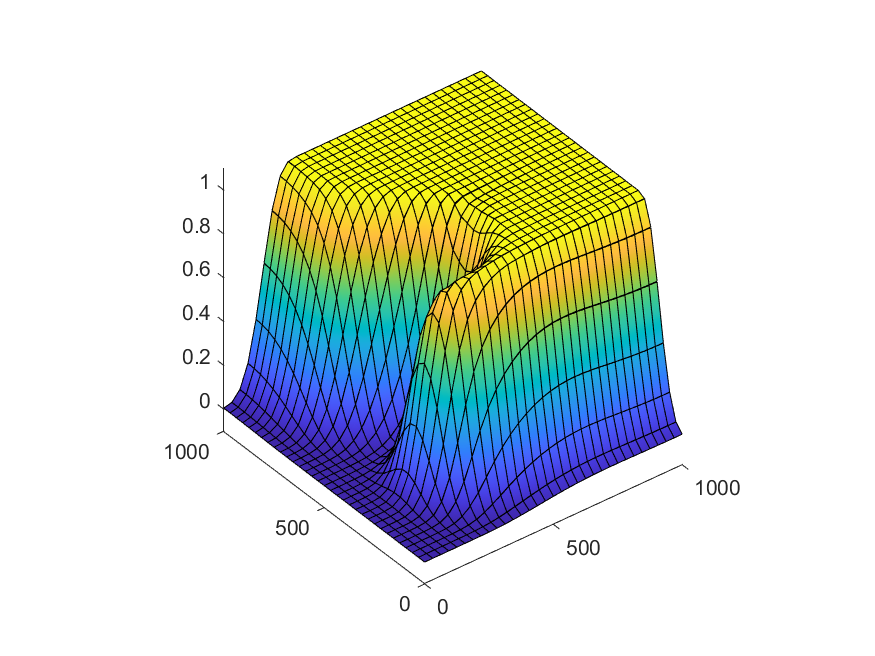}}
		\end{minipage}
		\begin{minipage}[b]{0.23\linewidth}
			{\includegraphics[width=4.2cm]{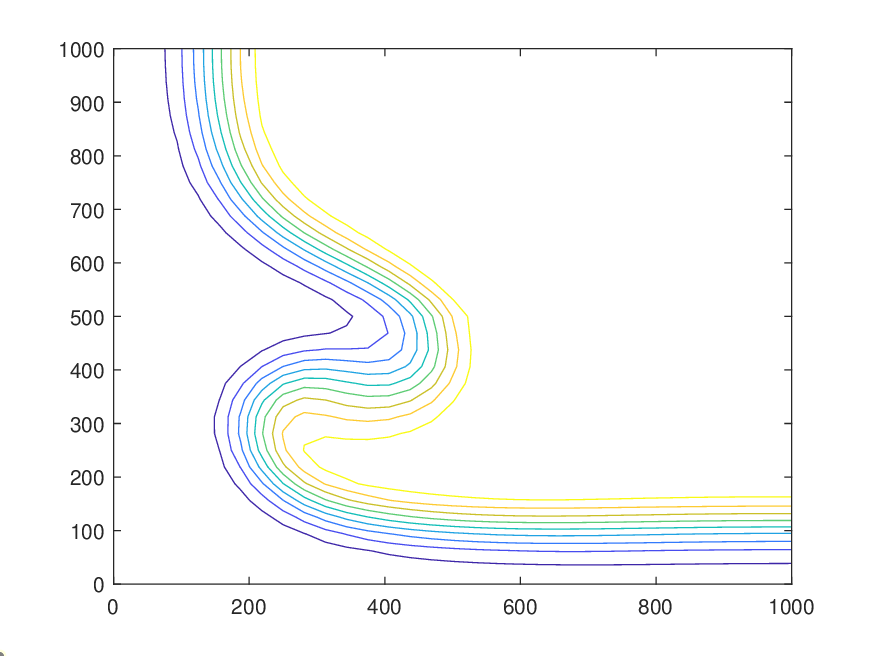}}
		\end{minipage}
		\caption{Surface and contour plots of the concentration in Test 4 at $t=3$ and $t=10$ years for triangular (top) and square (bottom) meshes}\label{fig.test4}
	\end{center}
\end{figure}
\medskip

\noindent {\bf{Acknowledgements.}} The first author thanks the Department of Science and Technology (DST-SERB), India, for supporting this work through the core research grant CRG/2021/002410.  The second author thanks Indian Institute of Space Science and Technology (IIST) for the financial support towards the research work.

\subsection*{Declarations}

\noindent {\bf Conflict of Interest.} The authors declare that they have no conflict of interest.

%%%%%%%%%%%%%%%%%%%%%%%%%%%%%%%%%%%%%%%%%%%%%%%%%%%%%%%%%%%%%%%%%%%%%%%%%%
\bibliographystyle{amsplain}
\bibliography{VEMBib}
\end{document}